\titlespacing*{\section}{0pt}{14pt}{4pt}
\titlespacing*{\subsection}{0pt}{8pt}{3pt}
\def\maketimestamp{\count255=\time
\divide\count255 by 60\relax
\edef\thetime{\the\count255:}%
\multiply\count255 by-60\relax
\advance\count255 by\time
\edef\thetime{\thetime\ifnum\count255<10 0\fi\the\count255}
\edef\thedate{\number\day-\ifcase\month\or Jan\or Feb\or Mar\or
             Apr\or May\or Jun\or Jul\or Aug\or Sep\or Oct\or
             Nov\or Dec\fi-\number\year}
\def\timstamp{\hbox to\hsize{\tt\hfil\thedate\hfil\thetime\hfil}}}
\numberwithin{equation}{section}  
\newtheorem{theorem}{Theorem}[section]
\newtheorem{lemma}[theorem]{Lemma}
\newtheorem{proposition}[theorem]{Proposition}
\newtheorem{corollary}[theorem]{Corollary}
\theoremstyle{definition}
\newtheorem{definition}[theorem]{Definition} 
\newtheorem{example}{Example}
\theoremstyle{remark}
\newtheorem{remark}{Remark}
\DeclareMathOperator{\Span}{span} %
\DeclareMathOperator{\supp}{supp} %
\DeclareMathOperator{\volumeet}{d\!} %
\DeclareMathOperator{\volumeto}{\tilde{d}\!} %
\DeclareMathOperator{\exponential}{e}
\newcommand{\sepgabor}[3][g]{\ensuremath{\set{E_{\gamma}T_{\lambda}
      {#1}}_{\lambda\in{#2},\gamma\in {#3}}}} 
\newcommand{\gabor}[2][\Delta]{\{\pi(\nu)#2\}_{\nu \in #1}}
\newcommand{\gaborG}[2][\Delta]{\mathscr{G}(#2,#1)} 
\newcommand{\myexp}[1]{\exponential^{#1}}
\newcommand{\id}[1][]{I_{#1}} 
\newcommand{\vol}[1]{\volumeet{(#1)}}
\newcommand{\volto}[1]{\volumeto{(#1)}}
\newcommand*{\numbersys}[1]{\ensuremath{\mathbb{#1}}}
\newcommand*{\C}{\numbersys{C}}
\newcommand*{\R}{\numbersys{R}}
\newcommand*{\Z}{\numbersys{Z}}
\newcommand*{\T}{\numbersys{T}}
\newcommand*{\cH}{\mathcal{H}}
\newcommand*{\cF}{\mathcal{F}}
\newcommand*{\cK}{\mathcal{K}}
\newcommand{\itvcc}[2]{\ensuremath{\left[{#1},{#2}\right]}} %
\newcommand{\itvco}[2]{\ensuremath{\left[{#1},{#2}\right)}} %
\newcommand{\abs}[1]{\ensuremath{\left\lvert#1\right\rvert}}
\newcommand{\norm}[2][]{\ensuremath{\left\lVert#2\right\rVert_{#1}}}
\newcommand{\innerprod}[3][]{\ensuremath{\left\langle #2,#3\right\rangle_{\! #1}}}
\newcommand{\innerprods}[2]{\ensuremath{\langle #1,#2\rangle}}
\newcommand{\set}[1]{\ensuremath{\left\lbrace{#1}\right\rbrace}}
\newcommand{\setprop}[2]{\ensuremath{\left\lbrace{#1} : {#2}\right\rbrace}}
\newcommand{\floor}[1]{\left\lfloor #1 \right\rfloor}
\newcommand{\lat}[1]{\ensuremath {#1}} 
\newcommand{\LG}{\ensuremath\lat{\Gamma}}
\newcommand{\LL}{\ensuremath\lat{\Lambda}}
\newcommand{\ie}{i.e.,\xspace} 
\newcommand{\eg}{e.g.,\xspace} 
\newcommand{\cfm}{\mu_M} 
\newlength{\dhatheight}
\newcommand{\doublehat}[1]{%
	\settoheight{\dhatheight}{\ensuremath{\widehat{#1}}}%
	\addtolength{\dhatheight}{-0.35ex}%
    	\widehat{\vphantom{\rule{1pt}{\dhatheight}}%
    	\smash{\hspace{-2pt} \widehat{#1}}}}
\newcommand{\ghat}{\widehat{G}}
\newcommand{\ghhat}{\doublehat{G}}
\newcommand*\oline[1]{%
  \vbox{%
    \hrule height 0.5pt
    \kern0.25ex
    \hbox{%
      \kern-0.1em
      \ifmmode#1\else\ensuremath{#1}\fi
      \kern-0.1em
    }
  }
}
\def\blfootnote{\xdef\@thefnmark{}\@footnotetext} 
\def\subjclass{\xdef\@thefnmark{}\@footnotetext}
\long\def\symbolfootnote[#1]#2{\begingroup%
\def\thefootnote{\fnsymbol{footnote}}\footnote[#1]{#2}\endgroup} 
  \renewenvironment{abstract}{%
      \titlepage
      \null\vfil
      \@beginparpenalty\@lowpenalty
      \begin{center}%
        \bfseries \abstractname
        \@endparpenalty\@M
      \end{center}}%
     {\par\vfil\null\endtitlepage}
  \renewenvironment{abstract}{%
      \if@twocolumn
        \section*{\abstractname}%
      \else
        \small
        \list{}{%
          \settowidth{\labelwidth}{\textbf{\abstractname:}}
          \setlength{\leftmargin}{50pt}
          \setlength{\rightmargin}{50pt}
          \setlength{\itemindent}{\labelwidth}
          \addtolength{\itemindent}{\labelsep}
        }
        \item[\textbf{\abstractname:}]

      \fi}
      {\if@twocolumn\else\endlist\fi}
\begin{document}

\title{Density and duality theorems for regular Gabor frames}

\date{April 16, 2015} 

 \author{Mads Sielemann Jakobsen\footnote{Technical University of Denmark, Department of Applied Mathematics and Computer Science, Matematiktorvet 303B, 2800 Kgs.\ Lyngby, Denmark, E-mail: \protect\url{msja@dtu.dk}}\phantom{$\ast$}, Jakob Lemvig\footnote{Technical University of Denmark, Department of Applied Mathematics and Computer Science, Matematiktorvet 303B, 2800 Kgs.\ Lyngby, Denmark, E-mail: \protect\url{jakle@dtu.dk}}}

 \blfootnote{2010 {\it Mathematics Subject Classification.} Primary
   42C15, Secondary: 42C40, 43A25, 43A32, 43A70.} \blfootnote{{\it Key words
     and phrases.} Density, dual frames, duality principle, frame, Gabor
   system, Janssen representation, LCA group,
   Wexler-Raz biorthogonality relations, Weyl-Heisenberg systems}

\maketitle 

\thispagestyle{plain}
\begin{abstract} 
We investigate Gabor frames on locally compact abelian groups with
time-frequency shifts along non-separable, closed subgroups of the phase
space. Density theorems in Gabor analysis state necessary conditions
for a Gabor system to be a frame or a Riesz basis, formulated only in
terms of the index subgroup.  In the classical results the subgroup is
assumed to be discrete.  We prove density theorems for general closed
subgroups of the phase space, where the necessary conditions are given
in terms of the ``size'' of the subgroup. From these density results
we are able to extend the classical Wexler-Raz biorthogonal relations
and the duality principle in Gabor analysis to Gabor systems with
time-frequency shifts along non-separable, closed subgroups
of the phase space. Even in the euclidean setting, our results are new.
\end{abstract}

\section{Introduction}
\label{sec:introduction}

Classical harmonic analysis on locally compact abelian (LCA) groups provides a natural framework for
many of the topics considered in modern time-frequency analysis. The setup is as follows. Let
$(G,\cdot)$ denote a second countable LCA group, and let $(\ghat,\cdot)$ denote its dual group,
consisting of all characters. One then defines the translation operator $T_{\lambda}$, $\lambda \in
G$, as
\[
 T_{\lambda}:L^2(G)\to L^2(G), \ (T_{\lambda}f)(x) = f(x\lambda^{-1}), \quad x\in G,
\]
and the modulation operator $E_{\gamma}$, $\gamma\in \ghat$, as
\[ 
E_{\gamma}:L^2(G)\to L^2(G), \ (E_{\gamma}f)(x) = \gamma(x) f(x), \quad x\in G.
\]
The central objects of this work are so-called regular Gabor systems in $L^2(G)$ with modulation and
translation along a closed subgroup $\Delta$ of $G \times \ghat$ generated by a window function $g
\in L^2(G)$; this is a collection of functions of the following form:
\[
\gaborG{g}:=\gabor{g}, \quad \text{where } \pi(\nu):=E_\gamma T_\lambda \text{ for }
\nu=(\lambda,\gamma) \in G \times \ghat.
\]
The tensor product $G\times \ghat$ is called the phase-space or the time-frequency plane, and
$\pi(\nu)g$ is a time-frequency shift of $g$.

We are interested in linear operators of the form
\[
 C_{g,\Delta}: L^2(G) \to L^2(\Delta), \quad C_{g,\Delta} f = \nu \mapsto
\innerprod{f}{\pi(\nu)g}
\] 
as well as their left-inverses (if they exist) and adjoints. The $C_{g,\Delta}$ transform is called an
\emph{analysis} operator, while its adjoint is called \emph{synthesis}. In the analysis process
$C_{g,\Delta} f$ of a function $f\in L^2(G)$, we obtain information of the time-frequency content in
the function $f$.

If the operator $C_{g,\Delta}$ is bounded below and above, we say that $\gaborG{g}$ is a Gabor frame
for $L^2(G)$. In case the two constants from these bounds can be
taken to be equal, we say that $\gaborG{g}$ is a
tight frame; if they can be
taken to be equal to one, $\gaborG{g}$ is said to be a Parseval frame.  One can show
that the property of being a frame allows for stable reconstruction of any $f \in L^2(G)$ from its
time-frequency information given by $C_{g,\Delta}f$. In particular, if $C_{g,\Delta}$ is bounded
from below and above, then there exists another function $h\in L^2(G)$ such that $C_{h,\Delta}$ is a
bounded operator and such that
\begin{equation*} 
\langle f_1,f_2 \rangle = \int_{\Delta} C_{g,\Delta}f_1(\nu) \, \overline{C_{h,\Delta}f_2(\nu)} \, d\nu  
\end{equation*}
for all $f_1,f_2\in L^2(G)$, where $d\nu$ denotes the Haar measure on $\Delta$. Two such Gabor
systems $\gaborG{g}$ and $\gaborG{h}$ are said to be \emph{dual} Gabor frames.
If $\gaborG{g}$ is a frame with $C_{g,\Delta}$ being surjective, we say that
$\gaborG{g}$ is a \emph{Riesz family}.

In case $\Delta=G\times \ghat$ the analysis operator $C_{g,\Delta}$ is the well-known
\emph{short-time Fourier transform}, usually written $\mathcal{V}_g$, which is an isometry
for any window function $g \in L^2(G)$ satisfying
$\norm{g}=1$. 
In the language of frame theory, $\gaborG[G\times\ghat]{g}$ is said to be a Parseval frame. However,
for other subgroups $\Delta \subsetneq G \times \ghat$ window functions $g\in L^2(G)$ leading to
isometric transforms $C_{g,\Delta}$, or more generally to Gabor frames $\gaborG{g}$ for $L^2(G)$,
might not exist.

The density theorems in Gabor analysis are such non-existence results formulated only as
\emph{necessary conditions on the subgroup $\Delta$} for a Gabor system to be a frame or a Riesz
basis. In particular, the subgroup $\Delta$ needs to possess a certain amount of density. The
classical density results are stated for \emph{uniform lattices} $\Delta$, i.e., discrete and
co-compact subgroups of $G \times \ghat$, where the density is measured by the volume of a
fundamental domain of $\Delta$. For $\Delta=P\Z^{2d}$, $P \in \mathrm{GL}_{2d}(\R)$, in $G \times
\ghat=\R^{2d}$, this volume is exactly $\abs{\det{P}}$. In this work
we introduce a generalization of
this density measure for non-lattices so, for closed subgroups
$\Delta$ of $G \times \ghat$, we set
\[ 
\vol{\Delta} :=  \mu_{(G \times \ghat)/\Delta}((G \times \ghat)/\Delta).
\]  
If $\Delta$ is a uniform lattice equipped with the counting measure, then $\vol{\Delta}$ is exactly
the measure of the fundamental domain. Note that $\vol{\Delta}<\infty$ precisely when $\Delta$ is
co-compact, i.e., $(G \times \ghat)/\Delta$ is compact.  A typical density result says that if
$\Delta$ is a uniform lattice and $\gaborG{g}$ is a frame for $L^2(G)$, then $\vol{\Delta} \le
1$. For separable uniform lattices $\Delta = \Lambda \times \Gamma \subset G \times \ghat$ this
result was proved by Gr\"o{}chenig in \cite{MR1601095}, and for non-separable uniform lattices (in
\emph{elementary} LCA groups) it is a consequence of results by Feichtinger and
Kozek~\cite{MR1601091}. Gr\"ochenig's proof is elementary using the Poisson summation formula, while
the argument for general lattices relies, as is often the case for results on non-separable
lattices, on the theory of pseudo differential operators.
We will give alternative proofs using only time-frequency analysis techniques. More importantly, we will
generalize density results to arbitrary closed subgroups $\Delta \subset G
\times\ghat$. 
We will show that:
\begin{enumerate}[(a)]
\item If $\gaborG{g}$ is a frame for $L^2(G)$, then $\vol{\Delta}<\infty$. 
\label{item:frame-implies-cocompact}
\item If $\Delta$ is a discrete subgroup and $\gaborG{g}$ is a frame,
  then $\vol{\Delta}\le 1$. \label{item:lattice-implies-less-1} 
\item $\{\pi(\nu)g\}_{\nu\in\Delta}$ is a Riesz family for $L^2(G)$ if, and only if, $\Delta$ is a
  uniform lattice with $\vol{\Delta}=1$ and $\{\pi(\nu)g\}_{\nu\in\Delta}$ is a frame for
  $L^2(G)$.  \label{item:riesz-iff-lattice}
\end{enumerate}
While \eqref{item:lattice-implies-less-1} might be expected, it is
rather surprising that density results can be formulated for
non-discrete Gabor systems as in \eqref{item:frame-implies-cocompact}
and \eqref{item:riesz-iff-lattice}. This extension relies crucially on the
fact that the new measure $\vol{\Delta}$ contains information on both
the subgroup $\Delta$ and its Haar measure. The forward direction in
\eqref{item:riesz-iff-lattice} is also somewhat unexpected. The
seemingly weak assumption that $\gaborG{g}$ is a Riesz family for some
closed subgroup $\Delta$ of the phase space has the strong conclusion
that $\Delta$ is a uniform lattice and that $\vol{\Delta}=1$.    
Moreover, we will see that in statement \eqref{item:frame-implies-cocompact} it is, in general, not
possible to be quantitative, that is, if $\Delta$ is non-discrete and co-compact, it will be
possible to construct a frame $\gaborG{g}$ regardless of the value of $\vol{\Delta}<\infty$. This
illustrates that the non-discrete case is rather different from the usual Gabor theory for
lattices. We will exhibit several of these differences in Section~\ref{sec:density} and
\ref{sec:duality}.


From our generalized density theorems, we are then able to extend the duality theory in Gabor
analysis to Gabor systems $\gaborG{g}$ with time-frequency shifts along arbitrary closed subgroups
$\Delta \subset G\times \ghat$. The most fundamental duality principle says that the Gabor system
$\gaborG{g}$ is a Parseval frame, \ie the system is associated with an isometric transform
$C_{g,\Delta}$, if and only if $\gaborG[\Delta^\circ]{\vol{\Delta}^{-1/2}g}$ is an orthonormal set,
where $\Delta^\circ$ denotes the adjoint of $\Delta$. 

We will prove two results that can be seen as an extension of this result. Firstly, to dual frames,
where one allows for two different window functions $g,h \in L^2(G)$ in the analysis and synthesis
transforms; this extension is known as the Wexler-Raz biorthogonality relations. Secondly to
non-tight frames; this result is simply known as the duality principle.  The Wexler-Raz
biorthogonality relations were previously available for non-separable, uniform lattices $\Delta
\subset G \times \ghat$ on elementary LCA groups $G=\R^n \times \T^\ell \times \Z^k \times F_m$ by
the work of Feichtinger and Kozek~\cite{MR1601091}, while the duality
principle (formulated without bounds) was proven by
Feichtinger and Zimmermann~\cite{MR1601107} for Gabor systems $\gaborG{g}$ in $L^2(\R^n)$ with
$\Delta$ being a non-separable, full-rank lattice in $\R^{2n}$. The
authors proved in~\cite{JakobsenCocompact2014} both the Wexler-Raz biorthogonality relations and the duality
principle on LCA groups for \emph{separable}, co-compact subgroups $\Delta = \Lambda \times \Gamma
\subset G \times \ghat$ using the theory of translation invariant systems; an approach that does not
generalize to the non-separable case.

Usually, the density/duality theory for \emph{non-separable} lattice Gabor systems relies on the
theory of pseudo-differential operators and von Neumann algebra techniques.  In particular, the
results of Feichtinger and Kozek~\cite{MR1601091} use concepts of function space Gelfand triples and
generalized Kohn-Nirenberg symbols.  
To cite from Gr\" ochenig's book~\cite{MR1843717}:
\begin{quote}
  These generalizations [\emph{density and duality results for \emph{non-separable} time-frequency
    lattices in the euclidean space}], however, require a completely different approach that involves the analysis of
  pseudo-differential operators with periodic symbols.
\end{quote}

The present paper provides density and duality theorems for Gabor systems $\gaborG{g}$ with time-frequency
shifts along (possibly non-separable) closed subgroups $\Delta \subset
G\times \ghat$ for general second countable
LCA groups $G$. In spite of the above comments, we are able to develop the theory solely within the setting of time-frequency analysis. Indeed,
our proofs are based on Weil's formula, the Fourier transform, the short-time Fourier transform, and
frame theory. 

We mention that duality results in the discrete case have been 
generalized in other directions; we refer the reader to
\cite{MR3319538,MR2441141,MR2535465,MR2078264,Fan2015} and the
references therein. Generalizations of the density theorem also exist;
in particular, Ramanathan and Steger~\cite{MR1325536} obtained density
results for $\gaborG{g}$ in $L^2(\R^n)$, where $\Delta$ is a discrete set,
but not necessarily a subgroup. 
We refer the reader to the survey paper by Heil~\cite{MR2313431} for a
detailed account of the history and evolution of density results in
Gabor analysis. For an introduction to Gabor analysis and frame
theory, we refer to \cite{MR1843717,MR1946982}. 

The paper is organized as follows: Section~\ref{sec:harmonic-ana} and \ref{sec:frame} contain
preliminary facts and results on Fourier analysis on LCA groups and frame theory, respectively. Some
new results on the non-existence of continuous Riesz families are included in
Section~\ref{sec:frame}; these results are essential for our
development in the later sections, however, they are also of independent interest. In Section~\ref{sec:Gabor} we introduce Gabor systems 
and show three key lemmas that will be
important in the proofs of the main results in Sections~\ref{sec:density} and \ref{sec:duality}. In these
sections we show density and duality results for Gabor frames $\gaborG{g}$, where $\Delta$ is a
closed subgroup of the time-frequency domain $G\times\ghat$. Appendix~\ref{sec:proof-lemma-xx}
contains results on the Feichtinger algebra $S_{0}$ that are needed for the proofs in Section~\ref{sec:duality}.






\section{Harmonic analysis on LCA groups}
\label{sec:harmonic-ana}

We let $G$ denote a second countable locally compact abelian group. To $G$ we associate
its dual group $\ghat$ which consists of all characters, \ie all continuous homomorphisms from $G$
into the torus $\T \cong \setprop{z\in\C}{ \abs{z} =1}$. Under pointwise multiplication $\ghat$ is
also a locally compact abelian group. Throughout the paper we use multiplication as
group operation in $G$, $\ghat$, and $G \times \ghat$, and we denote
the identity element by $e$. By the Pontryagin duality theorem, the dual group
of $\ghat$ is isomorphic to $G$ as a topological group, \ie $\ghhat \cong G$. 

We denote the Haar measure on $G$ by $\mu_G$. The (left) Haar measure on any locally compact group
is unique up to a positive constant. From $\mu_G$ we define $L^1(G)$ and the Hilbert space $L^2(G)$
over the complex field in the usual way. Since $G$ is assumed to be second
countable, these function spaces are separable. We define the Fourier transform of $f\in L^1(G)$ by
\[ 
\cF f(\omega) = \hat{f}(\omega) = \int_{G} f(x) \, \overline{\omega(x)} \, d\mu_G(x), \quad
\omega \in \ghat.
\] 
If $f\in L^1(G), \hat{f} \in L^1(\ghat)$, and the measures on $G$ and $\ghat$ are normalized so that
the Plancherel theorem holds (see \cite[(31.1)]{MR0262773}), then the function $f$ can be recovered from
$\hat{f}$ by the inverse Fourier transform
\[
f(x) = \cF^{-1}\hat{f}(x) = \int_{\ghat} \hat{f}(\omega) \, \omega(x) \, d\mu_{\ghat}(\omega),
\quad a.e.\ \ x\in G.
\]
If, in addition, $f$ is continuous, the inversion formula holds pointwise. We assume that the
measure on a group $\mu_G$ and the measure on its dual group $\mu_{\ghat}$ are normalized this way,
and we refer to them as \emph{dual measures}. Under this convention, the Fourier transform $\cF$ is
an isometric isomorphism between $L^2(G)$ and $L^2(\ghat)$.


For $\nu = ( \lambda,\gamma) \in G\times \ghat$, we let $\pi(\nu)$ denote the time-frequency shift
operator $E_{\gamma}T_{\lambda}$. 
It is clear that $\pi(\nu)$ is a unitary operator on $L^2(G)$. The
commutator relation
\begin{align}
T_{\lambda}E_{\gamma} & = \overline{\gamma(\lambda)}
E_{\gamma}T_{\lambda} \nonumber 
\intertext{leads to the following useful identities:}
\pi(\nu)^* & = \overline{\gamma(\lambda)} \, \pi(\nu^{-1}),
\label{eq:cr2} 
\\  \pi(\nu_1)\pi(\nu_2) & = \overline{\gamma_2(\lambda_1)} \, \pi(\nu_1\nu_2) \label{eq:cr3}
\\
\pi(\nu_1)\pi(\nu_2) & = \gamma_1(\lambda_2) \overline{\gamma_2(\lambda_1)} \, \pi(\nu_2)\pi(\nu_1), \label{eq:cr4}
\end{align}
where $\nu_i=(\gamma_i,\lambda_i)$, $i=1,2$, and $\pi(\nu)^*$ denotes
the adjoint operator of $\pi(\nu)$. 


We let $\Delta$ denote a closed subgroup of $G\times\ghat$ with measure $\mu_\Delta$. To ease
notation, when the measure is clear from the context, we write $d\nu$ in place of $d\mu_\Delta(\nu)$
and likewise for other measures. In our settings Weil's formula will relate integrable functions
over $G\times \ghat$ with integrable functions on the quotient space $(G\times \ghat)/\Delta$, where
$\Delta$ is a closed subgroup of $G\times \ghat$. Let $c_{\Delta}: G\times\ghat \to
(G\times\ghat)/\Delta, \ c_{\Delta}(\chi) = \chi\Delta$ be the \emph{canonical map} from
$G\times\ghat$ onto $(G\times\ghat)/\Delta$. If $f\in L^1(G\times\ghat)$, then the function $\dot
\chi \mapsto \int_{\Delta} f(\chi\nu) \, d\nu$ with $\dot \chi = c_{\Delta}(\chi)$, defined almost
everywhere on $(G\times\ghat)/\Delta$, is integrable. Furthermore, when two out of the three Haar
measures on $G\times\ghat$, $\Delta$ and $(G\times\ghat)/\Delta$ are given, the third can be
normalized in a unique way so that \emph{Weil's formula}
\begin{equation}
\label{eq:weil-formula} 
\int_{G\times\ghat} f(\chi) \, d\chi = \int_{(G\times\ghat)/\Delta} \int_{\Delta} f(\chi\nu) \, d\nu \, d\dot \chi 
\end{equation}
holds. 

The annihilator group $\Delta^{\perp}$ of $\Delta \subset G\times\ghat$ is given by
\[ 
\Delta^{\perp}  = \setprop{ (\beta,\alpha) \in \ghat\times G}{\gamma(\alpha) \beta(\lambda) = 1 \
  \text{for all} \ \nu=(\lambda,\gamma) \in \Delta}.
\] 
The annihilator is a closed subgroup of $\ghat\times G$. Moreover,
\[ 
\widehat \Delta \cong (\ghat\times G) / \Delta^{\perp} \quad \text{ and } \quad
((G\times\ghat)/\Delta)^{\widehat{ \ \ \, }} \cong \Delta^{\perp}.
\]
These relations show that for the closed subgroup $\Delta$ the quotient $(G\times \ghat)/\Delta$ is
compact if, and only if, $\Delta^{\perp}$ is discrete. 
  Finally, we define the adjoint $\Delta^{\circ}$ of $\Delta \subset G\times\ghat$
as 
\[ 
\Delta^{\circ} := \{ \mu \in G \times \ghat \, : \, \pi(\mu)\pi(\nu)= \pi(\nu)\pi(\mu) \ \ \forall
\nu\in\Delta\}.
\] 
The annihilator and adjoint of a closed subgroup $\Delta$ are identical up to a change of
coordinates. To see this, we introduce the mapping
\[ 
\Phi: G\times \ghat \ \to \ \ghat\times G, \ \Phi (x,\omega) = (\omega,x) \ \text{ for } \
(x,\omega)\in G\times\ghat.
\]
It is clear that $\Phi$ is a measure preserving, topological group isomorphism. 
It follows from \eqref{eq:cr4} that  $\Phi(\Delta^{\circ}) =
\Delta^{\perp}$. 


We will use the following \textbf{general setup}. We assume a Haar measure on $G$. On the dual group
of any LCA group, we assume the dual measure (such that the Plancherel theorem holds). Furthermore,
we assume a Haar measure on the closed subgroup $\Delta$ of $G\times\ghat$.  By requiring that
Weil's formula~\eqref{eq:weil-formula} holds, there is a uniquely determined measure
$\mu_{(G\times\ghat)/\Delta}$ on $(G\times\ghat)/\Delta$. From this measure, we define the
\emph{size} of the subgroup $\Delta$ as
 \[ 
  \vol{\Delta} = \mu_{(G\times\ghat)/\Delta}((G\times\ghat)/\Delta) .
 \]
Intuitively, small values of $\vol{\Delta}$ suggest that $\Delta$ is ``dense'', while large values of
$\vol{\Delta}$ suggest that $\Delta$ is ``sparse''.
 \begin{remark} \label{rem:group-size}
   \begin{enumerate}[(i)]
   \item In case $\Delta$ is co-compact, by the Plancherel identity, we see that the discrete group
     $\Delta^{\perp}$ is equipped with the Haar measure $\vol{\Delta}^{-1} \mu_c$, where $\mu_c$ is
     the counting measure. In particular, the canonical choice $\vol{\Delta}=1$ comes from the
     probability measure of $(G \times \ghat)/\Delta$ or, equivalently, the counting measure on
     $\Delta^\perp$.
   \item In case $\Delta$ is a discrete, co-compact subgroup, i.e., a uniform lattice, then
     $\vol{\Delta}$ is closely related to the \emph{lattice size} of $\Delta$.  Let
     $s(\Delta)=\mu_{G \times \ghat}(X)$, where $X$ is a Borel section, also called a fundamental
     domain, of $\Delta$ in $G \times \ghat$ \cite{MR1434478,MR1601095}. Now, if we equip $\Delta$
     with the counting measure, then $s(\Delta)=\vol{\Delta}$. Especially for $\R^n$, if $\Delta = P
     \Z^{2n}, \, P\in \mathrm{GL}_\R(2n)$, then $\vol{\Delta} = \vert \! \det(P) \vert$.
   \end{enumerate}
 \end{remark}

\begin{lemma} \label{le:adjoint-lattice} Let $\Delta$ be a closed subgroup of $G\times \ghat$. Then the following holds:
\begin{enumerate}[(i)]
\item $\vol{\Delta} < \infty$ if, and only if, $\Delta$ is co-compact,
\item $\vol{\Delta^{\perp}} < \infty$ if, and only if, $\vol{\Delta^{\circ}} < \infty$ if, and only if, $\Delta$ is discrete.
\end{enumerate}
Furthermore, if $\Delta$ is discrete and co-compact, then
\begin{enumerate}[(i)]
\item [(iii)] $\Delta^{\perp}\subset \ghat\times G$ and $\Delta^{\circ}\subset G\times\ghat$ are discrete and co-compact subgroups,
\item [(iv)] $\vol{\Delta} \vol{\Delta^{\perp}} = 1$ and $\, \vol{\Delta} \vol{\Delta^{\circ}} = 1$.
\end{enumerate}
\end{lemma}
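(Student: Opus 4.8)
The backbone of the whole lemma is the elementary fact that a locally compact group carries a finite Haar measure if, and only if, it is compact. Applying this to the quotient $(G\times\ghat)/\Delta$, whose total measure is by definition $\vol{\Delta}$, immediately yields (i): $\vol{\Delta}<\infty$ exactly when $(G\times\ghat)/\Delta$ is compact, which is the very definition of $\Delta$ being co-compact. The same principle, applied to $\Delta^{\perp}\subset\ghat\times G$ and to $\Delta^{\circ}\subset G\times\ghat$, reduces the remaining statements to tracking compactness and discreteness through the annihilator/adjoint correspondence, so my plan is to establish (i) first and then feed it into everything else.

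For (ii) I would argue as follows. By (i) applied to $\Delta^{\perp}$ inside $\ghat\times G$, we have $\vol{\Delta^{\perp}}<\infty$ iff $\Delta^{\perp}$ is co-compact, i.e.\ iff $(\ghat\times G)/\Delta^{\perp}$ is compact. Using the stated isomorphism $(\ghat\times G)/\Delta^{\perp}\cong\widehat{\Delta}$ together with the Pontryagin fact that $\widehat{\Delta}$ is compact precisely when $\Delta$ is discrete, this gives $\vol{\Delta^{\perp}}<\infty$ iff $\Delta$ is discrete. The adjoint is then handled by the map $\Phi$: since $\Phi$ is a measure-preserving topological isomorphism with $\Phi(\Delta^{\circ})=\Delta^{\perp}$, it descends to a measure-preserving isomorphism of the quotients, so $\vol{\Delta^{\circ}}=\vol{\Delta^{\perp}}$, and in particular the two finiteness conditions coincide. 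This completes (ii).

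Statement (iii) then drops out by combining the two halves of (ii) with (i) and the already-cited equivalence ``$(G\times\ghat)/\Delta$ compact iff $\Delta^{\perp}$ discrete''. Indeed, if $\Delta$ is discrete and co-compact, then co-compactness of $\Delta$ gives discreteness of $\Delta^{\perp}$, while discreteness of $\Delta$ gives, through (ii) followed by (i), co-compactness of $\Delta^{\perp}$; hence $\Delta^{\perp}$ is a uniform lattice. Transporting through the topological isomorphism $\Phi$ shows that $\Delta^{\circ}=\Phi^{-1}(\Delta^{\perp})$ is a uniform lattice as well.

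The quantitative identity (iv) is the real heart of the lemma, and the step I expect to be hardest, because it is the only one sensitive to the measure normalization rather than merely to finiteness. Here $\Delta$, $\Delta^{\perp}$, $\Delta^{\circ}$ are uniform lattices carrying their canonical counting measures, so that $\vol{\Delta}$ and $\vol{\Delta^{\perp}}$ are the covolumes of $\Delta$ and $\Delta^{\perp}$ with respect to the dual Haar measures on $G\times\ghat$ and $\ghat\times G$ (cf.\ Remark~\ref{rem:group-size}). The claim $\vol{\Delta}\,\vol{\Delta^{\perp}}=1$ is then the classical reciprocity between the covolume of a lattice and that of its annihilator, and I would establish it via Poisson summation, equivalently via Weil's formula combined with Plancherel on the compact quotient as in Remark~\ref{rem:group-size}: periodizing a suitable test function over $\Delta$ and expanding it in the characters $\Delta^{\perp}$ of $(G\times\ghat)/\Delta$ introduces exactly the factor $\vol{\Delta}^{-1}$, and applying the same identity to $\Delta^{\perp}$ in $\ghat\times G$ with $\widehat{f}$, using $(\Delta^{\perp})^{\perp}=\Delta$ together with Fourier inversion, forces $\vol{\Delta}\,\vol{\Delta^{\perp}}=1$. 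The second identity $\vol{\Delta}\,\vol{\Delta^{\circ}}=1$ is then immediate from $\vol{\Delta^{\circ}}=\vol{\Delta^{\perp}}$ obtained in (ii). The main obstacle is precisely keeping the three interlocking Haar measures consistent so that the reciprocity constant comes out to $1$; by contrast, the finiteness statements (i)--(iii) are comparatively soft.
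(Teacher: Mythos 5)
Your proposal is correct and follows essentially the same route as the paper: (i) is the finiteness-iff-compactness property of Haar measure applied to $(G\times\ghat)/\Delta$, (ii) follows by feeding (i) into the duality $(\ghat\times G)/\Delta^{\perp}\cong\widehat{\Delta}$, and $\Delta^{\circ}$ is handled throughout via the measure-preserving isomorphism $\Phi$ with $\Phi(\Delta^{\circ})=\Delta^{\perp}$. The only difference is cosmetic: for (iii)--(iv) the paper simply cites \cite{MR1601095} (and \cite[Lemma 7.7.4]{MR1601091} for the adjoint), whereas you sketch the underlying Poisson-summation/Plancherel reciprocity argument with $(\Delta^{\perp})^{\perp}=\Delta$ --- which is precisely the proof contained in the cited reference.
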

\begin{proof} Statement~(i) is just a reformulation of the fact that the Haar measure of an LCA
  group is finite if, and only if, the group is compact. Since $\Delta^{\perp}$ is discrete if, and
  only if $(G\times\ghat)/\Delta$ is co-compact, statement~(ii) follows from~(i). Statements~(iii)
  and~(iv) for $\Delta^\perp$ can be found in \cite{MR1601095}. The statements for $\Delta^{\circ}$
  follow by the relationship between $\Delta^{\circ}$ and $\Delta^{\perp}$, see also \cite[Lemma
  7.7.4]{MR1601091}.
\end{proof}




\section{Frame theory}
\label{sec:frame}

We need a rather general variant of frames, usually called
continuous frames, introduced by Ali, Antoine, and Gazeau
\cite{MR1206084} and Kaiser~\cite{MR1287849}.
\begin{definition} 
\label{def:cont-frames} 
Let $\cH$ be a complex Hilbert space, and let $(M,\Sigma_M,\cfm)$ be a measure space, where
$\Sigma_M$ denotes the $\sigma$-algebra and $\cfm$ the non-negative measure. A family of vectors
$\set{f_k}_{k \in M}$ in $\cH$ is a \emph{frame} for $\cK:=\overline{\Span}\set{f_k}_{k \in
  M}$ with respect to $(M,\Sigma_M,\cfm)$ if
  \begin{enumerate}[(a)]
  \item $k\mapsto f_k$ is weakly measurable, \ie for all $f \in \cK$,
    the mapping $M \to \C, k \mapsto \innerprod{f}{f_k}$ is
    measurable, and
  \item there exist constants $A,B>0$ such that
    \begin{equation}
      \label{eq:cont-frame-inequality} 
      A \norm{f}^2 \le \int_M \abs{\innerprod{f}{f_k}}^2 d\cfm(k) 
      \le B \norm{f}^2 \quad \text{for all } f \in \cK. 
    \end{equation}
  \end{enumerate}
The constants $A$ and $B$ are called \emph{frame bounds}.
\end{definition}

When $\set{f_k}_{k\in M}$ is a frame for its closed linear span $\cK$, we say that
$\set{f_k}_{k\in M}$ is a \emph{basic} frame. If $\cK=\cH$, we say $\set{f_k}_{k\in M}$ is \emph{total}.
If $\set{f_k}_{k\in M}$ is weakly measurable and the upper bound in the
inequality~(\ref{eq:cont-frame-inequality}) holds, then $\set{f_k}_{k\in M}$ is a
\emph{Bessel family} with constant $B$. A frame $\set{f_k}_{k\in M}$ is said to be \emph{tight} if
we can choose $A = B$; if, furthermore, $A = B = 1$, i.e., $C_F$ is isometric, then $\set{f_k}_{k\in
  M}$ is a \emph{Parseval frame}.

To a Bessel family $F:=\set{f_k}_{k\in M}$ for $\cK \subset \cH$, we associate the \emph{analysis operator}
$C_F$ given by
\[ 
C_F: \cH \to L^2(M,\cfm), \quad f \mapsto (k \mapsto \innerprod{f}{f_k}).
\] 
The frame condition~\eqref{eq:cont-frame-inequality} simply says that this operator on $\cK$ is bounded below and above by $\sqrt{A}$
and $\sqrt{B}$, respectively, hence $C_F\vert_{\cK}$ is an injective, bounded linear operator with
closed range. The adjoint $D_F$ of $C_F$ is the so-called \emph{synthesis operator}; it is given
weakly by
\[
D_F: L^2(M,\cfm) \to \cH, \quad c \mapsto  \int_M c(k)
f_k \, d\cfm (k).
\]

For two Bessel families $F=\set{f_k}_{k\in M}$ and $G=\set{g_k}_{k\in
  M}$ we define the \emph{mixed frame
operator} $S_{F,G}=D_F C_G$. If $F$ is a frame, the \emph{frame operator} $S_F:=S_{F,F}$ is a bounded,
invertible, self-adjoint and positive operator. The Bessel families $F$ and $G$ are said to be \emph{dual frames} for $\cH$ if $S_{F,G} = \id[\cH]$, i.e.,
\begin{equation}
  \label{eq:cont-dual-weak}
  \innerprod{f}{g} = \int_M \innerprod{f}{g_k} \innerprod{f_k}{g} d\cfm (k) \quad \text{for all } f,g \in \cH.
\end{equation}
In this case we say that the following assignment
\begin{equation*}
 f = \int_M \innerprod{f}{g_k}f_k \, d\cfm (k) \quad \text{for $f \in \cH$}, 
\end{equation*}
holds in the weak sense. Dual frames for subspaces $\cK$ of $\cH$ are
defined similarly. Two dual frames are indeed frames for $\cH$, see, e.g.,
\cite{JakobsenReproducing2014}. On the other hand, given a frame $F=\set{f_k}_{k\in M}$ for $\cH$ one
can always find at least one dual frame; the canonical choice is
$\set{S_F^{\,-1} f_k}_{k\in M}$. The following result is well-known in
frame theory.
\begin{theorem} 
\label{thm:A} 
Let $\mathcal{H}$ be a Hilbert space, and let $A,B>0$. Then the following statements are equivalent:
\begin{enumerate}[(i)]
\item $\{f_k\}_{k\in M}$ is a frame for $\mathcal{H}$ with bounds $A$ and $B$;
\item $\{f_k\}_{k\in M}$ is a Bessel family in $\mathcal{H}$ with bound $B$ and there exists another Bessel family $\{g_k\}_{k\in M}$ in $\mathcal{H}$ with bound $A^{-1}$ such that \eqref{eq:cont-dual-weak} holds.
\end{enumerate}
\end{theorem}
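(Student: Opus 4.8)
The plan is to prove the two implications separately, working throughout with the analysis and synthesis operators introduced just before the statement rather than with the pointwise inequalities. The key observation is that the weak dual relation \eqref{eq:cont-dual-weak} is exactly the operator identity $S_{F,G} = D_F C_G = \id[\cH]$, and that $D_F = C_F^*$, so the whole argument reduces to linear-operator bookkeeping together with one application of the Cauchy--Schwarz inequality in $L^2(M,\cfm)$.

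For $(i)\Rightarrow(ii)$, I would take $G=\set{g_k}_{k\in M}$ to be the canonical dual family $g_k = S_F^{-1} f_k$. The upper frame bound already exhibits $F$ as Bessel with bound $B$, and by hypothesis the frame operator $S_F = C_F^*C_F$ is bounded, positive, self-adjoint and invertible with $A\,\id[\cH] \le S_F \le B\,\id[\cH]$. Using self-adjointness of $S_F^{-1}$ one checks that the analysis operator of $G$ factors as $C_G = C_F S_F^{-1}$, whence its frame operator is
\[
S_G = C_G^* C_G = S_F^{-1} C_F^* C_F S_F^{-1} = S_F^{-1},
\]
so $\norm{S_G} = \norm{S_F^{-1}} \le A^{-1}$ and $G$ is Bessel with bound $A^{-1}$. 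Finally $S_{F,G} = D_F C_G = C_F^* C_F S_F^{-1} = S_F S_F^{-1} = \id[\cH]$, which is precisely \eqref{eq:cont-dual-weak}.

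For $(ii)\Rightarrow(i)$, the Bessel bound $B$ for $F$ gives the upper frame inequality for free, so only the lower bound remains. For an arbitrary $f\in\cH$ I would estimate, using $D_F = C_F^*$ and then Cauchy--Schwarz in $L^2(M,\cfm)$ together with the Bessel bound $A^{-1}$ of $G$,
\[
\norm{f}^2 = \innerprod{S_{F,G}f}{f} = \innerprod{D_F C_G f}{f} = \innerprod[L^2(M,\cfm)]{C_G f}{C_F f} \le \norm[L^2]{C_G f}\,\norm[L^2]{C_F f} \le A^{-1/2}\norm{f}\,\norm[L^2]{C_F f}.
\]
Dividing by $\norm{f}$ (the case $f=0$ being trivial) and squaring yields $A\norm{f}^2 \le \norm[L^2]{C_F f}^2 = \int_M \abs{\innerprod{f}{f_k}}^2\, d\cfm(k)$, the desired lower bound, so $F$ is a frame with bounds $A$ and $B$.

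The algebra is routine; the only real care lies in the continuous setting, where the operators $C_F$, $D_F$, $S_{F,G}$ are defined weakly. The main point to justify cleanly is therefore that all of these are genuinely bounded operators with $D_F = C_F^*$, so that the factorization $C_G = C_F S_F^{-1}$, the identity $S_{F,G} = D_F C_G$, and the Cauchy--Schwarz step are all legitimate; once boundedness is in place, both implications follow by the manipulations above.
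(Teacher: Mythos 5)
Your proof is correct. Note that the paper itself offers no proof of Theorem~\ref{thm:A} --- it is simply recorded as ``well-known in frame theory'' --- so there is no in-paper argument to compare against; your argument is the standard one: the canonical dual $g_k=S_F^{-1}f_k$ with the factorization $C_G=C_FS_F^{-1}$ (giving $S_G=S_F^{-1}\le A^{-1}\id[\cH]$) for (i)$\Rightarrow$(ii), and Cauchy--Schwarz applied to $\norm{f}^2=\innerprod[L^2(M,\cfm)]{C_Gf}{C_Ff}$ for (ii)$\Rightarrow$(i). The only continuous-setting point beyond the boundedness issues you already flag is weak measurability of $k\mapsto S_F^{-1}f_k$, which is immediate from $\innerprod{f}{S_F^{-1}f_k}=\innerprod{S_F^{-1}f}{f_k}$ and the weak measurability of $F$, so your argument is complete.
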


Frames as defined in Definition~\ref{def:cont-frames} are often called \emph{continuous} frames,
with the notion \emph{discrete} frames reserved for the case, where $M$ is countable and $\cfm$ is
the counting measure. We will not adapt this terminology. A family of vectors $\{f_k\}_{k \in M}$ will be called \emph{continuous}
if $\cfm$ is non-atomic and \emph{discrete} if $\cfm$ is purely atomic \emph{on $\sigma$-finite subsets}. Recall that a set $E \in
\Sigma$ of positive measure is an \emph{atom} if for any measurable subset $F$ of $E$ either
$\cfm(F)=0$ or $\cfm(E \setminus F)=0$. A measure is called \emph{purely atomic} if every
measurable set of positive measure contains an atom and \emph{non-atomic} if there are no
atoms. Every measure can be uniquely decomposed as a sum of a purely atomic and a non-atomic measure
in the sense of Johnson~\cite{MR0279266}.

Let us explain our terminology of discrete and continuous frames. For $a \in L^2(M,\cfm)$ the
support $K:=\supp a$ is $\sigma$-finite, hence we can write $K=\cup_{i \in I} M_i \cup N$, where
each $M_i$ is an atom of finite measure, $I$ is at most countable, and $N$ is an atomless measurable
set. Assume first $\cfm$ is atomic whenever restricted to the subalgebra $\Sigma_K=\setprop{E \cap
  K}{E \in \Sigma}$, where $K$ is a $\sigma$-finite set. Then $\cfm(N)=0$. Since we are interested
in $L^2$-functions, we will either tacitly ignore such null sets or simply say we have equality up to
sets of measure zero. Functions, or rather equivalence classes of functions, in $L^2(M,\cfm)$ are
constant on every atom of $M$, hence any frame $k \mapsto f_k$ will also be constant on atoms (up to
sets of measure zero). Moreover, $a \in L^2(M,\cfm)$ is of the form $\sum_{i \in I} a_i \, \mathds{1}_{M_i}$
for some coefficients $a_i \in \C$. Let $f_i$ denote the value of $k \mapsto f_k$ on $M_i$. Then the
synthesis operator on $a \in L^2(M,\cfm)$ is
\begin{equation}
D_{F} a = \sum_{i \in I} \cfm(M_i) a_i f_i  \qquad \text{for
  all $f \in \cH$},
\label{eq:atomic-synthesis-op}
\end{equation}
which is indeed a discrete representation. Assume, on the other hand, that $\cfm$ is non-atomic on
each $\sigma$-finite subset $K$ of $M$. In the decomposition $K=\cup_{i \in I} M_i \cup N$, we now
have $K=N$ since $K$ is atomless. We can write $K=\cup_{i \in I} K_i$, where $K_i$ is of finite
measure. Then, by a classical result of Sierpinski~\cite[Lemma~52.$\alpha$]{MR0222234}, the measure $\cfm$ takes
a continuum of values $\itvcc{0}{\cfm{(K_i)}}$ on the measurable subsets of $K_i$, which justifies the name \emph{continuous frame}.

If $F=\set{f_k}_{k \in M}$ is a basic frame with bounds $A$ and $B$ and if $C_F$ has dense range,
then $C_F\vert_{\cK}$ is invertible on all of $L^2(M,\cfm)$, and we say that $\set{f_k}_{k \in M}$ is a
\emph{basic Riesz family} with bounds $A$ and $B$. Equivalently, one can define basic Riesz families with
bounds $A$ and $B$ as families of vectors $F=\set{f_k}_{k \in M}$ for which $D_F$ defined on simple,
integrable functions is bounded below and above by $\sqrt{A}$ and $\sqrt{B}$, respectively, i.e.,
\[ 
A \norm[L^2(M)]{a}^2 \le \norm{\int_M a(k)
f_k \, d\cfm (k)}^2 \le B \norm[L^2(M)]{a}^2
\]   
for all simple functions $a$ on $M$ with finite support. If $M$ is countable and equipped with the
counting measure, a basic Riesz family is simply a Riesz basis for its
closed linear span, also called a Riesz sequence. 

Our notion of discrete frames might appear overly technical compared
to the usual definition (i.e., $M$ countable and $\mu_M$ the counting
measure). However, it allows us to classify the following two
pathological ``continuous'' examples as discrete frames.
 \begin{example}
\label{exa:continuous-riesz}
 Consider the following two examples with a ``continuous'' index set $M=\R$:
  \begin{enumerate}[(a)]
  \item Let $\cH = \ell^2(\Z)$, let $\set{e_k}_{k \in \Z}$ be its
    standard orthonormal basis, and equip $M=\R$ with a purely
    atomic measure, whose atoms are the intervals $\itvco{n}{n+1}$, $n
    \in \Z$, each with measure $1$. Define
    $\{f_k\}_{k \in M} \subset \cH$ by $f_k=e_{\floor{k}}$, $k
    \in \R$.
 \item Let $\cH = L^2(\itvcc{0}{1})$, and let $M=\R$. Fix $a \in
   \R$ and define $\mu_M=\sum_{n \in \Z} \delta_{n+a}$, where
   $\delta_x$ denotes the Dirac measure at $x \in \R$. Define
   $\{f_k\}_{k \in M} \subset \cH$ by $f_k(x)=\myexp{2\pi ikx}$.  
 \end{enumerate}
It is not difficult to show that both in case (a)
and (b) the family $\{f_k\}_{k \in M}$  is a Parseval frame; it is even a Riesz family. Since the
measure in both cases is purely atomic, the frame $\{f_k\}_{k \in M}$ is
said to be discrete.  
 \end{example}

There has recently been some interest in the study of (continuous) Riesz families
\cite{Arefijamaal2014,MR3066577}, also called Riesz-type frames in
\cite{MR1968116}. Example \ref{exa:continuous-riesz}(b) is a concrete
version of \cite[Proposition 3.7]{Arefijamaal2014}.  The following
result shows, however, that this concept brings little new to the well-studied subject of
(discrete) Riesz sequences. To be more concise, the result shows that norm bounded, basic Riesz
families necessarily are discrete. 
\begin{proposition}
 \label{th:rieszseq-implies-discrete-1} 
 Let $(M,\Sigma,\cfm)$ be a measure space. Suppose that $\{f_k\}_{k\in
   M}$ is a basic Riesz family in
 $\cH$ that is essentially norm bounded, i.e., $C:=\sup_{k \in
   M}\norm{f_k}<\infty$. Then $\{f_k\}_{k\in M}$ is a discrete family. Furthermore, it holds that
\begin{equation}
\label{eq:mesure-bb}
 \inf_{E \in \Sigma_0}\cfm(E)>0, \qquad \text{where } \Sigma_0 =
\setprop{E \in \Sigma}{\cfm(E)>0}.
\end{equation}
\end{proposition}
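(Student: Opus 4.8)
The plan is to prove the quantitative statement \eqref{eq:mesure-bb} first and then deduce discreteness from it. The engine of the whole argument is to feed indicators of finite-measure sets into the lower Riesz bound. Concretely, fix $E \in \Sigma$ with $0 < \cfm(E) < \infty$. Then $a = \mathds{1}_E$ is a simple function with finite support, so the defining lower bound for a basic Riesz family applies to it and gives $A\,\cfm(E) = A\norm[L^2(M)]{\mathds{1}_E}^2 \le \norm{D_F \mathds{1}_E}^2$. On the other hand $D_F \mathds{1}_E = \int_E f_k \, d\cfm(k)$, and by the triangle inequality for the vector-valued integral together with the norm bound $\norm{f_k} \le C$ we get $\norm{D_F\mathds{1}_E} \le \int_E \norm{f_k}\,d\cfm(k) \le C\,\cfm(E)$. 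Combining the two estimates and dividing by $\cfm(E) > 0$ yields $A \le C^2 \cfm(E)$, that is
\[
\cfm(E) \ge A/C^2 \qquad \text{for every } E \text{ with } 0 < \cfm(E) < \infty.
\]

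With $\delta := A/C^2 > 0$ in hand, the infimum statement is immediate: a set of positive measure is either of finite measure, whence $\cfm(E) \ge \delta$ by the above, or of infinite measure, whence $\cfm(E) = \infty \ge \delta$ trivially. Hence $\inf_{E \in \Sigma_0}\cfm(E) \ge \delta > 0$.

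It remains to show that the uniform lower bound $\delta$ forces $\cfm$ to be purely atomic on every $\sigma$-finite subset, which is exactly our notion of a discrete family. Let $K$ be $\sigma$-finite and let $E \subseteq K$ have positive measure; I must produce an atom inside $E$. Writing $K = \cup_n K_n$ with $\cfm(K_n) < \infty$, some $E \cap K_n$ has positive and finite measure, so I may assume from the start that $0 < \cfm(E) < \infty$. Suppose, for contradiction, that $E$ contains no atom. Then $E$ itself is not an atom, so it splits into two positive-measure pieces; iterating the splitting on the remaining piece (which is again a positive-measure subset of $E$, hence not an atom) produces pairwise disjoint subsets $E_1, E_2, \dots \subseteq E$ of positive finite measure. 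By the key estimate each satisfies $\cfm(E_n) \ge \delta$, so $\sum_n \cfm(E_n) = \infty$; but the $E_n$ are disjoint subsets of $E$, forcing $\sum_n \cfm(E_n) \le \cfm(E) < \infty$, a contradiction. Thus $E$ contains an atom, $\cfm$ is purely atomic on $\Sigma_K$, and $\set{f_k}_{k\in M}$ is discrete.

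The computation is short, so there is no serious obstacle; the points requiring care are (i) that the Riesz lower bound is assumed only for simple functions of finite support, which is why one tests with $\mathds{1}_E$ on finite-measure $E$ and must reduce the atom-finding argument to finite-measure sets via $\sigma$-finiteness, and (ii) justifying $\norm{\int_E f_k \, d\cfm(k)} \le \int_E \norm{f_k}\, d\cfm(k)$ for the weakly defined synthesis integral, which follows by testing against unit vectors $g \in \cH$ and using $\abs{\innerprod{\int_E f_k \, d\cfm(k)}{g}} = \abs{\int_E \innerprod{f_k}{g}\, d\cfm(k)} \le \int_E \norm{f_k}\, d\cfm(k)$.
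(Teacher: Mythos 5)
Your proof is correct and follows essentially the same route as the paper: both test the lower Riesz bound against indicators $\mathds{1}_E$ of finite positive measure, use the norm bound $C$ on the synthesis integral to obtain $\cfm(E)\ge A/C^2$, and then derive pure atomicity on $\sigma$-finite sets by contradiction. The only cosmetic difference is in the final step, where the paper repeatedly halves an atomless set to produce sets of arbitrarily small measure violating \eqref{eq:mesure-bb}, while you extract infinitely many disjoint subsets each of measure at least $A/C^2$, contradicting $\cfm(E)<\infty$ --- two equivalent ways of exploiting the same uniform lower bound.
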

\begin{proof} 
Let $a$ be an integrable simple function on $M$.
From the computation
\[
\norm{\int_M a(k) f_k dm(k)} \le \int_M \abs{a(k)} \norm{f_k} dm(k)
\le C \int_M \abs{a(k)} dm(k),
\]
we see that the lower Riesz bound implies 
\[ 
\sqrt{A} \norm[L^2(M)]{a} \le C \norm[L^1(M)]{a}.
\]
For $a=\mathds{1}_{E}$ with $E \in \Sigma$ and $0 <\cfm{(E)} < \infty$, this
in turn implies that
\[ \sqrt{A} \, \cfm{(E)}^{1/2} \le C \cfm{(E)}, \]
and hence $\cfm(E)\ge A/C^2$. This shows the furthermore-part.

Let $K=\cup_{i \in I} K_i$ be any $\sigma$-finite set, where each $K_i$ is of finite measure. We need to
show that $\cfm$ restricted to the subalgebra $\Sigma_K=\setprop{E \cap K}{E \in \Sigma}$ is purely
atomic. Suppose on the contrary that it is not. Then there is an atomless set $N$ of positive
measure in $\Sigma_K$. For some $i_0 \in I$ the intersection $N \cap K_{i_0}$ has positive
measure. Clearly, $N_0:=N \cap K_{i_0}$ is also atomless, hence we can split this set into two sets of
positive measure. The smallest in measure of these two sets, say $N_1$, is of measure $\cfm(N_1)\le
\cfm(N_0)/2$. Continuing this way we obtain sets of arbitrarily small measure, contradicting~\eqref{eq:mesure-bb}.
\end{proof}



From \eqref{eq:mesure-bb} we see that for norm-bounded Riesz families, the atoms $M_i$ in the
representation \eqref{eq:atomic-synthesis-op} are bounded from below in measure. Hence, even if we
consider the sum in~\eqref{eq:atomic-synthesis-op} as a Riemann type sum, there is a bound to any
refinement.

If we assume that $M$ is a Hausdorff topological group and that $\cfm$ satisfies certain weak regularity
assumptions, \eg $M$ being a locally compact group with the usual left Haar measure, then the existence of a norm-bounded Riesz family
forces the group $M$ to be discrete.
  
\begin{proposition} \label{th:rieszseq-implies-discrete} 
Let $M$ be a Hausdorff topological group with a left Haar measure
$\cfm$ (as defined by Fremlin~\cite[Def.~441D]{MR2462372}). If
$\{f_k\}_{k\in M}$ is a norm bounded basic Riesz family, then $M$ is a discrete group.
\end{proposition}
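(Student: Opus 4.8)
My plan is to assume that $M$ is non-discrete and derive a contradiction, feeding on the structural output of Proposition~\ref{th:rieszseq-implies-discrete-1}. Since $\{f_k\}_{k\in M}$ is a norm-bounded basic Riesz family, that proposition tells us that $\cfm$ is purely atomic on every $\sigma$-finite set and that $\delta:=\inf_{E\in\Sigma_0}\cfm(E)>0$. A Haar measure (in Fremlin's sense) is effectively locally finite and gives positive measure to every nonempty open set, so there is a nonempty open set of finite positive measure; by pure atomicity it contains an atom $A$ with $0<\cfm(A)<\infty$. The entire argument then reduces to showing that such a finite-measure atom cannot exist in a non-discrete group.

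First I would analyse the \emph{stabilizer} $H:=\{h\in M:\cfm(hA\,\triangle\,A)=0\}$. Because $\cfm(hA)=\cfm(A)$ and $hA\cap A$ is a subset of the atom $A$, the number $\cfm(hA\cap A)$ can only be $0$ or $\cfm(A)$, so the map $h\mapsto \cfm(hA\,\triangle\,A)=\|\mathds{1}_{hA}-\mathds{1}_A\|_{L^1(M)}$ takes exactly the two values $0$ and $2\cfm(A)$. By continuity of left translation in $L^1(M)$ (which holds for Haar measure in this generality) this map is continuous, so $H$, as the preimage of $\{0\}$, is an open (indeed clopen) subgroup containing $e$.

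Next I would average over $H$ to convert measure-theoretic invariance into topological regularity. Fix an open neighbourhood $W\subseteq H$ of $e$ with $0<\cfm(W)<\infty$ and put $\phi:=\cfm(W)^{-1}\mathds{1}_W$, so $\int_M\phi\,d\cfm=1$. Since $\cfm(yA\,\triangle\,A)=0$ for every $y\in W\subseteq H$, a Fubini computation gives
\[
\int_M\!\int_M \abs{\phi(y)}\,\bigl\lvert \mathds{1}_A(y^{-1}x)-\mathds{1}_A(x)\bigr\rvert \, d\cfm(y)\,d\cfm(x)=\int_M \abs{\phi(y)}\,\cfm(yA\,\triangle\,A)\,d\cfm(y)=0,
\]
whence $\phi*\mathds{1}_A=\mathds{1}_A$ almost everywhere. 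On the other hand $\phi\in L^1(M)$ and $\mathds{1}_A\in L^\infty(M)$, so $\phi*\mathds{1}_A$ is a \emph{continuous} function (again by continuity of translation). Thus $\mathds{1}_A$ agrees a.e.\ with a continuous function $g$; as $g\in\{0,1\}$ a.e.\ and the open set $g^{-1}(\R\setminus\{0,1\})$ is then null, it is empty, so $g$ is $\{0,1\}$-valued everywhere and $U:=g^{-1}(1)$ is clopen with $A=U$ a.e.

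Finally, $U=A$ a.e.\ is itself an atom with $0<\cfm(U)<\infty$, and $U$ is a nonempty open set. Since $M$ is non-discrete no singleton is open, so $U$ contains two distinct points $p\neq q$; by the Hausdorff property I may pick disjoint open sets $V_p\ni p$, $V_q\ni q$ with $V_p,V_q\subseteq U$. Both have positive measure, yet they are disjoint subsets of the atom $U$, forcing $\cfm(V_q)\le\cfm(U\setminus V_p)=0$, a contradiction. Hence no atom exists, contradicting pure atomicity, so $M$ is discrete. The main obstacle is precisely that $M$ is only a topological group carrying a Haar measure, not necessarily locally compact, so the usual $C_c$-approximate-identity is unavailable; the resolution is to average against the genuine $L^1$-function $\phi=\cfm(W)^{-1}\mathds{1}_W$ and to rely on the two features that persist in this setting --- continuity of translation in $L^1$ and positivity of Haar measure on nonempty open sets --- to upgrade the invariance of $A$ to the statement that $A$ is, up to a null set, open.
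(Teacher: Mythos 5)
Your proof is correct, but it takes a genuinely different route from the paper's. The paper's proof is essentially a one-liner: it combines Proposition~\ref{th:rieszseq-implies-discrete-1} (the measure is purely atomic on $\sigma$-finite sets, hence not non-atomic) with Fremlin's Proposition~443O in \cite{MR2462372}, which states that a Haar measure fails to be non-atomic if and only if the topology is discrete. You instead reprove, by hand, exactly the direction of 443O that is needed: a finite-measure atom forces discreteness. Your stabilizer-and-averaging argument --- $H=\setprop{h}{\cfm(hA\triangle A)=0}$ is clopen because $h\mapsto\normsmall[L^1]{\mathds{1}_{hA}-\mathds{1}_A}$ is continuous and two-valued on an atom, then $\phi*\mathds{1}_A=\mathds{1}_A$ a.e.\ with $\phi*\mathds{1}_A$ continuous, so the atom is a.e.\ a nonempty clopen set, which a non-discrete Hausdorff group splits into two disjoint open sets of positive measure --- is sound, and it buys self-containedness plus a structural insight the citation hides (an atom of Haar measure is, up to a null set, open and closed). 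The cost is that your ``standard'' ingredients are themselves nontrivial in Fremlin's generality: the continuity of $h\mapsto\normsmall[L^1]{\mathds{1}_{hA}-\mathds{1}_A}$ is precisely Fremlin's 443G (continuity of the action on $L^p$), and the Fubini step and the continuity of $\phi*\mathds{1}_A$ require the joint measurability of $(y,x)\mapsto\mathds{1}_A(y^{-1}x)$ with respect to the $\tau$-additive product measure, which for quasi-Radon measures is supplied by Fremlin's convolution theory (\S 444) rather than by classical Fubini; so your proof trades one citation for two or three, though in the paper's actual application ($M=\Delta$ a closed subgroup of a second countable LCA group) everything you use is classical.

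Two small points you should make explicit. First, the existence of the neighbourhood $W$: effectively local finiteness gives some open $V$ with $0<\cfm(V)<\infty$, and you must translate it ($v^{-1}V$ for $v\in V$) and intersect with $H$ to get an open neighbourhood of $e$ of finite positive measure inside $H$; strict positivity of Haar measure on nonempty open sets then gives $\cfm(W)>0$. Second, the existence of the atom $A$ itself needs the same two facts: Proposition~\ref{th:rieszseq-implies-discrete-1} only yields pure atomicity \emph{on $\sigma$-finite sets}, so you need a set of finite positive measure to extract an atom from --- which you correctly sketch, but this is where ``effectively locally finite'' and strict positivity (the latter following from $\tau$-additivity plus translation invariance, since a null nonempty open set would make the whole group null) do real work and deserve a sentence each.
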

\begin{proof}
From Proposition~443O in \cite{MR2462372} we know that $\cfm$ is \emph{not} non-atomic if and only if there is the
discrete topology on $M$. However, by Proposition~\ref{th:rieszseq-implies-discrete-1}, the measure
$\cfm$ is clearly not non-atomic, thus the result follows. 
\end{proof}

We end this section with a Riesz family variant of
Theorem~\ref{thm:A}. 
\begin{theorem}[\!\!\cite{JakobsenCocompact2014}] 
\label{thm:B}
Let $\mathcal{H}$ be a Hilbert space, let $A,B>0$, and let $M$ be a countable index
set equipped with the counting measure. Then the following statements are equivalent:
\begin{enumerate}[(i)]
\item $\{f_k\}_{k\in M}$ is a basic Riesz family (i.e., a Riesz sequence) in $\mathcal{H}$ with bounds $A$ and $B$;
\item $\{f_k\}_{k\in M}$ is a Bessel family $\mathcal{H}$ with bound $B$ and there exists a Bessel family $\{g_k\}_{k\in M}$ in $\mathcal{H}$ with bound $A^{-1}$ such that $\langle f_k , g_\ell\rangle = \delta_{k,\ell}$, $k,\ell\in M$. 
\end{enumerate}
\end{theorem}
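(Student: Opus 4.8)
The plan is to verify the two implications separately, using the characterization of a basic Riesz family with bounds $A$ and $B$ as a two-sided estimate on the synthesis operator. Since $M$ is countable with the counting measure, $L^2(M)=\ell^2(M)$ and the synthesis operator acts by $D_F a = \sum_{k\in M} a_k f_k$; thus statement~(i) is equivalent to requiring that $A\norm[L^2(M)]{a}^2 \le \norm{D_F a}^2 \le B \norm[L^2(M)]{a}^2$ holds for every finitely supported sequence $a=(a_k)_{k\in M}$. Throughout I would write $\cK=\overline{\Span}\set{f_k}_{k\in M}$ and let $P$ denote the orthogonal projection of $\cH$ onto $\cK$.

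For the implication (ii)$\Rightarrow$(i) the upper bound is immediate: a Bessel family with bound $B$ has analysis operator $C_F$ of norm at most $\sqrt{B}$, so its adjoint $D_F$ satisfies $\norm{D_F a}^2 \le B \norm[L^2(M)]{a}^2$. For the lower bound I would fix a finitely supported $a$ and set $f=D_F a = \sum_{k} a_k f_k$; the biorthogonality relations give, for each $\ell$,
\[
\innerprod{f}{g_\ell} = \sum_{k} a_k \innerprod{f_k}{g_\ell} = a_\ell ,
\]
so that, applying the Bessel bound $A^{-1}$ of $\set{g_k}_{k\in M}$ to $f$,
\[
\norm[L^2(M)]{a}^2 = \sum_{\ell} \abs{\innerprod{f}{g_\ell}}^2 \le A^{-1} \norm{f}^2 = A^{-1}\norm{D_F a}^2 .
\]
Rearranging supplies the missing lower bound $A\norm[L^2(M)]{a}^2 \le \norm{D_F a}^2$, so $\set{f_k}_{k\in M}$ is a basic Riesz family with bounds $A$ and $B$.

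For the converse (i)$\Rightarrow$(ii) I would note that a basic Riesz family with bounds $A$ and $B$ is in particular a basic frame for $\cK$ with the same bounds, so the frame operator $S_F$ is invertible on $\cK$ with $A\,\id \le S_F \le B\,\id$ there. Taking $\set{g_k}_{k\in M}$ to be the canonical dual $g_k = S_F^{-1} f_k \in \cK$, a direct computation shows that its frame operator on $\cK$ equals $S_F^{-1}$, whence $\set{g_k}_{k\in M}$ is Bessel on $\cK$ with bound $A^{-1}$; since each $g_k$ lies in $\cK$ and $\innerprod{f}{g_k}=\innerprod{Pf}{g_k}$, this Bessel bound extends from $\cK$ to all of $\cH$. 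Finally, the reconstruction formula for the canonical dual gives $f_\ell = \sum_k \innerprod{f_\ell}{g_k} f_k$ for every $\ell$, where the coefficient sequence $(\innerprod{f_\ell}{g_k})_k = C_G f_\ell$ lies in $\ell^2(M)$; comparing this with the representation $f_\ell = \sum_k \delta_{k,\ell} f_k$ and using that $D_F$ is injective on $\ell^2(M)$ (which is exactly the lower Riesz bound), I would conclude $\innerprod{f_k}{g_\ell} = \delta_{k,\ell}$, the asserted biorthogonality.

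The steps requiring care are bookkeeping rather than genuine obstacles: transferring the Bessel bound of the canonical dual from $\cK$ to all of $\cH$, which is handled by the projection $P$ because each $g_k\in\cK$, and deducing biorthogonality from the reconstruction formula, which is handled by the injectivity of $D_F$ on $\ell^2(M)$. Both implications are the countable, counting-measure specialization of the classical characterization of Riesz sequences by biorthogonal Bessel systems, and I expect the argument to go through with no essential difficulty.
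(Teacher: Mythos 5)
Your proof is correct, but note that the paper itself offers no proof of this statement: Theorem~\ref{thm:B} is quoted from the companion paper \cite{JakobsenCocompact2014}, so there is no internal argument to compare against line by line. What you have written is the standard characterization of Riesz sequences by biorthogonal Bessel systems --- essentially the route of Theorem~5.4.7 and Proposition~5.4.8 in \cite{MR1946982}, which the present paper itself invokes in the proof of Theorem~\ref{thm:riesz-density} --- specialized to the countable, counting-measure setting, and it matches the paper's own equivalent definition of a basic Riesz family via the two-sided bound on the synthesis operator over finitely supported sequences. Both implications check out: in (ii)$\Rightarrow$(i) the identity $\innerprod{D_F a}{g_\ell}=a_\ell$ together with the Bessel bound $A^{-1}$ of $\set{g_k}_{k\in M}$ gives exactly the lower synthesis bound, and in (i)$\Rightarrow$(ii) the two bookkeeping points you flag are handled properly: the Bessel bound of the canonical dual transfers from $\cK$ to $\cH$ because each $g_k\in\cK$ and $\innerprod{f}{g_k}=\innerprod{Pf}{g_k}$ with $\norm{Pf}\le\norm{f}$, and biorthogonality follows from uniqueness of coefficients, i.e., injectivity of $D_F$ on $\ell^2(M)$. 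One small point worth making explicit in that last step: the lower bound $A\norm[\ell^2(M)]{a}^2\le\norm{D_F a}^2$ is initially only stated for finitely supported $a$, and you need it on all of $\ell^2(M)$ to conclude injectivity; this extension is immediate by continuity, since $D_F$ is bounded (by $\sqrt{B}$) and finitely supported sequences are dense, but it should be said. With that one-line remark added, the argument is complete and self-contained.
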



\section{Gabor systems}
\label{sec:Gabor}

The Gabor system $\gaborG{g}=\gabor{g}$ is \emph{regular} when $\Delta$ is a closed subgroup of
$G\times\ghat$. If $\Delta$ is not a
subgroup, e.g., merely a set of points, the Gabor system is \emph{irregular}. If $\Delta = \Lambda
\times \Gamma$ for closed subgroups $\Lambda\subset G$ and $\Gamma\subset \ghat$, we say that
$\gaborG{g} = \{E_{\gamma}T_{\lambda}g\}_{\lambda\in\Lambda,\gamma\in\Gamma}$ is a \emph{separable}
Gabor system. If $\Delta$ is not assumed to have this form, $\gaborG{g}$ is \emph{non-separable}. In
this work we shall consider non-separable, regular Gabor systems.

The analysis, synthesis, and the (mixed) frame operator for Gabor Bessel systems are defined as in
Section~\ref{sec:frame}. In particular, the (mixed) frame operator for two Gabor Bessel systems
generated by the functions $g,h\in L^2(G)$ takes the form
\begin{equation*}
S_{g,h} : L^2(G)\to L^2(G), \quad S_{g,h} = \int_{\Delta} \langle \, \cdot \, , \pi(\nu)g \rangle \pi(\nu)h \,  d\nu .
\end{equation*}
If $g=h$, we recover the frame operator $S_g=S_{g,g}$, also simply denoted by $S$.

It is straightforward to show that the frame operator commutes with
time-frequency shifts with respect to the group $\Delta$. 
\begin{lemma}
\label{thm:frame-op-commutes-time-freq}
Let $g,h\in
L^2(G)$, $\Delta\subset G\times\ghat$, and let $\gaborG{g}$ and $\gaborG{h}$ be Bessel systems. If~$\Delta$~is a closed subgroup of $G \times \ghat$, then the following holds:
\begin{enumerate}[(i)]
\item $S_{g,h} \pi(\nu) = \pi(\nu) S_{g,h}$ for all $\nu \in \Delta$,
\item If $\gabor{g}$ is a frame, then
\[ S^{-1} \pi(\nu) = \pi(\nu) S^{-1} \quad \text{and} \quad S^{-1/2}
\pi(\nu) = \pi(\nu) S^{-1/2} \quad \text{for all } \nu \in \Delta. \]
\end{enumerate}
\end{lemma}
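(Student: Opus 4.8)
The plan is to establish (i) by a direct computation that rests on two facts: the commutation relations \eqref{eq:cr2}--\eqref{eq:cr4} and the translation invariance of the Haar measure $d\nu$ on the \emph{subgroup} $\Delta$. Fix $\mu=(\lambda_\mu,\gamma_\mu)\in\Delta$ and $f\in L^2(G)$. On one side, moving $\pi(\mu)$ through the integral gives
\[
\pi(\mu)\,S_{g,h}f = \int_\Delta \innerprod{f}{\pi(\nu)g}\,\pi(\mu)\pi(\nu)h\,d\nu ,
\]
where $\pi(\mu)\pi(\nu)$ can be rewritten using \eqref{eq:cr3}. On the other side, bringing $\pi(\mu)$ onto the window $g$ via the adjoint identity \eqref{eq:cr2} yields
\[
S_{g,h}\pi(\mu)f = \int_\Delta \innerprod{f}{\pi(\mu)^*\pi(\nu)g}\,\pi(\nu)h\,d\nu .
\]

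The decisive step is the substitution $\nu\mapsto\mu\nu$ in this last integral, which is permissible precisely because $\mu\in\Delta$ and $d\nu$ is invariant under translation by elements of the group $\Delta$. After the change of variables the operator $\pi(\mu)^*\pi(\mu\nu)$ appears inside the inner product; expanding it with \eqref{eq:cr2} and \eqref{eq:cr3} recovers the time-frequency shift $\pi(\nu)$ as on the first side, and the task reduces to checking that the scalar character factors $\gamma(\lambda)$ generated along the way cancel. I expect this phase-factor bookkeeping to be the only delicate point: one must track the character coming from \eqref{eq:cr2} (the $\overline{\gamma_\mu(\lambda_\mu)}$ term) together with those from the two applications of \eqref{eq:cr3}, and verify that their product is identically $1$, so that the two integrands coincide and hence $S_{g,h}\pi(\mu)=\pi(\mu)S_{g,h}$. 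Throughout, the Bessel hypothesis guarantees that $S_{g,h}$ is bounded and that the integrals converge in the weak sense, so all the manipulations are justified.

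For (ii), the frame assumption makes $S=S_{g,g}$ a bounded, positive, invertible, self-adjoint operator, and part (i) gives $S\pi(\nu)=\pi(\nu)S$ for all $\nu\in\Delta$. Conjugating this identity by $S^{-1}$ on both sides immediately yields $S^{-1}\pi(\nu)=\pi(\nu)S^{-1}$. For $S^{-1/2}$ I would argue via the functional calculus: the spectrum of $S$ lies in $\itvcc{A}{B}\subset\itvoo{0}{\infty}$, where $A,B$ are the frame bounds, so by the Weierstrass theorem $t\mapsto t^{-1/2}$ is a uniform limit on $\itvcc{A}{B}$ of polynomials $p_n$, whence $p_n(S)\to S^{-1/2}$ in operator norm. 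Since each $p_n(S)$ commutes with the unitary $\pi(\nu)$ (because $S$ does), so does the norm limit $S^{-1/2}$, which completes the proof.
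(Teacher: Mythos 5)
Your proposal is correct. Note that the paper itself offers no proof of this lemma --- it is stated after the remark that the commutation property ``is straightforward to show'' --- so there is no argument to compare against; yours is exactly the standard proof one would expect the authors had in mind. The one step you left as an expectation, the phase-factor cancellation, does indeed work out, and since it is the only delicate point let me record it: writing $\mu=(\lambda_\mu,\gamma_\mu)$ and $\nu=(\lambda,\gamma)$, the relations \eqref{eq:cr2} and \eqref{eq:cr3} give
\[
\pi(\mu)^*\pi(\mu\nu)=\overline{\gamma_\mu(\lambda_\mu)}\,\pi(\mu^{-1})\pi(\mu\nu)
=\overline{\gamma_\mu(\lambda_\mu)}\;\overline{(\gamma_\mu\gamma)(\lambda_\mu^{-1})}\,\pi(\nu)
=\gamma(\lambda_\mu)\,\pi(\nu),
\]
so after your substitution $\nu\mapsto\mu\nu$ the inner product contributes the conjugate factor $\overline{\gamma(\lambda_\mu)}$, while \eqref{eq:cr3} gives $\pi(\mu\nu)=\gamma(\lambda_\mu)\,\pi(\mu)\pi(\nu)$ for the synthesis side; the product of the two phases is identically $1$, and the integrands match. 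Your justification of the manipulations is also in order: the Bessel hypotheses make all the weak integrals absolutely convergent (Cauchy--Schwarz against the two Bessel bounds), and translation invariance of $\mu_\Delta$ applies precisely because $\mu\in\Delta$ --- this is where closedness of the subgroup, hence the existence of the Haar measure on $\Delta$, is used. Part (ii) is likewise fine: the $S^{-1}$ identity follows by conjugation, and your functional-calculus argument for $S^{-1/2}$ (spectrum in $\itvcc{A}{B}$, Weierstrass approximation of $t\mapsto t^{-1/2}$, norm limits of commuting operators commute) is the standard and complete way to conclude.
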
 

\begin{remark}
\label{re:canonical-gabor-frames}
  Lemma~\ref{thm:frame-op-commutes-time-freq} implies that the
  canonical dual frame of a Gabor frame again is a Gabor system $\gaborG{S^{-1}g}$
  and that the Gabor system $\gaborG{S^{-1/2}g}$ is
  a Parseval frame. In particular, if $\gaborG{g}$ is a Riesz basis, then $\gaborG{S^{-1/2}g}$ is an
  orthonormal basis.
\end{remark}



We are interested in those pairs $(g,\Delta)\subset(L^2(G),G\times \ghat)$ for which $\gaborG{g}$ is
a Gabor frame for $L^2(G)$, that is, closed subgroups $\Delta \subset G \times \ghat$ and window functions $g\in L^2(G)$ for which there exists constants $0<A\le B<\infty$ such that 
\begin{equation*} 
A \, \Vert f \Vert^2 \le \int_{\Delta} \vert \langle f, \pi(\nu) g\rangle\vert^2 \, d\nu \le B \, \Vert f \Vert^2
\end{equation*}
for all $f \in L^2(G)$.

We will need the following well-known Plancherel theorem for the short-time Fourier transform $\mathcal{V}_{g}:=C_{g,G\times \ghat}$.
\begin{lemma}[\!\!\cite{MR1601095}]
 \label{le:stft-norm-preserving} 
For $f_1,f_2,g,h\in L^2(G)$ 
the following assertions are true:
\begin{enumerate}[(i)]
\item $\mathcal{V}_{g}f\in L^2(G\times \ghat)$ and $
\Vert \mathcal{V}_{g}f\Vert_{L^2(G\times \ghat)}^2 = \int_{G\times\ghat} \vert \langle f, \pi(\nu) g\rangle \vert^2 \, d\nu = \Vert g \Vert^2 \, \Vert f \Vert^2$,
\item \begin{equation} \label{eq:dual-stft-norm-bound} \int_{G\times\ghat}  \big\vert\langle f_1, \pi(\nu) g\rangle \langle \pi(\nu)h, f_2\rangle \big\vert \, d\nu  \le \Vert f_1 \Vert \, \Vert f_2 \Vert \, \Vert g \Vert \, \Vert h \Vert < \infty,\end{equation}
\item \begin{equation}
\int_{G\times\ghat} \langle f_1, \pi(\nu) g\rangle \langle \pi(\nu)h, f_2\rangle \, d\nu = \langle f_1,f_2\rangle \langle h,g\rangle.\label{eq:STFT-dual}
\end{equation}
\end{enumerate}
\end{lemma}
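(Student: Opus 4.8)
The plan is to reduce all three identities to the Plancherel theorem on the dual group $\ghat$ by recognizing the short-time Fourier transform as a slice-wise Fourier transform in the frequency variable, and then to dispose of the remaining integration over $G$ using the translation (and inversion) invariance of the Haar measure. The only genuinely delicate points are the two interchanges of integration, so I would order the argument as (i)$\Rightarrow$(ii)$\Rightarrow$(iii): positivity makes Tonelli's theorem available for free in (i), and the bound obtained in (ii) is then precisely the absolute integrability needed to invoke Fubini's theorem in (iii).

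First I would unwind the definition. Fixing $\lambda \in G$ and writing $\nu=(\lambda,\gamma)$, the conventions for the inner product and the Fourier transform give
\[
\mathcal{V}_{g}f(\lambda,\gamma) = \langle f, E_\gamma T_\lambda g\rangle = \int_G f(x)\,\overline{g(x\lambda^{-1})}\,\overline{\gamma(x)}\,d\mu_G(x) = \widehat{F_\lambda}(\gamma), \quad F_\lambda(x):= f(x)\,\overline{g(x\lambda^{-1})}.
\]
Since $f$ and $T_\lambda g$ both lie in $L^2(G)$, Cauchy--Schwarz shows $F_\lambda \in L^1(G)$ for every $\lambda$, so $\widehat{F_\lambda}$ is well defined; moreover $\mathcal{V}_g f$ is jointly measurable, being continuous as a matrix coefficient of the strongly continuous representation $\nu \mapsto \pi(\nu)$.

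For (i) I would express $\int_{G\times\ghat}\lvert\mathcal{V}_g f\rvert^2$ as the iterated integral $\int_G\int_\ghat \lvert\widehat{F_\lambda}(\gamma)\rvert^2\,d\mu_{\ghat}(\gamma)\,d\mu_G(\lambda)$, which is permissible by Tonelli's theorem as the integrand is non-negative and measurable. A separate application of Tonelli together with Haar invariance gives $\int_G\int_G \lvert f(x)\rvert^2\lvert g(x\lambda^{-1})\rvert^2\,d\mu_G(x)\,d\mu_G(\lambda) = \|f\|^2\|g\|^2<\infty$; in particular $F_\lambda \in L^1\cap L^2(G)$ for almost every $\lambda$, so the Plancherel theorem on $\ghat$ yields $\int_\ghat\lvert\widehat{F_\lambda}(\gamma)\rvert^2\,d\mu_{\ghat}(\gamma) = \|F_\lambda\|^2 = \int_G \lvert f(x)\rvert^2\lvert g(x\lambda^{-1})\rvert^2\,d\mu_G(x)$ for those $\lambda$. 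Integrating this identity in $\lambda$ produces $\|f\|^2\|g\|^2$, which proves (i) and shows $\mathcal{V}_g f\in L^2(G\times\ghat)$.

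Statement (ii) is then immediate from (i) by Cauchy--Schwarz: since $\langle\pi(\nu)h,f_2\rangle = \overline{\mathcal{V}_h f_2(\nu)}$, the left-hand integrand equals $\lvert\mathcal{V}_g f_1(\nu)\rvert\,\lvert\mathcal{V}_h f_2(\nu)\rvert$, whose integral is at most $\|\mathcal{V}_g f_1\|\,\|\mathcal{V}_h f_2\| = \|f_1\|\,\|g\|\,\|f_2\|\,\|h\|$. Finally, for (iii) I would rerun the computation of (i) without absolute values and with two signal/window pairs: the bound (ii) guarantees absolute integrability, so Fubini's theorem applies, the polarized Plancherel identity on $\ghat$ replaces the squared version in each frequency slice, and the inner integral over $\lambda$ evaluates to $\int_G \overline{g(x\lambda^{-1})}\,h(x\lambda^{-1})\,d\mu_G(\lambda) = \langle h,g\rangle$ by Haar invariance; pulling this constant out leaves $\langle h,g\rangle\int_G f_1\overline{f_2} = \langle f_1,f_2\rangle\langle h,g\rangle$. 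The substantive work is entirely the bookkeeping around the two interchanges of integration, the conceptual content being just ``Plancherel in the frequency slice, Haar invariance in the time slice.''
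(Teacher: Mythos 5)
Your proof is correct, and it is worth noting how it relates to what the paper actually does: the paper's own proof of this lemma consists only of the Cauchy--Schwarz deduction of (ii) from (i) --- which you reproduce verbatim, since $\langle \pi(\nu)h,f_2\rangle = \overline{\mathcal{V}_h f_2(\nu)}$ --- while (i) and (iii) are simply cited from Gr\"ochenig \cite{MR1601095}. Your slice-wise argument, writing $\mathcal{V}_g f(\lambda,\gamma)=\widehat{F_\lambda}(\gamma)$ with $F_\lambda = f\cdot\overline{T_\lambda g} \in L^1\cap L^2(G)$ for a.e.\ $\lambda$ and applying Plancherel on $\ghat$ in each frequency slice, is essentially the standard proof from that reference, so your proposal makes the lemma self-contained where the paper defers. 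This has the virtue of exposing exactly where the paper's standing conventions enter: the dual-measure normalization of Section~2 (so that Plancherel holds with constant $1$), and the abelianness (hence unimodularity) of $G$, which gives the inversion invariance needed for $\int_G \overline{g(x\lambda^{-1})}\,h(x\lambda^{-1})\,d\mu_G(\lambda) = \langle h,g\rangle$. One small point you should make explicit in (iii): the absolute integrability from (ii) licenses only the interchange in the $(\lambda,\gamma)$ variables; after the polarized Plancherel identity $\int_{\ghat}\widehat{F_\lambda}\,\overline{\widehat{H_\lambda}}\,d\mu_{\ghat} = \langle F_\lambda,H_\lambda\rangle$ (with $H_\lambda = f_2\cdot\overline{T_\lambda h}$), the subsequent swap of the $x$- and $\lambda$-integrals needs its own Fubini--Tonelli justification, which follows from the measure-preserving substitution $u=x\lambda^{-1}$ and two applications of Cauchy--Schwarz, giving $\int_{G\times G}\abs{f_1(x)\overline{f_2(x)}}\,\abs{g(u)\overline{h(u)}}\,d\mu_G(u)\,d\mu_G(x) \le \norm{f_1}\norm{f_2}\norm{g}\norm{h} < \infty$; your phrase ``the inner integral over $\lambda$'' presupposes this second interchange, which is distinct from the one covered by (ii). This is an easy repair, not a flaw in the approach.
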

\begin{proof}
Statement (i) and (iii) can be found in \cite{MR1601095}.
The inequality~\eqref{eq:dual-stft-norm-bound} follows directly from the following computation:
\begin{align*}
\int_{G\times\ghat}  \big\vert\langle f_1, \pi(\nu) g\rangle \langle \pi(\nu)h, f_2\rangle \big\vert \, d\nu & \le \Big( \int_{G\times\ghat} \big\vert\langle f_1, \pi(\nu) g\rangle \big\vert^2  \, d\nu \Big)^{1/2} \Big( \int_{G\times\ghat} \big\vert\langle f_2, \pi(\nu) h\rangle \big\vert^2  \, d\nu \Big)^{1/2} \\
& = \Vert f_1 \Vert \, \Vert f_2 \Vert \, \Vert g \Vert \, \Vert h \Vert.
\end{align*}
\end{proof}

Lemma~\ref{le:stft-norm-preserving} shows that for any non-zero function $g\in L^2(G)$ the system
$\gaborG[G\times \ghat]{g}$ is a tight frame with bound $A=\Vert g \Vert^2$. More generally, if $\langle g,h\rangle \ne 0$, then $\gaborG[G\times\ghat]{g}$ and $\gaborG[G\times\ghat]{h}$ are dual frames, and we have a (weak) reproducing formula:
\[
f= \frac{1}{\langle h,g\rangle}\int_{G\times\ghat} \langle f, \pi(\nu) g\rangle \pi(\nu)h  \, d\nu  \quad\text{for all } f\in L^2(G).
\]

\subsection{Three key lemmas} 
\label{sec:key-identity}

In this subsection we prove three observations that will be important in the subsequent sections. 
 
\begin{lemma} 
\label{le:wex-raz-proof-inner-product-rewrite} 
Let $\chi = (x,\omega)\in (G\times \ghat)$ and $\mu=(\alpha,\beta)\in
\Delta^{\circ}\subset G\times\ghat$. If $\Delta$ is a closed subgroup of $G\times\ghat$, then the equality
\[ 
\langle h, \pi(\mu) g\rangle \langle \pi(\mu) f_1,f_2\rangle = \int_{(G\times \ghat)/\Delta}
\overline{\omega(\alpha)} \beta(x) \, \int_{\Delta} \langle \pi(\chi)^* f_1, \pi(\nu)g\rangle\langle
\pi(\nu) h, \pi(\chi)^* f_2\rangle \, d\nu \, d\dot\chi
\]
holds for all $f_1,f_2,g,h\in L^2(G)$.
\end{lemma}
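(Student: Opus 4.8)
The plan is to collapse the iterated integral on the right-hand side into a single short-time Fourier transform integral over all of $G\times\ghat$ and then invoke the orthogonality relation in Lemma~\ref{le:stft-norm-preserving}(iii). Write $\chi=(x,\omega)$, set
\[
F(\psi) := \langle f_1,\pi(\psi)g\rangle\,\langle\pi(\psi)h,f_2\rangle, \qquad \psi\in G\times\ghat,
\]
and let $\xi_\mu(\psi):=\overline{\sigma(\alpha)}\beta(p)$ for $\psi=(p,\sigma)$, so that the scalar in front of the inner integral is exactly $\xi_\mu(\chi)$. First I would rewrite the two inner products containing $\pi(\chi)^*$. Moving $\pi(\chi)$ onto the second argument by \eqref{eq:cr3} gives $\langle\pi(\chi)^*f_1,\pi(\nu)g\rangle=\gamma(x)\,\langle f_1,\pi(\chi\nu)g\rangle$ and $\langle\pi(\nu)h,\pi(\chi)^*f_2\rangle=\overline{\gamma(x)}\,\langle\pi(\chi\nu)h,f_2\rangle$ for $\nu=(\lambda,\gamma)$; the conjugate phases $\gamma(x)\overline{\gamma(x)}$ cancel and the inner integrand becomes exactly $F(\chi\nu)$. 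Hence the inner integral equals $\int_\Delta F(\chi\nu)\,d\nu$.

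The decisive point is that $\xi_\mu$ is a character of $G\times\ghat$ that is \emph{trivial on $\Delta$}. Indeed, $\mu\in\Delta^{\circ}$ means $\pi(\mu)$ commutes with every $\pi(\nu)$, $\nu\in\Delta$, and by \eqref{eq:cr4} this commutation is precisely the statement that $\xi_\mu(\nu)=\overline{\gamma(\alpha)}\beta(\lambda)=1$ for all $\nu=(\lambda,\gamma)\in\Delta$. Consequently $\xi_\mu(\chi)=\xi_\mu(\chi\nu)$ for $\nu\in\Delta$, the product $\chi\mapsto\xi_\mu(\chi)\int_\Delta F(\chi\nu)\,d\nu$ is constant on the cosets $\chi\Delta$, and the outer integral over $(G\times\ghat)/\Delta$ is well defined. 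Pulling $\xi_\mu(\chi)=\xi_\mu(\chi\nu)$ inside and applying Weil's formula \eqref{eq:weil-formula} to the function $\psi\mapsto\xi_\mu(\psi)F(\psi)$ --- which lies in $L^1(G\times\ghat)$ since $\abs{\xi_\mu}=1$ and $F\in L^1$ by Lemma~\ref{le:stft-norm-preserving}(ii) --- folds the iterated integral into
\[
\int_{G\times\ghat}\xi_\mu(\psi)F(\psi)\,d\psi.
\]

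To finish I would use \eqref{eq:cr4} once more, now to read the character as a time-frequency shift by $\mu$: from $\pi(\psi)\pi(\mu)=\sigma(\alpha)\overline{\beta(p)}\,\pi(\mu)\pi(\psi)$ together with $\pi(\mu)^*\pi(\mu)=\id$ one obtains $\xi_\mu(\psi)\,\langle f_1,\pi(\psi)g\rangle=\langle\pi(\mu)f_1,\pi(\psi)\pi(\mu)g\rangle$, so that $\xi_\mu(\psi)F(\psi)=\langle\pi(\mu)f_1,\pi(\psi)\pi(\mu)g\rangle\,\langle\pi(\psi)h,f_2\rangle$. Applying the orthogonality relation Lemma~\ref{le:stft-norm-preserving}(iii) with window pair $(\pi(\mu)g,h)$ and inputs $(\pi(\mu)f_1,f_2)$ yields $\langle\pi(\mu)f_1,f_2\rangle\,\langle h,\pi(\mu)g\rangle$, which is the left-hand side.

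I expect the main obstacle to be the phase bookkeeping and, above all, verifying that the character factor descends to the quotient: the whole identity hinges on the fact that the cocycle produced by $\pi(\chi)^*$ and by the front scalar is annihilated exactly when $\mu\in\Delta^{\circ}$, so that Weil's formula becomes applicable. A secondary concern is to justify all manipulations without any frame or Bessel hypothesis on $g,h$; this is handled by the absolute integrability bound in Lemma~\ref{le:stft-norm-preserving}(ii), which guarantees $\xi_\mu F\in L^1(G\times\ghat)$ and legitimizes both the use of Weil's formula and the interchange of the scalar with the inner integral.
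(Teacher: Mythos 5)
Your proposal is correct and is essentially the paper's own proof run in reverse: the paper starts from the left-hand side, applies the orthogonality relation Lemma~\ref{le:stft-norm-preserving}(iii) with the window pair $(\pi(\mu)g,h)$ and inputs $(\pi(\mu)f_1,f_2)$, then Weil's formula for $\Delta$, and finally the pointwise phase identity coming from \eqref{eq:cr3}, \eqref{eq:cr4} and $\pi(\mu)\pi(\nu)=\pi(\nu)\pi(\mu)$ for $\mu\in\Delta^{\circ}$ --- exactly your chain of equalities read backwards, with the same $L^1$ justification via Lemma~\ref{le:stft-norm-preserving}(ii). Your explicit packaging of the phase factor as a character $\xi_\mu$ that is trivial on $\Delta$ (hence descends to the quotient) is a clean way of stating what the paper's pointwise identity encodes implicitly, but the ingredients and structure are the same.
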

\begin{proof} 
  If $\mu\in\Delta^{\circ}$, then by Lemma~\ref{le:stft-norm-preserving}(iii) we have that, for
  $f_1,f_2,g,h\in L^2(G)$,
  \[
  \langle h,\pi(\mu)g\rangle\langle \pi(\mu) f_1,f_2\rangle = \int_{G\times\ghat} \langle \pi(\mu)
  f_1, \pi(\chi) \pi(\mu) g\rangle \langle \pi(\chi) h , f_2\rangle \, d\chi.
  \]
By Weil's formula for the closed subgroup $\Delta$, the above equality becomes
  \[ 
  \langle h,\pi(\mu)g\rangle\langle \pi(\mu) f_1,f_2\rangle = \int_{(G\times\ghat)/\Delta}
  \int_{\Delta} \langle \pi(\mu) f_1, \pi(\chi\nu) \pi(\mu) g\rangle \langle \pi(\chi\nu) h ,
  f_2\rangle \, d\nu \, d\dot\chi.
  \] 
  For $\chi=(x,\omega)\in G \times \ghat$ and $\mu=(\alpha,\beta)\in\Delta^{\circ}\subset G \times \ghat$ we have
  \[ 
  \langle \pi(\mu) f_1, \pi(\chi\nu) \pi(\mu) g\rangle \langle \pi(\chi\nu) h , f_2\rangle =
  \overline{\omega(\alpha)} \beta(x) \langle \pi(\chi)^* f_1, \pi(\nu) g\rangle \langle \pi(\nu) h ,
  \pi(\chi)^* f_2\rangle,
  \] 
  which follows from \eqref{eq:cr3}, \eqref{eq:cr4}, and $\pi(\nu)\pi(\mu) = \pi(\mu)\pi(\nu)$ for
  $\nu\in\Delta, \mu\in\Delta^{\circ}$. This completes the proof.
\end{proof}

\begin{lemma} 
\label{th:biorth-identity} 
Let $\Delta$ be a closed, \emph{co-compact} subgroup of $G\times\ghat$, and let $g,h\in L^2(G)$. If $\gaborG{g}$ and
$\gaborG{h}$ are dual frames, then
\[ 
\langle h,\pi(\mu)g\rangle = \vol{\Delta} \, \delta_{\mu,e} \quad \text{for all } \mu\in
\Delta^{\circ}.
\]
\end{lemma}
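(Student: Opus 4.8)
The plan is to substitute the dual-frame hypothesis into the identity furnished by Lemma~\ref{le:wex-raz-proof-inner-product-rewrite} and then to recognise the surviving quotient integral as the integral of a character over the compact group $(G\times\ghat)/\Delta$. Fix $\mu=(\alpha,\beta)\in\Delta^{\circ}$ and apply Lemma~\ref{le:wex-raz-proof-inner-product-rewrite} with arbitrary $f_1,f_2\in L^2(G)$. The first key observation is that, for each fixed $\chi$, the inner $\Delta$-integral $\int_{\Delta}\langle\pi(\chi)^*f_1,\pi(\nu)g\rangle\langle\pi(\nu)h,\pi(\chi)^*f_2\rangle\,d\nu$ is nothing but the weak dual-frame relation \eqref{eq:cont-dual-weak} (i.e.\ $S_{g,h}=\id[L^2(G)]$) evaluated at the vectors $\pi(\chi)^*f_1$ and $\pi(\chi)^*f_2$. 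Since $\gaborG{g}$ and $\gaborG{h}$ are dual frames this integral equals $\langle\pi(\chi)^*f_1,\pi(\chi)^*f_2\rangle$, and unitarity of $\pi(\chi)$ collapses it to $\langle f_1,f_2\rangle$, independently of $\chi$.

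Feeding this back, the identity of Lemma~\ref{le:wex-raz-proof-inner-product-rewrite} reduces to
\[
\langle h,\pi(\mu)g\rangle\,\langle\pi(\mu)f_1,f_2\rangle
=\langle f_1,f_2\rangle\int_{(G\times\ghat)/\Delta}\overline{\omega(\alpha)}\,\beta(x)\,d\dot\chi,
\qquad \chi=(x,\omega).
\]
I would then observe that $\psi\colon(x,\omega)\mapsto\overline{\omega(\alpha)}\beta(x)$ is a continuous character of $G\times\ghat$. Because $\mu\in\Delta^{\circ}$, the scalar relating $\pi(\mu)\pi(\nu)$ to $\pi(\nu)\pi(\mu)$ in \eqref{eq:cr4} is trivial, which says precisely $\overline{\gamma(\alpha)}\beta(\lambda)=1$ for every $(\lambda,\gamma)\in\Delta$; that is, $\psi\equiv 1$ on $\Delta$. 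Hence $\psi$ descends to a character of the compact quotient $(G\times\ghat)/\Delta$, and by the orthogonality relations for characters its integral is the total mass $\vol{\Delta}$ when the descended character is trivial and $0$ otherwise. The descended character is trivial exactly when $\psi\equiv1$ on all of $G\times\ghat$, which by Pontryagin duality forces $\alpha=e$ and $\beta=e$; thus the integral equals $\vol{\Delta}\,\delta_{\mu,e}$.

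It then remains to remove the test functions $f_1,f_2$. When $\mu=e$ we have $\pi(\mu)=\id$, so the reduced identity reads $\langle h,g\rangle\langle f_1,f_2\rangle=\vol{\Delta}\langle f_1,f_2\rangle$, and any choice with $\langle f_1,f_2\rangle\neq0$ yields $\langle h,g\rangle=\vol{\Delta}$. When $\mu\neq e$ the right-hand side vanishes, so $\langle h,\pi(\mu)g\rangle\langle\pi(\mu)f_1,f_2\rangle=0$ for all $f_1,f_2$; taking $f_1\neq0$ and $f_2=\pi(\mu)f_1$ makes $\langle\pi(\mu)f_1,f_2\rangle=\norm{f_1}^2\neq0$ and forces $\langle h,\pi(\mu)g\rangle=0$. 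Together these give the asserted $\langle h,\pi(\mu)g\rangle=\vol{\Delta}\,\delta_{\mu,e}$.

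The genuinely delicate point is the middle step: I must be sure that the weight $\overline{\omega(\alpha)}\beta(x)$ is $\Delta$-invariant (so that it really defines a function, indeed a character, on the quotient) — this is exactly where the hypothesis $\mu\in\Delta^{\circ}$ enters — and that co-compactness of $\Delta$ makes $(G\times\ghat)/\Delta$ compact with finite total mass $\vol{\Delta}$, so that the character orthogonality relations apply. Everything else is a direct substitution together with a convenient choice of test vectors.
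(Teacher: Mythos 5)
Your proof is correct and takes essentially the same route as the paper's: substitute the dual-frame relation into the identity of Lemma~\ref{le:wex-raz-proof-inner-product-rewrite} so the inner $\Delta$-integral collapses to $\langle f_1,f_2\rangle$, observe that $\chi\mapsto\overline{\omega(\alpha)}\beta(x)$ is a (well-defined, since $\mu\in\Delta^\circ$) continuous character on the compact quotient $(G\times\ghat)/\Delta$, and apply character orthogonality (the paper cites \cite[Lemma~23.19]{MR0156915} for this step). The only cosmetic difference is that the paper works with $f_1=f_2=f$ and concludes directly, whereas your choice $f_2=\pi(\mu)f_1$ makes the final elimination of the test functions in the case $\mu\neq e$ slightly more explicit.
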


\begin{proof}
  Let $\mu=(\alpha,\beta)\in \Delta^{\circ}$ and take $f \in L^2(G)$.  By
  Lemma~\ref{le:wex-raz-proof-inner-product-rewrite} we have that
  \[
  \langle h, \pi(\mu) g\rangle \langle \pi(\mu) f,f\rangle = \int_{(G\times \ghat)/\Delta}
  \overline{\omega(\alpha)} \beta(x) \, \int_{\Delta} \langle \pi(\chi)^* f, \pi(\nu)g\rangle\langle
  \pi(\nu) h, \pi(\chi)^* f\rangle \, d\nu \, d\dot\chi.
  \] 
  Since $\{\pi(\nu)g\}_{\nu\in\Delta}$ and $\{\pi(\nu)h\}_{\nu\in\Delta}$ are dual frames by
  assumption, this equation simplifies to
  \begin{align*}
    \langle h, \pi(\mu) g\rangle \langle \pi(\mu) f,f\rangle &= \int_{(G\times \ghat)/\Delta}
    \overline{\omega(\alpha)} \beta(x) \, \langle \pi(\chi)^* f, \pi(\chi)^* f\rangle \, d\dot\chi
    \\ &=
    \int_{(G\times \ghat)/\Delta} \overline{\omega(\alpha)} \beta(x) \, d\dot\chi \, \langle f,
    f\rangle.
  \end{align*}

  The function $\chi\mapsto \mu(\chi) := \overline{\omega(\alpha)} \beta(x)$ is continuous on the
  compact domain $(G\times\ghat)/\Delta$, and it satisfies $\mu(\chi_1\chi_2) =
  \mu(\chi_1)\mu(\chi_2)$. Therefore, \cite[Lemma~23.19]{MR0156915}
implies that, for all
  $f\in L^2(G)$,
  \[
  \langle h,\pi(\mu)g\rangle\langle \pi(\mu) f,f\rangle = \begin{cases} \vol{\Delta} \, \langle
    f,f\rangle & \text{if } \mu=e, \\ 0 & \text{if }\mu\ne e.
  \end{cases}
  \]
  It follows that $\langle h,\pi(\mu)g\rangle = \vol{\Delta} \delta_{\mu,e}$ for $\mu\in
  \Delta^{\circ}$.
\end{proof}

\begin{lemma} 
\label{pr:gabor-fundamental-function-bounded} 
Let $\Delta$ be a closed subgroup of $G\times\ghat$, and let $f_1,
f_2, g,h \in L^2(G)$. If $\gaborG{g}$ and $\gaborG{h}$ are Bessel systems with Bessel bound $B_g$ and
$B_h$, respectively, then for fixed $f_1$ and $f_2$, the mapping
\[ 
\varphi: \ G\times \ghat\to \C , \ \ \chi \mapsto \int_{\Delta} \langle \pi(\chi)^* f_1, \pi(\nu) g
\rangle \langle \pi(\nu) h, \pi(\chi)^* f_2\rangle \, d\nu 
\]
is continuous, constant on cosets of $\Delta$ (i.e., $\Delta$-periodic), and $\varphi(\chi)
\le B_g^{1/2} B_h^{1/2} \Vert f_1 \Vert \Vert f_2\Vert$ for all $\chi \in G\times \ghat$.  Furthermore,
the generators $g$ and $h$ satisfy: 
$\vert \langle h,g\rangle \vert \le \vol{\Delta} B_g^{1/2} B_h^{1/2}$.
\end{lemma}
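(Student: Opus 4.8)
The plan is to recognize $\varphi$ as a matrix coefficient of the mixed frame operator and to exploit strong continuity of time-frequency shifts. Recall from Section~\ref{sec:Gabor} that for the two Bessel systems $\gaborG{g}$ and $\gaborG{h}$ the mixed frame operator $S_{g,h}=\int_{\Delta}\langle\,\cdot\,,\pi(\nu)g\rangle\pi(\nu)h\,d\nu$ is a bounded operator with $\norm{S_{g,h}}\le B_g^{1/2}B_h^{1/2}$, being the composition of the synthesis and analysis operators of the two Bessel systems. Since $\langle S_{g,h}u,v\rangle=\int_{\Delta}\langle u,\pi(\nu)g\rangle\langle\pi(\nu)h,v\rangle\,d\nu$, substituting $u=\pi(\chi)^*f_1$ and $v=\pi(\chi)^*f_2$ gives $\varphi(\chi)=\langle S_{g,h}\pi(\chi)^*f_1,\pi(\chi)^*f_2\rangle$ (which also shows the defining integral converges absolutely). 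Both the bound and continuity then follow at once: the bound holds because $\pi(\chi)$ is unitary, so $\abs{\varphi(\chi)}\le\norm{S_{g,h}}\,\norm{f_1}\,\norm{f_2}\le B_g^{1/2}B_h^{1/2}\norm{f_1}\norm{f_2}$; and continuity holds because $\chi\mapsto\pi(\chi)^*f_i$ is norm continuous from $G\times\ghat$ into $L^2(G)$ (a standard consequence of the continuity of translation and modulation together with the identity $\pi(\chi)^*=\overline{\gamma(\lambda)}\,\pi(\chi^{-1})$ from \eqref{eq:cr2}), while $(u,v)\mapsto\langle S_{g,h}u,v\rangle$ is a bounded sesquilinear form.

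For $\Delta$-periodicity I would compute $\varphi(\chi\nu_0)$ for $\nu_0\in\Delta$ directly. Rewriting $\pi(\chi\nu_0)^*$ in terms of $\pi(\nu_0)^*\pi(\chi)^*$ via the commutator identity \eqref{eq:cr3}, the two unimodular phase factors produced in the two inner products are mutually conjugate and hence cancel; this moves $\pi(\nu_0)$ into both slots, turning the integrand into $\langle\pi(\chi)^*f_1,\pi(\nu_0)\pi(\nu)g\rangle\langle\pi(\nu_0)\pi(\nu)h,\pi(\chi)^*f_2\rangle$. Applying \eqref{eq:cr3} once more yields a second pair of conjugate phases that again cancel, leaving the integrand equal to the original one with $\nu$ replaced by $\nu_0\nu$. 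Invariance of the Haar measure on $\Delta$ then gives $\varphi(\chi\nu_0)=\varphi(\chi)$, so $\varphi$ descends to a bounded, continuous function on $(G\times\ghat)/\Delta$.

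For the final inequality I would start from \eqref{eq:STFT-dual} in Lemma~\ref{le:stft-norm-preserving}(iii), namely $\int_{G\times\ghat}\langle f_1,\pi(\chi)g\rangle\langle\pi(\chi)h,f_2\rangle\,d\chi=\langle f_1,f_2\rangle\langle h,g\rangle$, whose integrand is integrable by part~(ii) of the same lemma. Applying Weil's formula \eqref{eq:weil-formula} for the closed subgroup $\Delta$ and, inside the inner $\Delta$-integral, rewriting $\pi(\chi\nu)$ through \eqref{eq:cr3} exactly as in the periodicity step, the inner integral collapses to precisely $\varphi(\chi)$. This yields $\langle f_1,f_2\rangle\langle h,g\rangle=\int_{(G\times\ghat)/\Delta}\varphi(\chi)\,d\dot\chi$. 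Estimating the right-hand side by $\vol{\Delta}\sup_{\chi}\abs{\varphi(\chi)}\le\vol{\Delta}B_g^{1/2}B_h^{1/2}\norm{f_1}\norm{f_2}$ and then specializing to $f_1=f_2=f$ with $\norm{f}=1$, so that $\langle f_1,f_2\rangle=1$, gives $\abs{\langle h,g\rangle}\le\vol{\Delta}B_g^{1/2}B_h^{1/2}$; when $\Delta$ is not co-compact the claim is vacuous since $\vol{\Delta}=\infty$.

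The phase-factor bookkeeping in the commutator identities is routine and I would not belabor it. The one step deserving care is the passage from \eqref{eq:STFT-dual} to the quotient integral: I must verify that, after Weil's formula, the inner $\Delta$-integral genuinely reproduces $\varphi$, the point being that the character $\gamma(x)$ cancels against its conjugate, and that $\varphi$, being bounded and $\Delta$-periodic, is integrable over the finite-measure quotient in the co-compact case. This identification of the fiber integral with $\varphi$ is the crux that links the three separate claims together.
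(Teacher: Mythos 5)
Your proposal is correct and matches the paper's proof in all essentials: the furthermore-part --- integrability of $\chi\mapsto\langle f_1,\pi(\chi)g\rangle\langle\pi(\chi)h,f_2\rangle$ via Lemma~\ref{le:stft-norm-preserving}, Weil's formula, the rewriting through \eqref{eq:cr3}, integration of the uniform bound over the compact quotient, and the specialization $f_1=f_2$ --- is exactly the paper's argument. The only difference is cosmetic: you package the pointwise bound and the continuity as the matrix coefficient $\langle S_{g,h}\,\pi(\chi)^*f_1,\pi(\chi)^*f_2\rangle$ of the mixed frame operator, which amounts to the same Cauchy--Schwarz estimate as \eqref{eq:varphi-bounded} (since $S_{g,h}$ is the composition of the two Bessel systems' synthesis and analysis operators) while making explicit the norm-continuity argument for $\chi\mapsto\pi(\chi)^*f_i$ that the paper only sketches.
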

\begin{proof}
By the Cauchy-Schwarz inequality, we see that
\begin{multline}
\label{eq:varphi-bounded}
 \int_{\Delta} \big\vert\langle \pi(\chi)^* f_1, \pi(\nu) g \rangle \langle \pi(\nu) h, \pi(\chi)^* f_2\rangle\big\vert \, d\nu \, d\dot\chi \\
\le \Big( \int_{\Delta} \vert\langle \pi(\chi)^* f_1, \pi(\nu) g \rangle\vert^2 \, d\nu \Big)^{1/2}
\Big( \int_{\Delta} \vert\langle \pi(\chi)^* f_2, \pi(\nu) h \rangle\vert^2 \, d\nu \Big)^{1/2}  \\
\le  B_g^{1/2} B_h^{1/2} \Vert f_1\Vert \, \Vert f_2 \Vert.
\end{multline}
This computation shows that $\varphi$ is well-defined and bounded.  The continuity of $\varphi$
can be shown using the Bessel property of $\gaborG{g}$ and $\gaborG{h}$ and the strong continuity of
$\nu \mapsto \pi(\nu)$. 
 The fact that the mapping $\varphi$ is
$\Delta$-periodic is easily verified. We have only left to prove the furthermore-part. By
Lemma~\ref{le:stft-norm-preserving} the mapping $\chi \mapsto \langle f_1, \pi(\chi) g\rangle \langle
\pi(\chi) h, f_2\rangle$ lies in $L^1(G\times\ghat)$. We can therefore apply Weil's formula for the
subgroup $\Delta$ to find that
\begin{equation} \label{eq:frame-implies-cocompact-1} \int_{G\times\ghat} \innerprod{f_1}{\pi(\chi)
    g} \innerprod{\pi(\chi) h}{f_2}\, d\chi = \int_{(G\times \ghat)/\Delta} \int_{\Delta} \innerprod{
    f_1}{\pi(\chi\nu) g} \innerprod{\pi(\chi \nu) h}{f_2} \, d\nu \, d\dot{\chi}.
\end{equation}
For any $\chi=(x,\omega)\in G\times \ghat$ and $\nu = (\lambda,\gamma)\in G\times\ghat$ we have, by
\eqref{eq:cr3},
\[ 
\innerprod{f_1}{\pi(\chi\nu) g} \innerprod{\pi(\chi\nu) h}{f_2} = \innerprod{\pi(\chi)^*
  f_1}{\pi(\nu) g} \innerprod{\pi(\nu) h}{\pi(\chi)^* f_2}. 
\]
With this, equation \eqref{eq:frame-implies-cocompact-1} becomes
\begin{equation} 
\label{eq:frame-implies-cocompact-2}
 \int_{G\times\ghat} \langle f_1, \pi(\chi) g\rangle \langle \pi(\chi) h, f_2\rangle \, d\chi  =
 \int_{(G\times \ghat)/\Delta} \int_{\Delta} \langle \pi(\chi)^* f_1, \pi(\nu) g \rangle \langle
 \pi(\nu) h, \pi(\chi)^* f_2\rangle \, d\nu \, d\dot\chi. 
\end{equation}
Lemma~\ref{le:stft-norm-preserving}, together with \eqref{eq:varphi-bounded} and \eqref{eq:frame-implies-cocompact-2}, yields
\begin{align*}
\vert \langle f_1,f_2\rangle\langle h,g\rangle \vert & = \bigg\vert \int_{G\times\ghat} \langle f_1, \pi(\nu) g\rangle \langle \pi(\nu) h, f_2\rangle \, d\nu\bigg\vert\\
& \le \int_{(G\times \ghat)/\Delta} \int_{\Delta} \big\vert\langle \pi(\chi)^* f_1, \pi(\nu) g \rangle \langle \pi(\nu) h, \pi(\chi)^* f_2\rangle\big\vert \, d\nu \, d\dot\chi \\
& \le \int_{(G\times\ghat)/\Delta} B_g^{1/2} B_h^{1/2} \Vert f_1\Vert \, \Vert f_2 \Vert \, d\dot\chi.
\end{align*}
The bound on $\abs{\innerprod{h}{g}}$ now follows from taking $f_1=f_2$. 
\end{proof}

\section{Density results}
\label{sec:density}


Our first density result shows that co-compactness of $\Delta\subset G\times \ghat$ is a necessary condition
for the frame property of a Gabor system $\gaborG{g}$. 
\begin{theorem} 
\label{th:frame-implies-cocompact} 
Let $\Delta$ be a closed subgroup of $G\times\ghat$, and let $g \in L^2(G)$. If $\gaborG{g}$ is a frame for $L^2(G)$ with bounds $0<A\le B<\infty$, then the following holds:
\begin{enumerate}[(i)]
\item the quotient group $(G\times \ghat)/\Delta$ is compact, i.e., $\vol{\Delta} < \infty$,
\item $A \vol{\Delta} \le \Vert g\Vert^2 \le B \vol{\Delta}.$
\end{enumerate}  
\end{theorem}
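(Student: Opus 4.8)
The plan is to prove both statements at once by specializing the continuous, $\Delta$-periodic function $\varphi$ supplied by Lemma~\ref{pr:gabor-fundamental-function-bounded} to the diagonal case $f_1 = f_2 = f$ and $h = g$. First I would fix an arbitrary non-zero $f \in L^2(G)$ and consider
\[
\varphi(\chi) = \int_\Delta \abs{\innerprod{\pi(\chi)^* f}{\pi(\nu) g}}^2 \, d\nu, \qquad \chi \in G\times\ghat.
\]
Because $\pi(\chi)$ is unitary we have $\norm{\pi(\chi)^* f} = \norm{f}$, so applying the frame inequality for $\gaborG{g}$ to the vector $\pi(\chi)^* f$ yields the pointwise two-sided estimate $A\norm{f}^2 \le \varphi(\chi) \le B\norm{f}^2$, valid for every $\chi \in G\times\ghat$.

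Next I would integrate $\varphi$ over the quotient $(G\times\ghat)/\Delta$. By equation~\eqref{eq:frame-implies-cocompact-2} (taken with $f_1=f_2=f$ and $h=g$) this quotient integral equals $\int_{G\times\ghat} \abs{\innerprod{f}{\pi(\chi)g}}^2 \, d\chi$, which by Lemma~\ref{le:stft-norm-preserving}(i) is exactly $\norm{g}^2\norm{f}^2$. Feeding in the pointwise bounds from the first step then produces the sandwich
\[
A\norm{f}^2 \, \vol{\Delta} \le \norm{g}^2\norm{f}^2 \le B\norm{f}^2 \, \vol{\Delta}.
\]
Dividing through by $\norm{f}^2 > 0$ gives $A\vol{\Delta} \le \norm{g}^2 \le B\vol{\Delta}$, which is precisely~(ii); and since $A>0$ and $\norm{g}^2<\infty$, the left-hand inequality forces $\vol{\Delta}<\infty$, establishing the co-compactness claimed in~(i).

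The step requiring the most care is the identification of the quotient integral of $\varphi$ with $\norm{g}^2\norm{f}^2$; this rests on Weil's formula (already packaged as~\eqref{eq:frame-implies-cocompact-2}) together with the short-time Fourier transform Plancherel identity in Lemma~\ref{le:stft-norm-preserving}(i), so no genuinely new estimate is needed. I would stress that co-compactness cannot be read off from the furthermore-part of Lemma~\ref{pr:gabor-fundamental-function-bounded}, nor from Lemma~\ref{th:biorth-identity} (which already presupposes it); the decisive mechanism here is that a function bounded below by the positive constant $A\norm{f}^2$ on $(G\times\ghat)/\Delta$ can be integrable only when that quotient carries finite measure, and it is the \emph{lower} frame bound, transported pointwise via unitarity of $\pi(\chi)$, that supplies this positivity.
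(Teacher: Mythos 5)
Your proposal is correct and takes essentially the same route as the paper's proof: both transport the frame inequalities to $\pi(\chi)^* f$ via unitarity, integrate over $(G\times\ghat)/\Delta$, and identify the quotient integral with $\Vert g\Vert^2\,\Vert f\Vert^2$ using Weil's formula together with Lemma~\ref{le:stft-norm-preserving}(i), so that the positive lower bound forces finite quotient measure. Your citation of \eqref{eq:frame-implies-cocompact-2} merely packages the same Weil-formula computation that the paper performs inline in \eqref{eq:s-frame-implies-cocompact-1}.
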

\begin{proof}
By Lemma~\ref{le:stft-norm-preserving} the mapping $\chi \mapsto \vert\langle f, \pi(\chi) g\rangle
\vert^2$ lies in $L^1(G\times\ghat)$. Weil's formula for the subgroup $\Delta$ then gives 
 \begin{align} \label{eq:s-frame-implies-cocompact-1}
 \int_{G\times\ghat} \abs{ \innerprod{f}{\pi(\chi) g}}^2  d\chi  &= \int_{(G\times \ghat)/\Delta}
 \int_{\Delta} \abs{\innerprod{f}{\pi(\chi\nu) g}}^2  d\nu \, d\dot{\chi} \nonumber \\ &= \int_{(G\times \ghat)/\Delta}
 \int_{\Delta} \abs{\innerprod{\pi(\chi)^* f}{\pi(\nu) g}}^2  d\nu \, d\dot{\chi},
 \end{align}
 where $\vert \langle f, \pi(\chi\nu) g\rangle \vert = \vert \langle \pi(\chi)^* f, \pi(\nu) g
 \rangle\vert$ follows from~\eqref{eq:cr3}.  The frame assumption of $\{\pi(\nu) g\}_{\nu\in
   \Delta}$ states that
\[ A \, \Vert f \Vert^2 \le \int_{\Delta} \vert \langle  f, \pi(\nu) g \rangle \vert^2 \, d\nu \le B
\, \Vert f \Vert^2 \quad \text{for all } f\in L^2(G).\]
Integrating the lower frame inequality for $\pi(\chi)^* f$ over $(G\times \ghat)/\Delta$  yields the following: 
 \begin{equation*} 
  A \norm{f}^2 \int_{(G\times \ghat)/\Delta}d\dot{\chi} = A  \int_{(G\times \ghat)/\Delta} \norm{\pi(\chi)^* f}^2 d\dot{\chi} \le  \int_{(G\times \ghat)/\Delta} \int_{\Delta} \vert \langle \pi(\chi)^* f, \pi(\nu) g \rangle \vert^2 \, d\nu \, d\dot{\chi}.
 \end{equation*}
By Lemma~\ref{le:stft-norm-preserving}(i) and \eqref{eq:s-frame-implies-cocompact-1} the term on the far right equals $\norm{f}^2 \norm{g}^2$. We conclude that
\[ A \int_{(G\times\ghat)/\Delta} d\dot\chi \le \Vert g\Vert^2  <  \infty.\] 
The measure of the quotient group $(G\times \ghat)/\Delta$ is finite if, and only if $(G\times
\ghat)/\Delta$ is compact. This proves (i) and the lower inequality in (ii). To get the upper bound
in (ii), we look at the upper frame inequality and proceed as above to find: 
\[ 
\Vert g \Vert^2 \, \Vert f \Vert^2 = \int_{G\times\ghat}  \vert \langle f, \pi(\nu) g \rangle
\vert^2 \, d\nu \le B \,\Vert f \Vert^2 \int_{(G\times\ghat)/\Delta} d\dot\chi.
\]
\end{proof}


The assertions of Theorem~\ref{th:frame-implies-cocompact} are
not true in general if we assume that $\gaborG{g}$ is a basic frame,
i.e., a frame for its closed linear span, instead of assuming that $\gaborG{g}$
is a total frame. 
Density results for basic frames in the case of lattice Gabor
systems in $L^2(\R^n)$ have recently been obtained in
\cite{MR3073252}; we will not consider such extensions here.

  Let us consider some implications of the density result in
  Theorem~\ref{th:frame-implies-cocompact} for a couple of specific locally compact abelian
  groups. The first result shows an extreme behavior for the $p$-adic numbers. From
  Lemma~\ref{le:stft-norm-preserving} we know that for the  short-time Fourier transform any nonzero window
  will generate a Gabor frames. However, for the $p$-adic numbers no other time-frequency subgroup will have a frame generator.   
  \begin{corollary} 
    For a prime number $p$, consider the $p$-adic numbers $\mathbb Q_p$. Let $\Delta$ be a closed
    subgroup of $\mathbb Q_p \times \widehat{\mathbb Q}_p$. If $\gaborG{g}$ is a frame for some
    $g\in L^2(\mathbb Q_p)$, then $\Delta = \mathbb Q_p \times \widehat{\mathbb Q}_p$.
\end{corollary}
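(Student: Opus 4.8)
The plan is to combine the co-compactness necessary condition from Theorem~\ref{th:frame-implies-cocompact} with the peculiar subgroup structure of the additive group $\mathbb{Q}_p$. First I would record that $\mathbb{Q}_p$ is self-dual, so that both the phase space $\mathbb{Q}_p \times \widehat{\mathbb{Q}}_p$ and the ambient group $\widehat{\mathbb{Q}}_p \times \mathbb{Q}_p$ of the annihilator are topologically isomorphic to $\mathbb{Q}_p^2$. If $\gaborG{g}$ is a frame for $L^2(\mathbb{Q}_p)$, then Theorem~\ref{th:frame-implies-cocompact}(i) forces $\Delta$ to be co-compact; by the duality relations in Section~\ref{sec:harmonic-ana}, which state that $(G\times\ghat)/\Delta$ is compact if, and only if, $\Delta^{\perp}$ is discrete, co-compactness of $\Delta$ is equivalent to $\Delta^{\perp}$ being \emph{discrete}.

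The heart of the matter is the structural claim that $\mathbb{Q}_p^2$ possesses no nontrivial discrete subgroup. To prove this, I would fix a nonzero $x \in \mathbb{Q}_p^2$ and consider the cyclic subgroup $\mathbb{Z} x$. Its closure equals $\mathbb{Z}_p x$, since $\mathbb{Z}$ is dense in $\mathbb{Z}_p$ and the scalar action $\mathbb{Z}_p \times \mathbb{Q}_p^2 \to \mathbb{Q}_p^2$ is continuous; moreover $a \mapsto a x$ is injective because $x \neq 0$, so $\mathbb{Z}_p x$ is an \emph{infinite compact} subgroup. An infinite compact group is never discrete, so the identity is an accumulation point of every nontrivial subgroup of $\mathbb{Q}_p^2$. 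Hence any discrete subgroup of $\mathbb{Q}_p^2$ is trivial, and in particular $\Delta^{\perp} = \{e\}$.

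Finally, by the reflexivity of annihilators for closed subgroups (a consequence of Pontryagin duality), $\Delta = (\Delta^{\perp})^{\perp} = \{e\}^{\perp} = \mathbb{Q}_p \times \widehat{\mathbb{Q}}_p$, as claimed. Every step except the structural claim is a direct application of the general density theorem together with standard duality, so the one genuinely nonroutine ingredient — and hence the main obstacle — is justifying that the $p$-adic structure excludes nontrivial discrete subgroups. The short route sketched above, namely that each nonzero element generates a compact infinite subgroup in its closure, is what makes this obstacle manageable.
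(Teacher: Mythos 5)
Your proposal is correct, and its skeleton coincides with the paper's: Theorem~\ref{th:frame-implies-cocompact} forces $\Delta$ to be co-compact, and the proof then reduces to a structural fact about subgroups of $\mathbb{Q}_p\times\widehat{\mathbb{Q}}_p\cong\mathbb{Q}_p^2$. The difference is one of completeness rather than of route: the paper simply \emph{cites} the fact that the only co-compact subgroup of $\mathbb{Q}_p\times\widehat{\mathbb{Q}}_p$ is the whole group, whereas you actually prove it, and you do so in dualized form --- co-compactness of $\Delta$ is equivalent (as recorded in Section~\ref{sec:harmonic-ana}) to discreteness of $\Delta^{\perp}$, and $\mathbb{Q}_p^2$ admits no nontrivial discrete subgroup, whence $\Delta^{\perp}=\{e\}$ and $\Delta=(\Delta^{\perp})^{\perp}=\mathbb{Q}_p\times\widehat{\mathbb{Q}}_p$ by Pontryagin reflexivity. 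Your argument for the structural claim (the closure of $\mathbb{Z}x$ is the infinite compact group $\mathbb{Z}_p x$, and density plus homogeneity then makes $e$ an accumulation point of the subgroup) is sound; note it can be shortened further, since for $x\neq 0$ the nonzero elements $p^n x$ of $\mathbb{Z}x$ already satisfy $p^n x\to 0$ as $n\to\infty$, so the identity is directly an accumulation point without invoking the closure. Passing through annihilators buys you a genuinely checkable statement (no nontrivial discrete subgroups) in place of the paper's unproved assertion about co-compact subgroups, at the modest cost of invoking self-duality of $\mathbb{Q}_p$ and double-annihilator reflexivity.
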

\begin{proof} 
  The result follows from Theorem~\ref{th:frame-implies-cocompact} together with the fact that the
  only co-compact subgroup of $\mathbb Q_p \times \widehat{\mathbb Q}_p$ is the entire group itself.
\end{proof}

\begin{corollary} 
\label{col:density-for-Rn}
  Let $g\in L^2(\R^n)$. If the system $\gaborG{g}$ is a regular Gabor frame for $L^2(\R^n)$, then the
  closed subgroup $\Delta$ is of the form $A (\Z^k\times\R^{2n-k})$ for some $A\in
  \mathrm{GL}_{\R}(2n)$ and $0\le k \le 2n$.
\end{corollary}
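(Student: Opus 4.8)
The plan is to combine the co-compactness conclusion of Theorem~\ref{th:frame-implies-cocompact} with the classical structure theorem for closed subgroups of euclidean space. Since $G=\R^n$ we have $\ghat\cong\R^n$ and hence $G\times\ghat\cong\R^{2n}$. Because $\gaborG{g}$ is a frame for $L^2(\R^n)$, Theorem~\ref{th:frame-implies-cocompact}(i) shows that $\Delta$ is co-compact, i.e.\ $\R^{2n}/\Delta$ is compact. Everything then reduces to classifying the co-compact closed subgroups of $\R^{2n}$.

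The second ingredient is the structure theorem for closed subgroups of $\R^m$: every closed subgroup $H\subset\R^m$ can be brought, by a linear automorphism $A\in\mathrm{GL}_{\R}(m)$, into the standard form $A^{-1}H=\Z^k\times\R^\ell\times\{0\}^{m-k-\ell}$ for some integers $k,\ell\ge 0$ with $k+\ell\le m$. One obtains this by taking $\R^\ell$ to be the identity component of $H$, which is a linear subspace; the image of $H$ in the quotient $\R^m/\R^\ell$ is then a closed discrete subgroup, hence a lattice of some rank $k$, and choosing a basis adapted to this decomposition produces the required $A$. Applying this with $m=2n$ yields $A\in\mathrm{GL}_{\R}(2n)$ and integers $k,\ell$ with $\Delta=A\bigl(\Z^k\times\R^\ell\times\{0\}^{2n-k-\ell}\bigr)$.

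It remains to exploit co-compactness. As $A$ is a homeomorphism of $\R^{2n}$, compactness of $\R^{2n}/\Delta$ is equivalent to compactness of
\[
\R^{2n}/\bigl(\Z^k\times\R^\ell\times\{0\}^{2n-k-\ell}\bigr)\cong \T^k\times\{0\}^\ell\times\R^{2n-k-\ell},
\]
which holds if, and only if, the euclidean factor is trivial, that is $2n-k-\ell=0$. Hence $\ell=2n-k$, and we obtain $\Delta=A\bigl(\Z^k\times\R^{2n-k}\bigr)$ with $0\le k\le 2n$, exactly as claimed.

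The argument is essentially bookkeeping once the structure theorem is in hand; the only point requiring care is to ensure that the isomorphism furnished by the classification is realized by a genuine \emph{linear} automorphism of $\R^{2n}$ rather than merely an abstract topological group isomorphism, so that $A$ indeed lies in $\mathrm{GL}_{\R}(2n)$ as the statement demands. I do not expect any genuine obstacle beyond correctly invoking this classical result and matching the ordering of the discrete and continuous factors to the form stated in the corollary.
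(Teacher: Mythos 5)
Your proof is correct and takes essentially the same route as the paper: Theorem~\ref{th:frame-implies-cocompact} gives co-compactness of $\Delta$, and the structure theorem for closed subgroups of $\R^{2n}$ (standard form $\Z^k\times\R^\ell\times\{0\}^{2n-k-\ell}$ up to a linear automorphism) then forces the trivial euclidean-complement factor to vanish, yielding $\Delta=A(\Z^k\times\R^{2n-k})$. Your closing remark that the classification must be realized by a genuine linear automorphism $A\in\mathrm{GL}_{\R}(2n)$, not merely an abstract topological group isomorphism, makes explicit a point the paper's one-line proof leaves tacit.
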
  
\begin{proof}
  Any closed subgroup $\Delta$ of $\R^{2n}$ is isomorphic to $\{0\}^\ell \times \Z^k \times
  \R^{2n-k-\ell}$ for $0 \le k + \ell \le 2n$. The subgroup $\{0\}^\ell \times \Z^k \times
  \R^{2n-k-\ell}$ is co-compact exactly when $\ell=0$. Hence, by
  Theorem~\ref{th:frame-implies-cocompact}, the subgroup $\Delta$ is of the form $A
  (\Z^k\times\R^{2n-k})$ for some $A\in \mathrm{GL}_{\R}(2n)$ and $0\le k \le 2n$.
\end{proof}

The next results relate the norm of a Gabor frame generator to the
subgroup size $\vol{\Delta}$.

\begin{corollary} 
\label{col:WR-tight}
Let $\Delta$ be a closed subgroup of $G\times\ghat$, and let $g \in L^2(G)$.  If $\gaborG{g}$ is a
tight frame with bound $A$, then $\gaborG[\Delta^{\circ}]{g}$ is an orthogonal system with
$\norm{g}^2 = \vol{\Delta} A$.
\end{corollary}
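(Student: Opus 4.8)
The plan is to realize $\gaborG{g}$ as one half of a dual frame pair and then invoke the biorthogonality identity of Lemma~\ref{th:biorth-identity}. Since $\gaborG{g}$ is a tight frame with bound $A$, its frame operator satisfies $S = A\,\id[L^2(G)]$, so the canonical dual window is $S^{-1}g = A^{-1}g$. Consequently $\gaborG{g}$ and $\gaborG{A^{-1}g}$ form a pair of dual frames; indeed, using linearity of the mixed frame operator in its second slot, $S_{g,A^{-1}g} = A^{-1}S_{g,g} = A^{-1}(A\,\id[L^2(G)]) = \id[L^2(G)]$. Moreover, because $\gaborG{g}$ is in particular a frame, Theorem~\ref{th:frame-implies-cocompact} guarantees that $\Delta$ is co-compact, which is precisely the standing hypothesis needed to apply Lemma~\ref{th:biorth-identity}.

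Applying Lemma~\ref{th:biorth-identity} to the dual pair $(g,A^{-1}g)$ yields $\innerprod{A^{-1}g}{\pi(\mu)g} = \vol{\Delta}\,\delta_{\mu,e}$ for all $\mu\in\Delta^{\circ}$, and after multiplying through by $A$,
\[
\innerprod{g}{\pi(\mu)g} = A\,\vol{\Delta}\,\delta_{\mu,e} \quad \text{for all } \mu\in\Delta^{\circ}.
\]
The two conclusions then fall out by specializing $\mu$. Taking $\mu=e$ gives $\norm{g}^2 = \innerprod{g}{g} = A\,\vol{\Delta}$, the claimed norm identity. For the orthogonality, let $\mu_1,\mu_2\in\Delta^{\circ}$ with $\mu_1\neq\mu_2$. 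Writing $\innerprod{\pi(\mu_1)g}{\pi(\mu_2)g} = \innerprod{g}{\pi(\mu_1)^*\pi(\mu_2)g}$ and applying the commutation relations \eqref{eq:cr2} and \eqref{eq:cr3}, the composition $\pi(\mu_1)^*\pi(\mu_2)$ equals a unimodular scalar times $\pi(\mu_1^{-1}\mu_2)$. Since $\Delta^{\circ}$ is a group we have $\mu_1^{-1}\mu_2\in\Delta^{\circ}$, and $\mu_1\neq\mu_2$ forces $\mu_1^{-1}\mu_2\neq e$; the displayed identity then gives $\innerprod{g}{\pi(\mu_1^{-1}\mu_2)g}=0$, whence $\innerprod{\pi(\mu_1)g}{\pi(\mu_2)g}=0$. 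Thus $\gaborG[\Delta^{\circ}]{g}$ is an orthogonal system.

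There is no serious obstacle here: the whole argument is a short deduction from already-established machinery, the two key inputs being (i) that a tight frame has frame operator $A\,\id[L^2(G)]$, so its canonical dual window is just the scaled window $A^{-1}g$, and (ii) co-compactness of $\Delta$, which unlocks Lemma~\ref{th:biorth-identity}. The only step that needs a little care is the passage from $\innerprod{g}{\pi(\mu)g}=0$ for $\mu\neq e$ to genuine pairwise orthogonality of $\gaborG[\Delta^{\circ}]{g}$; this uses the group structure of $\Delta^{\circ}$ together with the projective-representation property of $\pi$, and one should confirm in passing that the prefactor produced by \eqref{eq:cr2}--\eqref{eq:cr3} is unimodular so that it cannot interfere with the vanishing of the inner product, which is immediate since it is a product of character values.
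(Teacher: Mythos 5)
Your proof is correct and takes essentially the same route as the paper: both identify the canonical dual window of the tight frame as $A^{-1}g$, invoke Theorem~\ref{th:frame-implies-cocompact} for co-compactness of $\Delta$, and then apply Lemma~\ref{th:biorth-identity} to the dual pair to obtain $\innerprod{g}{\pi(\mu)g} = A\,\vol{\Delta}\,\delta_{\mu,e}$ on $\Delta^{\circ}$. The only difference is that you spell out the final step---passing from the vanishing at $\mu\neq e$ to genuine pairwise orthogonality via the group structure of $\Delta^{\circ}$ and the unimodular cocycle in \eqref{eq:cr2}--\eqref{eq:cr3}---which the paper leaves implicit; this is a welcome clarification, not a deviation.
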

\begin{proof}
  The canonical dual frame of $\gaborG{g}$ is $\gaborG{\frac1A g}$. From
  Theorem~\ref{th:frame-implies-cocompact} we know that $\Delta$ is co-compact and $\Vert g \Vert^2 = \vol{\Delta}$. By
  Lemma~\ref{th:biorth-identity} it follows that $\gaborG{g}$ is an orthogonal system with 
  $\innerprod{\frac1A g}{g}=\vol{\Delta}$. 
\end{proof}

\begin{corollary}
\label{col:WR-Parseval}
Let $\Delta$ be a closed subgroup of $G\times\ghat$, and let $g \in L^2(G)$.  If $\gaborG{g}$ is a
frame, then $\norm{S^{-1/2}g}^2=\vol{\Delta}$.
\end{corollary}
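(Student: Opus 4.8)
The plan is to reduce the claim to the tight-frame case already settled in Corollary~\ref{col:WR-tight}. The point is that although $\gaborG{g}$ need not itself be tight, its associated canonical Parseval frame $\gaborG{S^{-1/2}g}$ is, and its generator $S^{-1/2}g$ is precisely the vector whose norm we must compute. So the whole statement becomes a direct specialization of the tight case to this particular generator.

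First I would invoke Remark~\ref{re:canonical-gabor-frames}: since $\gaborG{g}$ is a frame with frame operator $S$, the system $\gaborG{S^{-1/2}g}$ is a Parseval frame for $L^2(G)$, that is, a tight frame with bound $A=1$. The substance behind this step is Lemma~\ref{thm:frame-op-commutes-time-freq}(ii), which supplies the commutation $S^{-1/2}\pi(\nu)=\pi(\nu)S^{-1/2}$ for all $\nu\in\Delta$; combined with the self-adjointness of $S^{-1/2}$, this lets one rewrite the frame operator of $\{\pi(\nu)S^{-1/2}g\}_{\nu\in\Delta}$ as $S^{-1/2} S \, S^{-1/2} = \id[L^2(G)]$, confirming tightness with $A=1$. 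With tightness in hand, I would then apply Corollary~\ref{col:WR-tight} verbatim to the generator $S^{-1/2}g$, reading off the norm identity $\norm{S^{-1/2}g}^2=\vol{\Delta}\cdot 1=\vol{\Delta}$, which is exactly the assertion.

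There is essentially no obstacle remaining once these two prerequisites are invoked: the genuine difficulty has already been absorbed into Theorem~\ref{th:frame-implies-cocompact} (which guarantees $\vol{\Delta}<\infty$, so the adjoint $\Delta^{\circ}$ is available and the biorthogonality argument of Lemma~\ref{th:biorth-identity} underlying Corollary~\ref{col:WR-tight} applies) and into the commutation Lemma~\ref{thm:frame-op-commutes-time-freq}. The only point demanding a moment's care is to check that Corollary~\ref{col:WR-tight} may be applied with $g$ replaced by $S^{-1/2}g$; this is immediate, since that corollary is stated for an arbitrary tight-frame generator and imposes no structural hypotheses on the window beyond tightness, which the previous paragraph establishes.
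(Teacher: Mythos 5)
Your proposal is correct and follows essentially the same route as the paper: pass to the canonical Parseval frame $\gaborG{S^{-1/2}g}$ via Remark~\ref{re:canonical-gabor-frames} (underpinned by the commutation relations of Lemma~\ref{thm:frame-op-commutes-time-freq}) and then apply Corollary~\ref{col:WR-tight} with tight bound $A=1$. If anything, you spell out the frame-operator computation $S^{-1/2}SS^{-1/2}=\id[L^2(G)]$ more explicitly than the paper does, and you correctly pinpoint the commutation lemma where the paper's proof cites Lemma~\ref{le:wex-raz-proof-inner-product-rewrite}, apparently a slip.
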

\begin{proof}
  Lemma~\ref{le:wex-raz-proof-inner-product-rewrite} and
  Remark~\ref{re:canonical-gabor-frames} show that $\gaborG{S^{-1/2}g}$ is a Parseval frame. The result now follows from Corollary~\ref{col:WR-tight}.
\end{proof}

If we in addition to co-compactness in Theorem~\ref{th:frame-implies-cocompact} assume that $\Delta$ is discrete,
i.e., a uniform lattice, we have a quantitative density theorem.
\begin{theorem}
\label{thm:density-frame-uniform} Let $\Delta$ be a discrete subgroup
of $G\times\ghat$ equipped with the counting measure, and let $g \in L^2(G)$. If $\gaborG{g}$ is a frame
for $L^2(G)$, then $\Delta$ is a uniform lattice with $\vol{\Delta}\le 1$.
\end{theorem}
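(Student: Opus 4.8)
The plan is to dispose of the structural claim first and then isolate the quantitative one. That $\Delta$ is a uniform lattice is nearly free: $\Delta$ is discrete by hypothesis, and Theorem~\ref{th:frame-implies-cocompact}(i) shows that the frame property forces $(G\times\ghat)/\Delta$ to be compact, so $\Delta$ is co-compact; a discrete, co-compact subgroup is by definition a uniform lattice. In particular $\vol{\Delta}<\infty$, and $\vol{\Delta}>0$ since it is the Haar measure of the nonempty compact group $(G\times\ghat)/\Delta$. All the real content therefore lies in the inequality $\vol{\Delta}\le 1$.

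For that inequality I would pass to the canonical Parseval frame. Writing $S$ for the frame operator of $\gaborG{g}$ and setting $\tilde g = S^{-1/2}g$, Remark~\ref{re:canonical-gabor-frames} (which relies on $S$ commuting with the time-frequency shifts along $\Delta$) tells us that $\{\pi(\nu)\tilde g\}_{\nu\in\Delta}$ is a Parseval frame, while Corollary~\ref{col:WR-Parseval} supplies the crucial norm identity $\norm{\tilde g}^2 = \vol{\Delta}$. These two facts are precisely the ingredients the argument needs.

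The key move is then to use discreteness to isolate the identity term in the Parseval identity. Since $\Delta$ carries the counting measure, the Parseval property evaluated at $f=\tilde g$ reads as an honest sum, and dropping every non-negative summand except the one indexed by $e\in\Delta$ (for which $\pi(e)$ is the identity operator) gives
\[
\norm{\tilde g}^2 = \sum_{\nu\in\Delta}\abs{\innerprod{\tilde g}{\pi(\nu)\tilde g}}^2 \ge \abs{\innerprod{\tilde g}{\pi(e)\tilde g}}^2 = \norm{\tilde g}^4.
\]
Substituting $\norm{\tilde g}^2=\vol{\Delta}$ yields $\vol{\Delta}\ge\vol{\Delta}^2$, and since $\vol{\Delta}>0$ this is exactly $\vol{\Delta}\le 1$.

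I expect the conceptual weight to rest not on any calculation but on recognizing where discreteness enters: it is used in precisely one place, namely to give the singleton $\{e\}$ positive (counting) measure so that its contribution survives. This pinpoints why the bound must fail for non-discrete co-compact $\Delta$, where the frame identity is an integral against a non-atomic measure and the point $e$ contributes nothing. As an alternative that avoids $S^{-1/2}$, one could observe via Corollary~\ref{col:WR-tight} that the adjoint system $\{\pi(\mu)\tilde g\}_{\mu\in\Delta^{\circ}}$ is orthogonal with all norms equal to $\vol{\Delta}^{1/2}$, hence Bessel with bound $\vol{\Delta}$; feeding this into the computation behind the upper bound of Theorem~\ref{th:frame-implies-cocompact}(ii), which uses only the Bessel inequality over the co-compact lattice $\Delta^{\circ}$, gives $\norm{\tilde g}^2 \le \vol{\Delta}\,\vol{\Delta^{\circ}} = 1$ by Lemma~\ref{le:adjoint-lattice}(iv), once more delivering $\vol{\Delta}\le 1$.
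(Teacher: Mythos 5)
Your proof is correct and follows essentially the same route as the paper's: establish that $\Delta$ is a uniform lattice via Theorem~\ref{th:frame-implies-cocompact}, obtain $\norm{S^{-1/2}g}^2=\vol{\Delta}$ from Corollary~\ref{col:WR-Parseval}, and then test the Parseval frame $\gaborG{S^{-1/2}g}$ against $f=S^{-1/2}g$, where discreteness lets the single term at $\nu=e$ contribute $\norm{S^{-1/2}g}^4$ and forces $\vol{\Delta}\le 1$ --- your explicit ``drop all terms but $\nu=e$'' step is exactly what the paper's phrase ``using that $\Delta$ is discrete'' abbreviates. Your sketched alternative via Corollary~\ref{col:WR-tight} and Lemma~\ref{le:adjoint-lattice}(iv) is also sound, but it is a remark on top of what is already a complete proof matching the paper's.
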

\begin{proof} By Theorem~\ref{th:frame-implies-cocompact} it follows
  that $\Delta$ is a uniform lattice.
From Corollary~\ref{col:WR-Parseval} we have that $\Vert
  S^{-1/2}g \Vert^2 =\vol{\Delta}$. Taking $f=S^{-1/2}g$ in the upper
  frame inequality for $\gaborG{S^{-1/2}g}$ yields, using that
  $\Delta$ is discrete, that $\Vert S^{-1/2} g \Vert^2 \le 1$. We
  conclude that $\vol{\Delta}\le 1.$
\end{proof}

In Theorem~\ref{thm:density-frame-uniform} the assumption that $\Delta$ is discrete is essential for
the bound $\vol{\Delta}\le 1$.  Indeed, in Example~\ref{exa:construction-of-frames-in-L2Rn} in
Section~\ref{thm:duality-principle} we will show that in $L^2(\R^n)$ for $\Delta \subset \R^{2n}$
separable and co-compact, but non-discrete, it will always be possible to construct a frame $\gaborG{g}$
regardless of the value of $\vol{\Delta}$. The construction relies on the duality
principle, which is why the example is relegated to Section~\ref{thm:duality-principle}.


\begin{theorem}
\label{thm:riesz-density}
Let $\Delta$ be a closed subgroup of $G\times\ghat$, and let $g\in
L^2(G)$. Then $\gaborG{g}$ is a total Riesz family for $L^2(G)$
if, and only if, $\Delta$ is a uniform lattice, $\vol{\Delta} = 1$,
and $\gaborG{g}$ is a frame.
\end{theorem}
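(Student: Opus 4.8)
The plan is to prove the two implications separately, in each case reducing to results already established for frames and for norm-bounded Riesz families. Suppose first that $\gaborG{g}$ is a total Riesz family. By definition it is in particular a frame for $L^2(G)$, so Theorem~\ref{th:frame-implies-cocompact} gives that $\Delta$ is co-compact. Since $\pi(\nu)$ is unitary we have $\norm{\pi(\nu)g}=\norm{g}$ for every $\nu$, so the family is essentially norm bounded; viewing $\Delta$ as a Hausdorff topological group carrying its Haar measure, Proposition~\ref{th:rieszseq-implies-discrete} then forces $\Delta$ to be discrete. Hence $\Delta$ is a uniform lattice, which we equip with the counting measure, and with respect to this measure a total Riesz family is precisely a Riesz basis.

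It remains to pin down $\vol{\Delta}=1$. Because $\gaborG{g}$ is now a Riesz basis, Remark~\ref{re:canonical-gabor-frames} applies and $\gaborG{S^{-1/2}g}$ is an orthonormal basis; in particular the term indexed by $\nu=e$, namely $S^{-1/2}g=\pi(e)S^{-1/2}g$, has norm one. Corollary~\ref{col:WR-Parseval} identifies this norm with the subgroup size via $\norm{S^{-1/2}g}^2=\vol{\Delta}$, whence $\vol{\Delta}=1$. (Alternatively, $\vol{\Delta}\le 1$ is Theorem~\ref{thm:density-frame-uniform}, and the reverse inequality follows by comparing the Riesz-basis biorthogonality $\langle g, S^{-1}g\rangle=1$ with Lemma~\ref{th:biorth-identity}, which—since $\gaborG{g}$ and its canonical dual $\gaborG{S^{-1}g}$ are dual frames and $\Delta$ is co-compact—gives $\langle S^{-1}g, g\rangle=\vol{\Delta}$ at $\mu=e\in\Delta^{\circ}$; self-adjointness of $S^{-1}$ equates the two.)

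Conversely, assume $\Delta$ is a uniform lattice with $\vol{\Delta}=1$ and that $\gaborG{g}$ is a frame, so that it is total. Corollary~\ref{col:WR-Parseval} yields $\norm{S^{-1/2}g}^2=\vol{\Delta}=1$, while Remark~\ref{re:canonical-gabor-frames} guarantees that $\gaborG{S^{-1/2}g}$ is a Parseval frame. Since $\Delta$ carries the counting measure and every element $\pi(\nu)S^{-1/2}g$ has norm $\norm{S^{-1/2}g}=1$, the standard fact that a norm-one Parseval frame is an orthonormal basis (if $\norm{e_k}=1$ then $1=\sum_\ell\abs{\langle e_k,e_\ell\rangle}^2=1+\sum_{\ell\ne k}\abs{\langle e_k,e_\ell\rangle}^2$ forces orthonormality) shows $\gaborG{S^{-1/2}g}$ is an orthonormal basis. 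Using the commutation $\pi(\nu)=S^{1/2}\pi(\nu)S^{-1/2}$ from Lemma~\ref{thm:frame-op-commutes-time-freq} we obtain $\pi(\nu)g=S^{1/2}\bigl(\pi(\nu)S^{-1/2}g\bigr)$, so $\gaborG{g}$ is the image of an orthonormal basis under the bounded, boundedly invertible operator $S^{1/2}$; it is therefore a Riesz basis, i.e.\ a total Riesz family.

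The conceptual crux is the forward direction: that the mere Riesz property, with no a priori discreteness assumption on $\Delta$, forces $\Delta$ to be discrete. This is exactly where Proposition~\ref{th:rieszseq-implies-discrete} (and, behind it, Proposition~\ref{th:rieszseq-implies-discrete-1}) does the heavy lifting, converting the lower Riesz bound into a uniform positive lower bound on the measure of sets of positive measure, which is incompatible with a non-atomic Haar measure. Once discreteness is secured, the remaining ingredients—co-compactness, the identity $\norm{S^{-1/2}g}^2=\vol{\Delta}$, and the norm-one-Parseval/orthonormal-basis dichotomy—are routine; the only point demanding care is the normalization, since $\vol{\Delta}=1$ is the correct statement precisely under the counting-measure convention for lattices.
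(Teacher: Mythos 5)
Your proof is correct and follows essentially the same route as the paper: Proposition~\ref{th:rieszseq-implies-discrete} (via unitarity of $\pi(\nu)$) plus Theorem~\ref{th:frame-implies-cocompact} for discreteness and co-compactness, then $\norm{S^{-1/2}g}^2=1=\vol{\Delta}$ via Remark~\ref{re:canonical-gabor-frames} and Corollary~\ref{col:WR-Parseval}. For the converse you reproduce the paper's second (``alternative'') argument --- norm-one Parseval frame implies orthonormal basis, then apply the bounded invertible operator $S^{1/2}$ using Lemma~\ref{thm:frame-op-commutes-time-freq} --- and your parenthetical derivation of $\vol{\Delta}=1$ from Lemma~\ref{th:biorth-identity} is a valid minor variant.
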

\begin{proof} 
  Assume that $\gaborG{g}$ is a total Riesz family. By Proposition
 ~\ref{th:rieszseq-implies-discrete} and Theorem~\ref{th:frame-implies-cocompact}, the subgroup $\Delta$ is discrete and co-compact. Hence,
  $\gaborG{g}$ is a Riesz basis, and $\gaborG{S^{-1/2}g}$ is therefore an orthonormal basis for
  $L^2(G)$, see Remark~\ref{re:canonical-gabor-frames}. It follows that $\Vert S^{-1/2}g\Vert^2 =
  1$. Furthermore, by Corollary~\ref{col:WR-Parseval}, we have that $\Vert S^{-1/2}g\Vert^2 =
  \vol{\Delta}$. Hence $\vol{\Delta} = 1$.

  For the converse implication note that $\Vert S^{-1/2} g \Vert ^2 = \vol{\Delta} = 1$ by
  Corollary~\ref{col:WR-Parseval}. By isometry of the time-frequency shifts we see that
  $\innerprod{\pi(\nu)g}{\pi(\nu)S^{-1}g}=1$ for all $\nu \in \Delta$. By Theorem 5.4.7~and
  Proposition~5.4.8 in \cite{MR1946982}, it follows that $\gaborG{g}$ and $\gaborG{S^{-1}g}$ are
  dual Riesz bases, and we conclude that $\gaborG{g}$ is a Riesz basis. Alternatively, we can arrive
  at this conclusion as follows. Again by isometry of $\pi(\nu)$, we see that $\Vert \pi(\nu)
  S^{-1/2} g \Vert ^2 = 1$ for all $\nu\in\Delta$. Hence
  $\gaborG{S^{-1/2}g}$ is a discrete Parseval frame
  whose elements have norm $1$, and thus it is actually an orthonormal basis. As $\gaborG{g}$ is the
  image of the orthonormal basis $\gaborG{S^{-1/2}g}$ under the bounded, invertible operator
  $S^{1/2}$, it follows that $\gaborG{g}$ is a Riesz basis for $L^2(G)$. Here, we tacitly used
  Lemma~\ref{thm:frame-op-commutes-time-freq}.
\end{proof}

Owing to Theorem~\ref{thm:density-frame-uniform} discrete Gabor frames $\gaborG{g}$, for which
$\vol{\Delta}=1$, are called \emph{critically sampled}.  Let us for a moment consider critically
sampled \emph{separable} Gabor systems that are systems of the form $\gaborG[\Lambda \times
\Lambda^\perp]{g} = \{E_{\gamma}T_{\lambda}g\}_{\lambda\in\Lambda,\gamma\in\Lambda^{\perp}}$ for
some closed subgroup $\Lambda$ of $G$. The following density result is a slightly stronger variant
of Theorem~\ref{thm:riesz-density} for the special case of separable critical sampling.

\begin{corollary} 
\label{cor:crit-sampled-implies-uniform-lattice}
Let $\LL$ be a closed subgroup of $G$, and let $g \in L^2(G)$. If
$\{E_{\gamma}T_{\lambda}g\}_{\lambda\in\Lambda,\gamma\in\Lambda^{\perp}}$ is a frame for $L^2(G)$,
then $\Lambda$ is a uniform lattice of $G$ and
$\{E_{\gamma}T_{\lambda}g\}_{\lambda\in\Lambda,\gamma\in\Lambda^{\perp}}$ is a Riesz basis.
\end{corollary}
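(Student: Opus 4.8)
The plan is to reduce the statement to Theorem~\ref{thm:riesz-density} by showing that, in the separable situation $\Delta = \Lambda\times\Lambda^{\perp}$, the frame hypothesis alone already forces $\Lambda$ to be a uniform lattice and, moreover, automatically gives $\vol{\Delta}=1$ (which is the point distinguishing this corollary from Theorem~\ref{thm:riesz-density}, where $\vol{\Delta}=1$ has to be assumed). First I would apply Theorem~\ref{th:frame-implies-cocompact}: since $\{E_{\gamma}T_{\lambda}g\}_{\lambda\in\Lambda,\gamma\in\Lambda^{\perp}}$ is a frame, the subgroup $\Delta=\Lambda\times\Lambda^{\perp}$ is co-compact, \ie $(G\times\ghat)/\Delta$ is compact. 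Under the topological isomorphism $(G\times\ghat)/(\Lambda\times\Lambda^{\perp})\cong (G/\Lambda)\times(\ghat/\Lambda^{\perp})$, compactness of the product is equivalent to compactness of both factors.

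Next I would translate these two compactness conditions into conditions on $\Lambda$ alone, using the annihilator relations from Section~\ref{sec:harmonic-ana}. Compactness of $G/\Lambda$ says exactly that $\Lambda$ is co-compact in $G$. Compactness of $\ghat/\Lambda^{\perp}$ says that $\Lambda^{\perp}$ is co-compact in $\ghat$; since $\widehat{\Lambda}\cong\ghat/\Lambda^{\perp}$, this is equivalent to $\widehat{\Lambda}$ being compact, hence to $\Lambda$ being discrete. Thus co-compactness of $\Delta$ is equivalent to $\Lambda$ being simultaneously discrete and co-compact, that is, a uniform lattice in $G$. This already establishes the first assertion of the corollary.

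It then remains to see that $\vol{\Delta}=1$ comes for free. Equipping $\Lambda$ and $\Lambda^{\perp}$ with the counting measure, the measure on $\Delta=\Lambda\times\Lambda^{\perp}$ is the product (counting) measure, and applying Weil's formula separately in $G$ and in $\ghat$ shows that the quotient measure on $(G\times\ghat)/\Delta$ is the product $\mu_{G/\Lambda}\times\mu_{\ghat/\Lambda^{\perp}}$; hence $\vol{\Delta}=\vol{\Lambda}\,\vol{\Lambda^{\perp}}$. Since $\Lambda$ is now a uniform lattice in $G$ and $G,\ghat$ carry dual measures, the covolume duality $\vol{\Lambda}\,\vol{\Lambda^{\perp}}=1$ (the analogue of Lemma~\ref{le:adjoint-lattice}(iv) for $\Lambda\subset G$, cf.~\cite{MR1601095}) gives $\vol{\Delta}=1$. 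With $\Delta$ a uniform lattice, $\vol{\Delta}=1$, and the system a frame, the ``if''-direction of Theorem~\ref{thm:riesz-density} yields that $\{E_{\gamma}T_{\lambda}g\}_{\lambda\in\Lambda,\gamma\in\Lambda^{\perp}}$ is a total Riesz family, \ie a Riesz basis for $L^2(G)$.

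The chain of equivalences in the first two paragraphs is routine once the annihilator dictionary is in place; the one point requiring care---and the main obstacle---is the measure bookkeeping in the third step. One must verify that the quotient measure on $(G\times\ghat)/\Delta$ genuinely factors as $\mu_{G/\Lambda}\times\mu_{\ghat/\Lambda^{\perp}}$, so that the two separate covolume-duality relations combine to give $\vol{\Delta}=1$ \emph{exactly} rather than up to an undetermined constant. Once the normalizations are pinned down by the dual-measure convention and Weil's formula applied coordinate-wise, everything lines up and the appeal to Theorem~\ref{thm:riesz-density} is immediate.
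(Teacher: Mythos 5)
Your proof is correct and follows essentially the same route as the paper's: Theorem~\ref{th:frame-implies-cocompact} forces $(G/\Lambda)\times(\ghat/\Lambda^{\perp})$ to be compact, the annihilator dictionary ($\ghat/\Lambda^{\perp}\cong\widehat{\Lambda}$ compact iff $\Lambda$ discrete) turns this into $\Lambda$ being a uniform lattice, and Theorem~\ref{thm:riesz-density} then yields the Riesz basis property. The only difference is that you make explicit the measure bookkeeping showing $\vol{\Lambda\times\Lambda^{\perp}}=\vol{\Lambda}\,\vol{\Lambda^{\perp}}=1$ under the counting-measure and dual-measure conventions, a step the paper leaves implicit when it invokes Theorem~\ref{thm:riesz-density} (it is the covolume duality behind the term ``critically sampled''), so this is added completeness rather than a different method.
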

\begin{proof} 
  By Theorem~\ref{th:frame-implies-cocompact} the quotient group $(G\times
  \ghat)/(\Lambda\times\Lambda^{\perp})\cong(G/\Lambda \times \ghat/\Lambda^{\perp})$ has to be
  compact. This only happens if both $G/\Lambda$ and $\ghat/\Lambda^{\perp}$ are compact. Hence
  $\Lambda$ is a co-compact subgroup of $G$ and $\ghat/\Lambda^{\perp}\cong \widehat\Lambda$ is
  compact in $\ghat$. The latter conclusion implies that $\Lambda$ is discrete. Thus $\Lambda$ is a
  uniform lattice. The fact that
  $\{E_{\gamma}T_{\lambda}g\}_{\lambda\in\Lambda,\gamma\in\Lambda^{\perp}}$ is a Riesz basis now
  follows from Theorem~\ref{thm:riesz-density}. 
\end{proof}

From Corollary~\ref{cor:crit-sampled-implies-uniform-lattice} we see that if there are no uniform
lattices in $G$, then there do not exist any separable, critically sampled Gabor frames for
$L^2(G)$.  For the Pr\"ufer $p$-group $G=\Z(p^\infty)$ the only uniform lattice is the Pr\"ufer
$p$-group itself, therefore there is only one type of critically sampled Gabor system, namely
$\{T_{\lambda}g\}_{\lambda\in \Z(p^\infty)}$.

Corollary~\ref{cor:crit-sampled-implies-uniform-lattice} has the following direct implications.
\begin{corollary} 
\label{cor:TI-frame-imply-discrete}
Let $g\in L^2(G)$. \begin{enumerate}[(i)]
\item If $\{T_{\lambda}g\}_{\lambda\in G}$ is a frame for $L^2(G)$, then $G$ is discrete. 
\item If $\{E_{\gamma}g\}_{\gamma\in \ghat}$ is a frame for $L^2(G)$, then $G$ is compact.
\end{enumerate}
\end{corollary}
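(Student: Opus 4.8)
The plan is to obtain both parts as immediate specializations of Corollary~\ref{cor:crit-sampled-implies-uniform-lattice}, applied to the two extreme closed subgroups of $G$, namely $\Lambda = G$ and $\Lambda = \{e\}$. The point is that in each case the separable system $\{E_{\gamma}T_{\lambda}g\}_{\lambda\in\Lambda,\gamma\in\Lambda^{\perp}}$ degenerates into a pure translation system or a pure modulation system, after which the conclusion ``$\Lambda$ is a uniform lattice'' forces the asserted topological property of $G$.

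For (i), I would take $\Lambda = G$. Its annihilator in $\ghat$ consists of the characters that are trivial on all of $G$, so $\Lambda^{\perp} = \{e\}$; since $E_{\gamma}$ is the identity operator for the single index $\gamma = e$, the system $\{E_{\gamma}T_{\lambda}g\}_{\lambda\in\Lambda,\gamma\in\Lambda^{\perp}}$ collapses to $\{T_{\lambda}g\}_{\lambda\in G}$. Assuming this is a frame, Corollary~\ref{cor:crit-sampled-implies-uniform-lattice} yields that $\Lambda = G$ is a uniform lattice in $G$, i.e.\ discrete and co-compact; as $\Lambda = G$, discreteness of $\Lambda$ is exactly discreteness of $G$. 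For (ii), I would symmetrically take $\Lambda = \{e\}$. Every character is trivial on $\{e\}$, so $\Lambda^{\perp} = \ghat$, and since $T_{\lambda}$ is the identity for $\lambda = e$, the system reduces to $\{E_{\gamma}g\}_{\gamma\in\ghat}$. If this is a frame, Corollary~\ref{cor:crit-sampled-implies-uniform-lattice} gives that $\{e\}$ is a uniform lattice in $G$; its co-compactness means $G/\{e\} \cong G$ is compact, which is the claim.

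I do not expect any real obstacle here: both statements are bookkeeping about the annihilator identities $G^{\perp} = \{e\}$ and $\{e\}^{\perp} = \ghat$ together with the observation that $E_{e}$ and $T_{e}$ are the identity. The only mild care needed is to confirm that $G$ and $\{e\}$ are indeed closed subgroups (trivially true) so that Corollary~\ref{cor:crit-sampled-implies-uniform-lattice} applies. As an alternative derivation of (ii), one could conjugate by the Fourier transform, which intertwines modulations on $G$ with translations on $\ghat$, reduce to part (i) applied on the dual group $\ghat$, and then invoke Pontryagin duality (a group is compact precisely when its dual is discrete); but the direct specialization above is the shortest route and is the one I would write out.
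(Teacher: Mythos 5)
Your proof is correct and is essentially identical to the paper's: the paper presents this corollary as a ``direct implication'' of Corollary~\ref{cor:crit-sampled-implies-uniform-lattice}, obtained precisely by the two specializations $\Lambda=G$ (with $\Lambda^{\perp}=\{e\}$, so the system collapses to $\{T_{\lambda}g\}_{\lambda\in G}$ and discreteness of the uniform lattice $G$ gives (i)) and $\Lambda=\{e\}$ (with $\Lambda^{\perp}=\ghat$, so co-compactness of $\{e\}$ gives (ii)). Your bookkeeping of the annihilator identities and the trivial operators $E_{e}$, $T_{e}$ is exactly what the paper leaves implicit, and no further argument is needed.
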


Let us end this section with commenting on yet another difference between discrete and non-discrete
Gabor systems. For a full-rank lattice $\Delta$ in $\R^{2n}$, Bekka~\cite{MR2078261} proved (using
von Neumann algebra techniques) that there exists $g\in L^2(\R^n)$ so that $\gaborG{g}$ is a frame
if, and only if, there exists $g\in L^2(\R^n)$ so that $\gaborG{g}$ is total, i.e., the linear span of
the functions in $\gaborG{g}$ is dense in $L^2(\R^n)$. This equivalence is not true for non-discrete
Gabor systems, e.g., take $\Delta=\R^{n} \times \{0\}^n$. Then $\gabor{g}=\set{T_\lambda g}_{\lambda
  \in \R^n}$, and it follows from Corollary~\ref{cor:TI-frame-imply-discrete} that no $g \in
L^2(\R)$ can generate a Gabor frame since $\R^n$ is not
discrete, see also \cite{MR1721808}. 
However, for any function $g$ such that $\hat{g}(\omega)\neq 0$ for a.e.\ $\omega \in \widehat{\R}^n$,
we see that
\[ 
0=\innerprod{T_\lambda g}{f} = \innerprods{E_{-\lambda} \hat{g}}{\hat{f}} =
\cF^{-1}(\hat{g}\bar{\hat{f}})(-\lambda) \quad
\text{for all } \lambda \in \R
\]
implies that $f=0$, hence $\set{T_\lambda g}_{\lambda \in \R^n}$ is total.
This argument obviously also works for $\Delta=G \times 
\{0\} \subset G \times \ghat$. In general, co-compactness of $\Delta \subset G \times \ghat$ is not necessary for $\gaborG{g}$ to
be total.  


%

\section{Duality results}
\label{sec:duality}

To simplify the formulation of the duality result and to avoid working
with infinite subgroup sizes, we introduce the following variant of
$\vol{\Delta}$:
\[ 
 \volto{\Delta} = \begin{cases} \vol{\Delta}=\int_{(G\times\ghat)/\Delta} 1 \,
     d\dot{\chi} & \text{if $(G\times\ghat)/\Delta$ is compact,} \\ 1 &
     \text{otherwise.}\end{cases}  
\]
The precise value of  $\volto{\Delta}$ for non-co-compact subgroups
$\Delta$ is not important as we just need that $\volto{\Delta}$ is
finite for all closed subgroups. 

\subsection{The Wexler-Raz biorthogonality relations}
\begin{theorem} \label{th:wex-raz-nonsep} Let $\Delta$ be a closed subgroup of
  $G\times\ghat$, and let $g,h\in L^2(G)$. Suppose that $\gaborG{g}$ and  $\gaborG{h}$ are Bessel systems. Then the following statements are equivalent:
\begin{enumerate}[(i)]
\item $\gaborG{g}$ and  $\gaborG{h}$ are dual frames,
\item $\langle h,\pi(\mu)g\rangle = \volto{\Delta} \delta_{\mu,e} \ \, \text{for all } \mu\in \Delta^{\circ}$.
\end{enumerate} 
If either and hence both of the assertions hold, then $(G\times\ghat)/\Delta$ is compact. 
\end{theorem}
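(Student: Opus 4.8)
The plan is to prove the two implications separately and to extract co-compactness of $Q:=(G\times\ghat)/\Delta$ from the second one. For fixed $f_1,f_2\in L^2(G)$ I would work with the function
\[
\varphi(\chi)=\int_{\Delta}\innerprod{\pi(\chi)^* f_1}{\pi(\nu)g}\innerprod{\pi(\nu)h}{\pi(\chi)^* f_2}\,d\nu ,
\]
which by Lemma~\ref{pr:gabor-fundamental-function-bounded} is continuous, $\Delta$-periodic and bounded, and which therefore descends to a continuous function on $Q$; applying Weil's formula to the $L^1(G\times\ghat)$-function $\chi\mapsto\innerprod{f_1}{\pi(\chi)g}\innerprod{\pi(\chi)h}{f_2}$ (integrable by Lemma~\ref{le:stft-norm-preserving}) shows in addition that $\varphi\in L^1(Q)$. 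The engine of the argument is Lemma~\ref{le:wex-raz-proof-inner-product-rewrite}, which I read as
\[
\int_{Q}\psi_\mu(\chi)\,\varphi(\chi)\,d\dot\chi=\innerprod{h}{\pi(\mu)g}\,\innerprod{\pi(\mu)f_1}{f_2},\qquad\mu\in\Delta^{\circ},
\]
where $\psi_\mu(\chi)=\overline{\omega(\alpha)}\beta(x)$ is the character of $Q$ attached to $\mu=(\alpha,\beta)$; as $\mu$ runs over $\Delta^{\circ}$ these exhaust $\widehat{Q}\cong\Delta^{\perp}\cong\Delta^{\circ}$.

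For (i)$\Rightarrow$(ii) I would simply note that dual frames are frames, so $\gaborG{g}$ is a frame and Theorem~\ref{th:frame-implies-cocompact} forces $Q$ to be compact; then $\volto{\Delta}=\vol{\Delta}$, and Lemma~\ref{th:biorth-identity} yields $\innerprod{h}{\pi(\mu)g}=\volto{\Delta}\delta_{\mu,e}$. In particular this direction already gives compactness of $Q$.

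For (ii)$\Rightarrow$(i) the crux is to recover co-compactness, and this is where I expect the real obstacle. Substituting (ii) into the displayed identity shows that the integral of $\varphi$ against every non-trivial character of $Q$ vanishes, while against the trivial character it equals $\volto{\Delta}\innerprod{f_1}{f_2}$; equivalently, the Fourier transform $\widehat\varphi$ on $\widehat{Q}$ is supported at the trivial character. Since $\varphi\in L^1(Q)$, the function $\widehat\varphi$ is continuous on $\widehat{Q}$. If $Q$ were non-compact, then $\widehat{Q}$ would be non-discrete, so the trivial character would fail to be isolated, and continuity would force $\widehat\varphi$ to vanish there too; hence $\widehat\varphi\equiv 0$ and $\varphi\equiv 0$. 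Taking $f_1=f_2=f$ and $\mu=e$, and using $\volto{\Delta}=1$ in the non-compact case, this gives $\norm{f}^2=\int_Q\varphi=0$ for every $f$, a contradiction. Therefore $Q$ is compact, which proves the final assertion of the theorem and gives $\volto{\Delta}=\vol{\Delta}$.

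Finally, knowing that $Q$ is compact, the characters $\{\psi_\mu\}_{\mu\in\Delta^{\circ}}$ form an orthogonal basis of $L^2(Q)$, and the continuous function $\varphi$ has all of its Fourier coefficients zero except against the trivial character. Hence $\varphi$ is constant, equal to $\vol{\Delta}^{-1}\int_Q\varphi=\innerprod{f_1}{f_2}$. Evaluating at $\chi=e$ gives $\int_{\Delta}\innerprod{f_1}{\pi(\nu)g}\innerprod{\pi(\nu)h}{f_2}\,d\nu=\innerprod{f_1}{f_2}$ for all $f_1,f_2$, i.e.\ $S_{g,h}$ is the identity, so $\gaborG{g}$ and $\gaborG{h}$ are dual frames. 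The only genuinely delicate point is the continuity argument for $\widehat\varphi$ that forces co-compactness; the rest is bookkeeping with Weil's formula and the identification $\widehat{Q}\cong\Delta^{\circ}$.
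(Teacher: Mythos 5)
Your proof is correct, and in two of its three components it coincides with the paper's own argument: the direction (i)$\Rightarrow$(ii) via Theorem~\ref{th:frame-implies-cocompact} and Lemma~\ref{th:biorth-identity} is identical, and your endgame in the co-compact case --- reading Lemma~\ref{le:wex-raz-proof-inner-product-rewrite} as computing the Fourier coefficients of the continuous, bounded, $\Delta$-periodic function $\varphi$, noting that hypothesis (ii) kills all coefficients except the trivial one, expanding $\varphi$ in the character basis of $L^2((G\times\ghat)/\Delta)$ indexed by $\Delta^{\circ}$, upgrading the a.e.\ identity $\varphi = \innerprod{f_1}{f_2}$ to a pointwise one by continuity, and evaluating at $\chi=e$ --- is exactly the paper's proof. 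The genuine divergence is in how you extract co-compactness from (ii). The paper argues by cardinality: if $\Delta$ is not co-compact, then $\Delta^{\circ}$ is non-discrete, hence uncountable, and (ii) would produce a biorthogonal system $\set{\pi(\mu)g}_{\mu\in\Delta^{\circ}}$, $\set{\pi(\mu)h}_{\mu\in\Delta^{\circ}}$ indexed by an uncountable set, which is impossible in the separable space $L^2(G)$. You instead observe that $\varphi\in L^1((G\times\ghat)/\Delta)$ (via Weil's formula and Lemma~\ref{le:stft-norm-preserving}(ii) --- a fact the paper itself records at the start of its Janssen representation subsection), so that $\widehat{\varphi}$ is continuous on the dual of the quotient by Riemann--Lebesgue; hypothesis (ii) then says $\widehat{\varphi}$ is supported at the trivial character, and if the quotient were non-compact its dual would be non-discrete, the trivial character would be non-isolated, continuity would force $\widehat{\varphi}\equiv 0$, hence $\varphi=0$ a.e.\ by $L^1$-uniqueness of the Fourier transform --- contradicting $\int\varphi = \volto{\Delta}\norm{f}^2 \neq 0$ for $f_1=f_2=f\neq 0$. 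All steps of this alternative check out, including the identification of $\set{\psi_\mu}_{\mu\in\Delta^\circ}$ with the full dual of the quotient, which the paper supplies via $\Phi(\Delta^{\circ})=\Delta^{\perp}\cong((G\times\ghat)/\Delta)^{\widehat{\ \ }}$. What each route buys: the paper's cardinality argument is shorter and needs no integrability of $\varphi$ over the quotient, but it leans on separability of $L^2(G)$ (inherited from second countability of $G$); your argument stays entirely inside the $L^1$-Fourier machinery already assembled for the main implication, does not use separability, and makes the mechanism explicit --- a nonzero continuous Fourier transform concentrated at a single point forces the dual group to be discrete, i.e.\ the quotient to be compact.
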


\begin{proof}
Assume that $\gaborG{g}$ and  $\gaborG{h}$ are dual frames for $L^2(G)$. By Theorem~\ref{th:frame-implies-cocompact} this implies that $\Delta$ is co-compact. It follows by Lemma~\ref{th:biorth-identity} that 
\[ \langle h,\pi(\mu)g\rangle = \vol{\Delta} \, \delta_{\mu,e} \quad \text{for all } \mu\in \Delta^{\circ}.\]

Assume now that $\langle h, \pi(\mu)g\rangle = \volto{\Delta} \delta_{\mu,e}$ for $\mu\in
\Delta^{\circ}$. Suppose $\Delta$ is not co-compact. Then the cardinality of $\Delta^{\circ}$ is
uncountable. However, this contradicts the assumption that $h$ and $\pi(\mu)g$ are biorthogonal for each $\mu\in
\Delta^{\circ}$ since $L^2(G)$ is separable. Thus, the subgroup $\Delta$ is co-compact.
By Lemma~\ref{le:wex-raz-proof-inner-product-rewrite} we have
\begin{equation}
\label{eq:wex-raz-fourier-series-coeff} \vol{\Delta} \, \delta_{\mu,e} \langle \pi(\mu) f_{1},f_{2}\rangle =
\int_{(G\times \ghat)/\Delta} \overline{\omega(\alpha)} \beta(x)  \varphi(\chi)  d\dot\chi,
\end{equation}
where $\varphi(\chi)=\int_{\Delta} \langle \pi(\chi)^* f_{1}, \pi(\nu)g\rangle\langle \pi(\nu) h,
\pi(\chi)^* f_{2}\rangle \, d\nu$ and $f_{1},f_{2} \in L^2(G)$ are arbitrary. 
By Lemma~\ref{pr:gabor-fundamental-function-bounded} the mapping $\varphi$
 is a bounded function on the 
compact domain $(G\times \ghat)/\Delta$. It therefore has a Fourier 
series indexed by the dual group of $(G\times\ghat)/\Delta$, which is 
topologically isomorphic to the discrete group $\Delta^{\circ}$. The right hand side of
equation~\eqref{eq:wex-raz-fourier-series-coeff} are the Fourier coefficients of $\varphi$. 
Indeed, by assumption, all but one are identically zero. We thus have 
the Fourier series expansion
\begin{equation} \label{eq:wex-raz-fourier-series-almost-all}
\varphi(\chi)= \vol{\Delta}^{-1} \sum_{\mu\in\Delta^{\circ}}  \vol{\Delta} \, \delta_{\mu,e} \langle \pi(\mu) f_{1},f_{2}\rangle = \langle f_{1}, f_{2}\rangle,
\end{equation}
which holds for almost all $\chi\in G\times\ghat$.
Moreover, by Lemma~\ref{pr:gabor-fundamental-function-bounded} the function $\varphi$ is
continuous. Hence, equality \eqref{eq:wex-raz-fourier-series-almost-all} holds pointwise; in particular, for
$\chi=e$, it yields 
\begin{equation*} 
\varphi(e)=\int_{\Delta} \langle f_{1}, \pi(\nu)g\rangle\langle \pi(\nu) h, f_{2}\rangle \, d\nu = \langle f_{1}, f_{2}\rangle.
\end{equation*}
Thus $\{\pi(\nu)g\}_{\nu\in\Delta}$ and $\{\pi(\nu)h\}_{\nu\in\Delta}$ are dual frames for $L^2(G)$. 
\end{proof}

\subsection{The Janssen representation}

For any closed subgroup $\Delta$ in $G\times\ghat$ Lemma~\ref{le:wex-raz-proof-inner-product-rewrite} states that the Fourier transform of
the $\Delta$-periodic function 
\[ \varphi: G\times\ghat \to \C , \ \chi \mapsto \int_{\Delta} \langle \pi(\chi)^* f_1, \pi(\nu)g\rangle\langle \pi(\nu) h, \pi(\chi)^* f_2\rangle \, d\nu\]
is given by
\[ \hat\varphi(\mu) = \langle h, \pi(\mu) g\rangle \langle \pi(\mu) f_1,f_2\rangle , \ \ \mu\in\Delta^{\circ}.\]
Indeed, $\varphi\in L^1((G\times\ghat)/\Delta)$
since
\[ \int_{(G\times\ghat)/\Delta} \Big\vert \int_{\Delta} \langle \pi(\chi)^* f_1, \pi(\nu)g\rangle\langle \pi(\nu) h, \pi(\chi)^* f_2\rangle \, d\nu \, \Big\vert \, d\dot\chi \le \Vert f_1 \Vert \,\Vert f_2 \Vert \,\Vert g \Vert \,\Vert h \Vert .\]
Using the Fourier inversion formula, we then recover the \emph{fundamental
  identity in Gabor analysis} \eqref{eq:Janssen-rep-function-eq2} for Gabor systems in $L^2(G)$ by Rieffel~\cite{MR941652}. 
\begin{theorem} \label{th:janssen-fourier-inversion} 
Let $\Delta$ be any closed subgroup of $G\times\ghat$, and define
$\varphi\in L^1((G\times\ghat)/\Delta)$ as above.
Assume that $\hat\varphi \in L^1(\Delta^{\circ})$. For each
$\mu=(\alpha,\beta)\in \Delta^{\circ}$ we have:
\begin{equation} \label{eq:Janssen-rep-function} 
\int_{\Delta} \langle \pi(\chi)^* f_1, \pi(\nu)g\rangle\langle 
\pi(\nu) h, \pi(\chi)^* f_2\rangle \, d\nu = 
\int_{\Delta^{\circ}} \omega(\alpha) \overline{\beta(x)} \langle h, 
\pi(\mu) g\rangle \langle \pi(\mu) f_1,f_2\rangle \, d\mu
\end{equation}
for almost every $\chi = (x,\omega)\in G\times\ghat$.
If, furthermore, $\varphi$ is continuous, then the inversion
formula~\eqref{eq:Janssen-rep-function} holds pointwise; for
$\mu=\chi=(e_G,e_{\ghat})$ we find:
\begin{equation} \label{eq:Janssen-rep-function-eq2} 
\int_{\Delta} \langle f_1, \pi(\nu)g\rangle\langle 
\pi(\nu) h, f_2\rangle \, d\nu = 
\int_{\Delta^{\circ}} \langle h, 
\pi(\mu) g\rangle \langle \pi(\mu) f_1,f_2\rangle \, d\mu.
\end{equation}
\end{theorem}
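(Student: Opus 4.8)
The plan is to recognise the whole statement as the Fourier inversion theorem on the locally compact abelian quotient group $Q:=(G\times\ghat)/\Delta$, whose dual group is $\widehat{Q}\cong\Delta^{\perp}\cong\Delta^{\circ}$. The preamble has already arranged the two facts that make this work: first, $\varphi$ descends to a function in $L^1(Q)$ (it is $\Delta$-periodic and is dominated, through Weil's formula, by the $L^1(G\times\ghat)$-function $\chi\mapsto\langle f_1,\pi(\chi)g\rangle\langle\pi(\chi)h,f_2\rangle$ of Lemma~\ref{le:stft-norm-preserving}(ii)); second, Lemma~\ref{le:wex-raz-proof-inner-product-rewrite} already computes the Fourier coefficients of $\varphi$. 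Indeed, the identity
\[ \langle h,\pi(\mu)g\rangle\langle\pi(\mu)f_1,f_2\rangle=\int_{Q}\overline{\omega(\alpha)}\beta(x)\,\varphi(\dot\chi)\,d\dot\chi \]
is exactly the statement $\hat\varphi(\mu)=\langle h,\pi(\mu)g\rangle\langle\pi(\mu)f_1,f_2\rangle$, so the theorem is nothing but the inverse transform of this relation.

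To turn this into a proof I would first fix the duality and the measures precisely. Using $\widehat{Q}\cong\Delta^{\perp}$ together with $\Phi(\Delta^{\circ})=\Delta^{\perp}$ from Section~\ref{sec:harmonic-ana}, I identify $\widehat{Q}$ with $\Delta^{\circ}$, so that $\mu=(\alpha,\beta)\in\Delta^{\circ}$ acts on $Q$ as the character $\psi_\mu(\dot\chi)=\omega(\alpha)\overline{\beta(x)}$, $\chi=(x,\omega)$; that $\psi_\mu$ is well defined on cosets of $\Delta$ is precisely the $\Delta^{\circ}$-condition, and its continuity and multiplicativity are the properties already used in the proof of Lemma~\ref{th:biorth-identity}. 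With the paper's convention $\cF\varphi(\mu)=\int_Q\varphi\,\overline{\psi_\mu}\,d\dot\chi$ one has $\overline{\psi_\mu(\dot\chi)}=\overline{\omega(\alpha)}\beta(x)$, so the displayed identity is literally $\hat\varphi(\mu)=\langle h,\pi(\mu)g\rangle\langle\pi(\mu)f_1,f_2\rangle$. I then equip $\Delta^{\circ}$ with the Haar measure dual (under $\Phi$) to the Weil-normalised $\mu_{(G\times\ghat)/\Delta}$, so that $\mu_Q$ and this measure are Plancherel-normalised dual measures. Since $\hat\varphi\in L^1(\Delta^{\circ})$ by hypothesis, the Fourier inversion theorem on $Q$ gives, for almost every $\chi$,
\[ \varphi(\dot\chi)=\int_{\Delta^{\circ}}\hat\varphi(\mu)\,\psi_\mu(\dot\chi)\,d\mu=\int_{\Delta^{\circ}}\omega(\alpha)\overline{\beta(x)}\,\langle h,\pi(\mu)g\rangle\langle\pi(\mu)f_1,f_2\rangle\,d\mu, \]
and unfolding $\varphi(\dot\chi)=\int_\Delta\langle\pi(\chi)^*f_1,\pi(\nu)g\rangle\langle\pi(\nu)h,\pi(\chi)^*f_2\rangle\,d\nu$ gives exactly \eqref{eq:Janssen-rep-function}.

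For the final assertion I would invoke the pointwise inversion recorded in Section~\ref{sec:harmonic-ana}: if $\varphi$ is continuous and $\hat\varphi\in L^1$, the inversion formula holds for every $\chi$ rather than almost everywhere. Evaluating \eqref{eq:Janssen-rep-function} at $\chi=(e_G,e_{\ghat})$ then collapses every phase factor $\omega(\alpha)\overline{\beta(x)}$ to $1$ and replaces $\pi(\chi)^*$ by the identity, producing \eqref{eq:Janssen-rep-function-eq2}.

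I expect the only real difficulty to lie in the measure-theoretic bookkeeping rather than in any estimate: the $L^1$-bound and the Fourier coefficients are handed to us by Lemmas~\ref{le:stft-norm-preserving} and~\ref{le:wex-raz-proof-inner-product-rewrite}. The delicate point is to certify that the Haar measure placed on $\Delta^{\circ}$ in the statement is genuinely the dual measure of the Weil-normalised $\mu_{(G\times\ghat)/\Delta}$ under $\Delta^{\circ}\cong\Delta^{\perp}\cong\widehat{Q}$; this is the compatibility of Weil normalisations with Pontryagin duality, and it is what guarantees that no spurious constant appears in \eqref{eq:Janssen-rep-function}. I would also keep the conjugation convention under tight control, since flipping $\psi_\mu$ for $\overline{\psi_\mu}$ would misplace the complex conjugates in the kernel and break the match with the lemma.
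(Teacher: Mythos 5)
Your proposal is correct and is essentially the paper's own argument: the paper proves Theorem~\ref{th:janssen-fourier-inversion} implicitly in the paragraph preceding it, by noting that $\varphi\in L^1((G\times\ghat)/\Delta)$ via the bound from Lemma~\ref{le:stft-norm-preserving} and Weil's formula, reading Lemma~\ref{le:wex-raz-proof-inner-product-rewrite} as the computation $\hat\varphi(\mu)=\langle h,\pi(\mu)g\rangle\langle\pi(\mu)f_1,f_2\rangle$ on the dual group $\Delta^{\circ}$ of the quotient, and then invoking Fourier inversion (almost everywhere, and pointwise when $\varphi$ is continuous). Your additional care with the identification $\widehat{(G\times\ghat)/\Delta}\cong\Delta^{\circ}$, the dual-measure normalisation, and the conjugation convention fills in exactly the bookkeeping the paper leaves tacit, and your check that well-definedness of $\psi_\mu$ on cosets is the $\Delta^{\circ}$-commutation condition is sound.
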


\begin{corollary} \label{cor:janssen-rep}
Let $\Delta$ be a closed subgroup of $G\times \ghat$, and let $g,h\in L^2(G)$. Suppose
that $\gaborG{g}$ and $\gaborG{h}$ are Bessel systems. If the functions
$g,h$ and $f_1,f_2\in L^2(G)$ satisfy 
\begin{equation} \label{eq:cond-A} 
\int_{\Delta^{\circ}} \vert \langle h, \pi(\mu)g\rangle \langle \pi(\mu)f_1,f_2\rangle \vert \, d\mu <
\infty,
\end{equation}
then
\begin{equation} 
\label{eq:Janssen-rep} 
\langle S_{g,h} f_1,f_2\rangle = \int_{\Delta} \langle f_1, \pi(\nu)g\rangle\langle 
\pi(\nu) h, f_2\rangle \, d\nu = 
\int_{\Delta^{\circ}} \langle h, 
\pi(\mu) g\rangle \langle \pi(\mu) f_1,f_2\rangle \, d\mu.
\end{equation}
\end{corollary}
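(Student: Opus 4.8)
The plan is to derive this corollary directly from Theorem~\ref{th:janssen-fourier-inversion}, so that the work consists almost entirely of verifying that its hypotheses are met. First, the leftmost equality $\langle S_{g,h} f_1,f_2\rangle = \int_{\Delta} \langle f_1, \pi(\nu)g\rangle\langle \pi(\nu) h, f_2\rangle \, d\nu$ is immediate: it is just the definition of the mixed frame operator $S_{g,h} = \int_{\Delta} \langle \,\cdot\,, \pi(\nu)g\rangle \pi(\nu)h \, d\nu$ paired weakly against $f_2$, and the integral converges absolutely since Lemma~\ref{pr:gabor-fundamental-function-bounded}, applied with $\chi=e$, bounds it by $B_g^{1/2} B_h^{1/2} \Vert f_1 \Vert \Vert f_2\Vert$ (this is exactly where the Bessel hypotheses on $\gaborG{g}$ and $\gaborG{h}$ enter).

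For the substantive second equality I would introduce the $\Delta$-periodic function from Theorem~\ref{th:janssen-fourier-inversion}, namely $\varphi(\chi) = \int_{\Delta} \langle \pi(\chi)^* f_1, \pi(\nu)g\rangle\langle \pi(\nu) h, \pi(\chi)^* f_2\rangle \, d\nu$. By the computation recorded just before that theorem, $\varphi$ always lies in $L^1((G\times\ghat)/\Delta)$, and by Lemma~\ref{pr:gabor-fundamental-function-bounded} it is moreover continuous and bounded on the relevant domain. The crucial observation, also noted before the theorem, is that its Fourier transform is $\hat\varphi(\mu) = \langle h, \pi(\mu) g\rangle \langle \pi(\mu) f_1,f_2\rangle$ for $\mu\in\Delta^{\circ}$; hence the integrability hypothesis \eqref{eq:cond-A} is precisely the statement $\hat\varphi \in L^1(\Delta^{\circ})$ demanded by Theorem~\ref{th:janssen-fourier-inversion}.

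With $\varphi \in L^1((G\times\ghat)/\Delta)$ continuous and $\hat\varphi \in L^1(\Delta^{\circ})$ in hand, Theorem~\ref{th:janssen-fourier-inversion} applies, and its pointwise inversion formula \eqref{eq:Janssen-rep-function-eq2}, read off at $\chi = \mu = (e_G,e_{\ghat})$, is exactly $\int_{\Delta} \langle f_1, \pi(\nu)g\rangle\langle \pi(\nu) h, f_2\rangle \, d\nu = \int_{\Delta^{\circ}} \langle h, \pi(\mu) g\rangle \langle \pi(\mu) f_1,f_2\rangle \, d\mu$. Chaining this with the definitional equality for $S_{g,h}$ from the first step yields the full chain \eqref{eq:Janssen-rep}, completing the proof.

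The only point requiring genuine care, and what I would flag as the main (though modest) obstacle, is bookkeeping about the two $L^1$-conditions: one must identify the integrand in \eqref{eq:cond-A} with $\hat\varphi$ via the Fourier-coefficient identity preceding the theorem, and one must invoke continuity of $\varphi$ to upgrade the almost-everywhere inversion of Theorem~\ref{th:janssen-fourier-inversion} to a pointwise statement that can be evaluated at the identity. Both of these are supplied by Lemma~\ref{pr:gabor-fundamental-function-bounded} together with the Bessel assumptions, so no estimate beyond those already established is needed.
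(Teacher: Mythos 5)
Your proposal is correct and takes essentially the same route as the paper, whose entire proof is the observation that the Bessel assumptions together with Lemma~\ref{pr:gabor-fundamental-function-bounded} (continuity and boundedness of $\varphi$) and hypothesis \eqref{eq:cond-A} (which is exactly $\hat\varphi \in L^1(\Delta^{\circ})$ via the Fourier-coefficient identity stated before Theorem~\ref{th:janssen-fourier-inversion}) verify the hypotheses of Theorem~\ref{th:janssen-fourier-inversion}, so that the pointwise formula \eqref{eq:Janssen-rep-function-eq2} yields \eqref{eq:Janssen-rep}. You merely spell out the bookkeeping---the weak definition of $S_{g,h}$ and the identification of the integrand in \eqref{eq:cond-A} with $\hat\varphi$---that the paper's one-sentence proof leaves implicit.
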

\begin{proof} 
  The Bessel assumption, Lemma~\ref{pr:gabor-fundamental-function-bounded} and \eqref{eq:cond-A}
  ensure that the conclusion \eqref{eq:Janssen-rep-function-eq2} of Theorem~\ref{th:janssen-fourier-inversion} holds.
\end{proof}

In \cite{MR2264211} the authors determine sufficient conditions on the functions $g,h,f_{1},f_{2}$
under which~\eqref{eq:Janssen-rep-function-eq2} holds. In particular, we mention
that~\eqref{eq:Janssen-rep-function-eq2} holds if $g,h\in L^2(G)$ and $f_1,f_2$ belong to the
Feichtinger algebra $S_0(G)$, cf.\ \cite{MR2264211} and Theorem~\ref{thm:frame-op-is-in-s0-for-g-in-s0} and
Corollary~\ref{cor:Bessel-system-with-g-in-s0} in the appendix.

Assume that $\Delta$ is a closed, co-compact subgroup of $G \times \ghat$. The measure
on $\Delta^{\circ}$ in the right hand side of \eqref{eq:Janssen-rep} is then given by $\vol{\Delta}^{-1}
\sum_{\mu\in\Delta^{\circ}}$. The pair $(g,\Delta)$ satisfies \emph{condition A} if
$\sum_{\mu\in\Delta^{\circ}} \vert \langle g, \pi(\mu) g\rangle \vert < \infty$. Now, if the Gabor
system $\{\pi(\nu)g\}_{\nu\in\Delta}$ is a Bessel family \emph{and} condition A holds, then
\eqref{eq:Janssen-rep} yields the \emph{Janssen representation} of the frame operator:
\[ 
S_{g} = \vol{\Delta}^{-1} \sum_{\mu\in\Delta^{\circ}} \langle g, \pi(\mu)g\rangle \pi(\mu)
\]
with absolute convergence in the (uniform) operator norm. 
 It follows from Proposition~\ref{prop:Vfg-in-s0}
 and the comments preceding Corollary \ref{cor:Bessel-system-with-g-in-s0} that any $g\in
S_{0}(G)$ satisfies condition A. 
The mixed frame operator $S_{g,h}$, $g,h\in L^2(G)$, has a similar Janssen representation. 

\subsection{The duality principle}

In this section we proof an extended version of the duality theorem for Gabor frames. The original
result on separable lattice Gabor systems on $L^2(\R^d)$ goes back to Daubechies, Landau and
Landau~\cite{MR1350701}, Janssen~\cite{MR1350700}, and Ron and Shen~\cite{MR1460623}. Our proof is
inspired by one direction of Janssen's proof; the important fact is that Janssen's computations
carry over from the setting of discrete, separable Gabor systems in $L^2(\R)$ to regular,
non-separable Gabor systems in $L^2(G)$. From this idea, we prove that for any closed
subgroup $\Delta$ in $G\times\ghat$ the Gabor system $\{\pi(\nu)g\}_{\nu\in\Delta}$ is a Bessel
system with bound $B$, if, and only if, $\{\pi(\mu)g\}_{\mu\in\Delta^{\circ}}$ is a Bessel system
with bound $B$. We remind the reader that a Gabor system is a Bessel system with bound $B$
\emph{with respect to} the measure on the associated time-frequency subgroup.  In case $\Delta$ is
co-compact, the measure on $\Delta^{\circ}$ is $\vol{\Delta}^{-1} \sum_{\mu\in\Delta^{\circ}}$, see
Remark~\ref{rem:group-size}, and the Bessel duality principle in Theorem~\ref{thm:Bessel-duality}
states that for $g\in L^2(G)$ and $B>0$ we have:
\[ 
\int_{\Delta} \vert \langle f, \pi(\nu)g\rangle \vert^2 \, d\nu \le B \,
\Vert f \Vert^2  \quad \text{if, and only if} \quad
\sum_{\mu\in\Delta^{\circ}} \vert \langle f, \pi(\mu)g\rangle \vert^2 \le
\vol{\Delta} B \, \Vert f \Vert^2 
\]
for all $f\in L^2(G)$.
When $\Delta$ is a full-rank lattice in $\R^{2n}$, the Bessel duality result is well-known, and the
result is stated in \cite[Proposition 3.5.10]{MR1601107}, albeit without bounds.
Note, however, that the Bessel duality principle is true for any
closed subgroup of $G \times \ghat$ and that neither co-compactness nor
discreteness is needed.
The generalized duality principle will then follow from the Bessel
duality principle and the Wexler-Raz biorthogonality relations using general frame theory. 

\begin{theorem}
\label{thm:Bessel-duality}
Let $\Delta$ be a closed subgroup of $G\times\ghat$, and let $g \in
L^2(G)$. Then $\gaborG{g}$ is a Bessel system with bound $B$ if, and only if, $\gaborG[\Delta^{\circ}]{g}$ is a Bessel system
with bound $B$.
\end{theorem}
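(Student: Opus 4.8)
The plan is to exploit the symmetry of the statement and then reduce the surviving implication to a norm bound for a single bounded operator. Since the adjoint operation is involutive, $(\Delta^{\circ})^{\circ}=\Delta$, and since the dual-measure normalization fixing the measure on $\Delta^{\circ}$ is itself symmetric (applying the construction once more to $\Delta^{\circ}$ recovers the original Haar measure on $\Delta$), the two assertions are mirror images. Hence it suffices to prove one implication, say that $\gaborG{g}$ being Bessel with bound $B$ forces $\gaborG[\Delta^{\circ}]{g}$ to be Bessel with bound $B$. I would first record this involutivity and measure symmetry as a preliminary remark, drawing on the material of Section~\ref{sec:harmonic-ana} and Lemma~\ref{le:adjoint-lattice}, so that the whole theorem follows from a single inequality; alternatively, both implications come out of the same computation.

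For the chosen direction I would pass to the synthesis formulation: $\gaborG[\Delta^{\circ}]{g}$ is Bessel with bound $B$ exactly when $\Vert \int_{\Delta^{\circ}} c(\mu)\,\pi(\mu)g\,d\mu\Vert^{2}\le B\,\Vert c\Vert_{L^{2}(\Delta^{\circ})}^{2}$ for all $c$, and by density it is enough to treat simple, compactly supported $c$ and, after a further density step using the Feichtinger algebra $S_{0}(G)$ from the appendix, windows $g\in S_{0}(G)$, where the Janssen representation converges absolutely. Writing $T_{c}=\int_{\Delta^{\circ}} c(\mu)\pi(\mu)\,d\mu$, the target becomes $\Vert T_{c}g\Vert^{2}=\langle T_{c}^{*}T_{c}\,g,g\rangle\le \vol{\Delta}\,B\,\Vert c\Vert^{2}$. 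The decisive structural fact is that, because $\pi(\mu)\pi(\nu)=\pi(\nu)\pi(\mu)$ for $\mu\in\Delta^{\circ}$ and $\nu\in\Delta$, the positive operator $T_{c}^{*}T_{c}$ commutes with every $\pi(\nu)$, $\nu\in\Delta$; moreover its expansion $T_{c}^{*}T_{c}=\int_{\Delta^{\circ}} d(\eta)\pi(\eta)\,d\eta$ has identity coefficient $d(e)=\Vert c\Vert^{2}$. Janssen's computation, which I would reproduce using only the short-time Fourier transform (Lemma~\ref{le:stft-norm-preserving}) and Weil's formula~\eqref{eq:weil-formula}, rewrites $\langle T_{c}^{*}T_{c}\,g,g\rangle$ so that the primal Bessel bound, i.e.\ $S_{g}\le B\,\id$ for the $\Delta$-frame operator, can be inserted: the $\Delta$-invariance of $T_{c}^{*}T_{c}$ lets one compare its pairing against $g$ with its pairing against a tight reference system, and the primal frame inequality supplies exactly the factor $B$.

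The hard part will be making this comparison rigorous within pure time--frequency analysis. Expanding $\Vert T_{c}g\Vert^{2}$ by the short-time Fourier transform and unfolding through Weil's formula converts it into $\Vert \sum_{\mu}c(\mu)\,\rho(\mu)F\Vert_{L^{2}(G\times\ghat)}^{2}$, where $F(\zeta)=\langle \pi(\zeta)g,g\rangle$ and $\rho$ is twisted translation along $\Delta^{\circ}$; the primal Bessel hypothesis is the identical statement for translates along $\Delta$. Thus the inequality says, at bottom, that the Gram operator of the $\Delta^{\circ}$-system and the frame operator of the $\Delta$-system have the same norm even though they are different operators assembled from the \emph{same} coefficients $\langle g,\pi(\eta)g\rangle$ --- the ``one symbol, two representations'' phenomenon. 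I expect this norm identity to be the crux: the conceptual reason is that these coefficients define an element of the (amenable) twisted convolution algebra of $\Delta^{\circ}$, whose norm is representation independent, and Janssen's explicit estimate is precisely the hands-on substitute for that fact, which is what allows the argument to bypass von Neumann algebra techniques. Once $\langle T_{c}^{*}T_{c}g,g\rangle\le \vol{\Delta}\,B\,\Vert c\Vert^{2}$ is secured for $g\in S_{0}(G)$ and simple $c$, I would remove the regularity assumptions by density and invoke the symmetry reduction to obtain both implications, noting finally that the computation uses neither co-compactness nor discreteness of $\Delta$, so it applies to every closed subgroup as claimed.
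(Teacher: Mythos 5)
Your symmetry reduction ($(\Delta^{\circ})^{\circ}=\Delta$ together with biduality of the measure normalization) is exactly the paper's first move, and you correctly name the computational engine (Janssen's computation, the fundamental identity \eqref{eq:Janssen-rep-function-eq2}, Weil's formula, and the orthogonality relations of Lemma~\ref{le:stft-norm-preserving}). The genuine gap is the step where you reduce ``by a further density step'' to windows $g\in S_{0}(G)$. The theorem is claimed for arbitrary $g\in L^{2}(G)$ with the \emph{same} bound $B$, and Bessel bounds over a proper closed subgroup are not controlled by the $L^{2}$-norm of the window: if $g_{n}\in S_{0}(G)$ with $g_{n}\to g$ in $L^{2}(G)$, the Bessel bound of $\gaborG{g_{n}-g}$ is merely finite (by Corollary~\ref{cor:Bessel-system-with-g-in-s0} plus the hypothesis), not small, so the hypothesis does not transfer to the approximants with bound close to $B$; a Fatou argument then only carries the inflated bounds back to $g$. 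Moreover, for general $g\in L^{2}(G)$ the absolutely convergent expansion of $T_{c}^{*}T_{c}$ in time-frequency shifts with coefficients built from $\langle g,\pi(\eta)g\rangle$ that your argument leans on requires Condition~A, which need not hold for $L^{2}$ windows. The paper's proof is engineered precisely to avoid this: $g$ stays in $L^{2}(G)$ throughout, and the $S_{0}$-regularity is placed on auxiliary test functions $f,h\in S_{0}(G)$. The $\Delta$-Bessel hypothesis is tested on the special coefficient $\varphi(\nu)=\langle \pi(\chi)h,\pi(\nu)f\rangle\in S_{0}(\Delta)$, the resulting inner $\Delta$-integral is converted by \eqref{eq:Janssen-rep-function-eq2} (valid for windows in $L^{2}$ and test functions in $S_{0}$, cf.\ the comments after Corollary~\ref{cor:janssen-rep}) into a pairing with $S_{g,\Delta^{\circ}}f$, and averaging over $(G\times\ghat)/\Delta$ via Weil's formula together with Lemma~\ref{le:stft-norm-preserving}(iii) and $\Vert h\Vert=1$ collapses everything to $\langle S_{g,\Delta^{\circ}}f,f\rangle \le B\,\Vert f\Vert^{2}$ for $f\in S_{0}(G)$; density is invoked only in $f$, where extending a quadratic-form bound is harmless.

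Beyond this, the pivotal inequality in your plan is asserted rather than derived: the claim that the Gram operator of the $\Delta^{\circ}$-system and the frame operator of the $\Delta$-system have the same norm because the coefficients define a single element of an amenable twisted convolution algebra is exactly the operator-algebraic fact the paper sets out to bypass, and your ``hands-on substitute'' defers to an unexecuted reproduction of Janssen's estimate --- but that estimate, in the paper's rendering, proceeds through the synthesis operator $D_{g,\Delta}$ applied to STFT-type coefficients, not through a norm comparison of $T_{c}^{*}T_{c}$ against a tight reference system, and the commutation of $T_{c}^{*}T_{c}$ with $\pi(\nu)$, $\nu\in\Delta$, plays no role in it. Finally, a normalization slip: your target $\Vert T_{c}g\Vert^{2}\le \vol{\Delta}\,B\,\Vert c\Vert^{2}$ only parses when $\Delta$ is co-compact, since otherwise $\vol{\Delta}=\infty$; the bound $B$ in the theorem is taken with respect to the canonical dual measure on $\Delta^{\circ}$ fixed by the paper's general setup, which exists for every closed subgroup and is what makes the statement uniform. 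As it stands, the proposal identifies the right ingredients but the window-regularization step fails and the crux inequality is not proved.
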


\begin{proof} 
By symmetry of the Bessel duality principle, we only have to prove one of the two
implications. We assume that $g \in L^2(G)$ and that $\{\pi(\nu)g\}_{\nu\in\Delta}$ is a
Bessel system with bound $B$. 
In other words, we assume that the analysis operator $C_{g,\Delta}: L^2(G) \to
L^2(\Delta)$ is bounded with operator bound $\sqrt B$. Therefore, its
adjoint, the synthesis operator $D_{g,\Delta}$ given weakly by
\[
 \innerprod{D_{g,\Delta} a}{h}  = \int_{\Delta} a(\nu) \langle \pi(\nu) g, h \rangle \,
 d\nu  \qquad \text{for all } h\in L^2(G)
\]
is also bounded by $\sqrt B\,$:
\begin{equation}
\label{eq:bessel-synth-assumption}
  \Vert D_{g,\Delta} \varphi \Vert_{L^2(G)}^2 \le B \, \Vert \varphi
  \Vert_{L^2(\Delta)}^2 \quad \text{for all } \varphi \in L^2(\Delta).
\end{equation}
For each $\chi\in G \times\widehat{G}$, we define $\varphi(\nu) =
\langle \pi(\chi) h, \pi(\nu) f \rangle,\, \nu\in\Delta$, where
$f,h\in S_0(G)$, normalized so that $\Vert h \Vert_{L^2(G)} = 1$. 
It follows from Proposition~\ref{prop:Vfg-in-s0} together with the comments preceding Corollary \ref{cor:Bessel-system-with-g-in-s0} that $\varphi \in S_{0}(\Delta)$.

By a change of variables and using the properties of the time-frequency shift operator we find that
\begin{align*}
 \Vert D_{g,\Delta} \varphi \Vert_{L^2(G)}^2 =& \innerprod{D_{g,\Delta} \varphi}{D_{g,\Delta} \varphi}  
 = \int_{\Delta} \varphi(\nu) \int_{\Delta} \langle \pi(\nu) g, \pi(\nu')g\rangle \, \overline{\varphi(\nu')} \, d\nu' \, d\nu \\
& = \int_{\Delta} \varphi(\nu) \int_{\Delta} \langle \pi(\nu) g, \pi(\nu')g\rangle \langle \pi(\nu') f, \pi(\chi) h\rangle \, d\nu' \, d\nu \\
& = \int_{\Delta} \varphi(\nu) \int_{\Delta} \langle \pi(\nu) g, \pi(\nu\nu')g\rangle \langle \pi(\nu\nu') f, \pi(\chi) h\rangle \, d\nu' \, d\nu \\
& = \int_{\Delta} \varphi(\nu) \int_{\Delta} \langle \pi(\nu) g, \pi(\nu)\pi(\nu')g\rangle  \langle \pi(\nu)\pi(\nu') f, \pi(\chi) h\rangle \, d\nu' \, d\nu \\
& = \int_{\Delta} \varphi(\nu) \int_{\Delta} \langle g, \pi(\nu')g\rangle \langle \pi(\nu') f, \pi(\nu)^* \pi(\chi) h\rangle \, d\nu' \, d\nu.
\end{align*}
Note that the order of integration can be interchanged by
Fubini's theorem since $\varphi\in S_{0}(\Delta)\subset L^1(\Delta)$.

Since $f$ and $h$ belong to $S_0(G)$ and $g\in L^2(G)$, the
fundamental equation in Gabor analysis
\eqref{eq:Janssen-rep-function-eq2} holds (see the comments following
Corollary~\ref{cor:janssen-rep}). Hence, 
\begin{align}
\label{eq:synthesis-op-after-FIGA}
 \Vert D_{g,\Delta} \varphi \Vert_{L^2(G)}^2 
 = \int_{\Delta} \varphi(\nu) \int_{\Delta^{\circ}} \langle f, \pi(\mu) g \rangle \langle \pi(\mu) g, \pi(\nu)^* \pi(\chi) h\rangle \, d\mu \, d\nu.  
\end{align}
For the adjoint system $\gabor[\Delta^{\circ}]{g}$ it follows from
Corollary~\ref{cor:Bessel-system-with-g-in-s0} that the frame operator
$S_{g,\Delta^{\circ}}$ is well-defined on the subspace $S_{0}(G)$ of
$L^2(G)$. Thus, since $f \in S_0(G)$,
we have by definition that
\[
\langle S_{g,\Delta^{\circ}} f,
\pi(\nu)^*\pi(\chi)h\rangle = \int_{\Delta^{\circ}} \langle f,
\pi(\mu) g \rangle \langle \pi(\mu) g, \pi(\nu)^* \pi(\chi) h\rangle
\, d\mu.
\]
Hence, we can continue~\eqref{eq:synthesis-op-after-FIGA}:
\begin{align*}
 \Vert D_{g,\Delta} \varphi \Vert_{L^2(G)}^2 & = \int_{\Delta} \langle S_{g,\Delta^{\circ}} f, \pi(\nu)^*\pi(\chi)h\rangle \langle \pi(\nu)^* \pi(\chi) h,  f \rangle \, d\nu\\
& = \int_{\Delta} \langle S_{g,\Delta^{\circ}} f, \pi(\nu\chi)h\rangle \langle \pi(\nu\chi) h,  f \rangle \, d\nu
\end{align*}
where we have also used \eqref{eq:cr2}, \eqref{eq:cr3}, and a change of variables from
$\nu^{-1}$ to $\nu$. 
Thus, with our choice of $\varphi$, the
inequality~\eqref{eq:bessel-synth-assumption} becomes
\[ 
\int_{\Delta} \langle S_{g,\Delta^{\circ}} f, \pi(\nu\chi)h\rangle
\langle \pi(\nu\chi) h,  f \rangle \, d\nu \le B \, \int_{\Delta}
\vert \langle \pi(\nu\chi) h,  f \rangle \vert^2 \, d\nu.
\]
Integrating over the quotient $(G\times\ghat)/\Delta$ yields that
\begin{align*}
 \int_{(G\times\ghat) / \Delta} \int_{\Delta} \langle
S_{g,\Delta^{\circ}} f, \pi(\nu\chi)h\rangle \langle \pi(\nu\chi) h,
f \rangle \, d\nu \, d\dot\chi & \le B \, \int_{(G\times\ghat) / \Delta} \int_{\Delta} \vert \langle \pi(\nu\chi) h,  f \rangle \vert^2 \, d\nu \, d\dot\chi, \\
\intertext{and further by Weil's formula that} 
 \int_{G\times\ghat} \langle S_{g,\Delta^{\circ}} f, \pi(\chi)h\rangle
 \langle \pi(\chi) h,  f \rangle \, d\chi & \le  B \, \int_{G\times\ghat} \vert \langle \pi(\chi) h,  f \rangle \vert^2 \, d\chi.
\end{align*}
Using the orthogonality relations of the short-time Fourier transform
in Lemma~\ref{le:stft-norm-preserving}(iii), we arrive at 
\[ 
 \int_{\Delta^{\circ}}
\vert \langle f, \pi(\mu) g\rangle \vert^2 \, d\mu = \langle S_{g,\Delta^{\circ}} f,  f \rangle \le  B \, \Vert f
\Vert^{2}
\]
for any $f\in S_0(G)$. Since $S_0(G)$ is dense in $L^2(G)$, 
we conclude that $\{\pi(\mu) g\}_{\mu\in\Delta^{\circ}}$ is a Bessel system with bound $B$. 
\end{proof}


The duality principle can now be proven using general frame theory; the proof strategy is similar to
the proof of the duality principle for separable, co-compact subgroups in \cite{JakobsenCocompact2014}.

\begin{theorem} \label{thm:duality-principle} Let $\Delta$ be a closed
  subgroup of $G\times\ghat$ and let $g\in L^2(G)$. Then the following
  statements are equivalent:
\begin{enumerate}[(i)]
\item $\gaborG{g}$ is a Gabor frame with bounds $A$ and $B$,
\item $\gaborG[\Delta^{\circ}]{g}$ is a basic Riesz family with bounds $\volto{\Delta} \, A$ and $\volto{\Delta} \, B$.
\end{enumerate}
If either and hence both of the assertions hold, then $(G\times\ghat)/ \Delta$ is compact. 
\end{theorem}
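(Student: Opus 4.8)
The plan is to reduce both implications to three results already in hand: the Bessel duality principle (Theorem~\ref{thm:Bessel-duality}), the Wexler-Raz relations (Theorem~\ref{th:wex-raz-nonsep}), and the abstract characterizations of frames and Riesz families (Theorems~\ref{thm:A} and~\ref{thm:B}). Co-compactness of $\Delta$ is the hinge: it is exactly what makes $\Delta^{\circ}$ discrete and countable, so that $\volto{\Delta}=\vol{\Delta}<\infty$ and $\Delta^{\circ}$ carries the counting measure underlying Theorem~\ref{thm:B}. I would fix at the outset that, for co-compact $\Delta$, Theorem~\ref{thm:Bessel-duality} reads in counting-measure form as $\int_{\Delta}\abs{\innerprod{f}{\pi(\nu)g}}^2\,d\nu\le B\norm{f}^2$ if and only if $\sum_{\mu\in\Delta^{\circ}}\abs{\innerprod{f}{\pi(\mu)g}}^2\le\vol{\Delta}B\norm{f}^2$, so that every transfer of a Bessel bound between $\Delta$ and $\Delta^{\circ}$ costs one factor of $\vol{\Delta}$.

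For (i)$\,\Rightarrow\,$(ii), assume $\gaborG{g}$ is a frame with bounds $A,B$. Theorem~\ref{th:frame-implies-cocompact} gives co-compactness, whence $\Delta^{\circ}$ is discrete and countable by Lemma~\ref{le:adjoint-lattice}. The canonical dual $\gaborG{S^{-1}g}$ is a dual frame with Bessel bound $A^{-1}$ (Remark~\ref{re:canonical-gabor-frames}), so Theorem~\ref{th:wex-raz-nonsep} yields $\innerprod{S^{-1}g}{\pi(\mu)g}=\vol{\Delta}\,\delta_{\mu,e}$ on $\Delta^{\circ}$. Rewriting this via the commutation relation~\eqref{eq:cr3} shows that $\setprop{\vol{\Delta}^{-1}\pi(\mu)S^{-1}g}{\mu\in\Delta^{\circ}}$ is biorthogonal to $\gaborG[\Delta^{\circ}]{g}$. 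Applying Theorem~\ref{thm:Bessel-duality} to $g$ and to $S^{-1}g$ converts the Bessel bounds $B$ and $A^{-1}$ over $\Delta$ into counting-measure Bessel bounds $\vol{\Delta}B$ and $(\vol{\Delta}A)^{-1}$ over $\Delta^{\circ}$, for $\gaborG[\Delta^{\circ}]{g}$ and for its biorthogonal system respectively. Theorem~\ref{thm:B} then delivers that $\gaborG[\Delta^{\circ}]{g}$ is a basic Riesz family with bounds $\volto{\Delta}A$ and $\volto{\Delta}B$.

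For (ii)$\,\Rightarrow\,$(i), note first that $\norm{\pi(\mu)g}=\norm{g}$ for all $\mu$, so the Riesz family is norm bounded and Proposition~\ref{th:rieszseq-implies-discrete} forces $\Delta^{\circ}$ to be discrete; with $\Delta^{\circ\circ}=\Delta$, Lemma~\ref{le:adjoint-lattice} then gives that $\Delta$ is co-compact. I would next pass to the canonical dual of $\gaborG[\Delta^{\circ}]{g}$ inside its closed span: since the frame operator $S_{\circ}$ of $\gaborG[\Delta^{\circ}]{g}$ commutes with every $\pi(\mu)$, $\mu\in\Delta^{\circ}$ (Lemma~\ref{thm:frame-op-commutes-time-freq}), this dual is again a Gabor system $\gaborG[\Delta^{\circ}]{h_0}$ with $h_0=S_{\circ}^{-1}g$, and its biorthogonality to $\gaborG[\Delta^{\circ}]{g}$ reads $\innerprod{h_0}{\pi(\mu)g}=\delta_{\mu,e}$. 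Put $h=\vol{\Delta}\,h_0$, so that $\innerprod{h}{\pi(\mu)g}=\vol{\Delta}\,\delta_{\mu,e}$ on $\Delta^{\circ}$. Running Theorem~\ref{thm:Bessel-duality} in the direction $\Delta^{\circ}\to\Delta^{\circ\circ}=\Delta$ shows that $\gaborG{g}$ is Bessel with bound $B$ and $\gaborG{h}$ is Bessel with bound $A^{-1}$; Theorem~\ref{th:wex-raz-nonsep} then certifies that $\gaborG{g}$ and $\gaborG{h}$ are dual frames, and Theorem~\ref{thm:A} pins the frame bounds to $A$ and $B$. In either case co-compactness of $\Delta$ has been established.

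The one genuinely delicate point is the normalization bookkeeping: three measures are in play --- the Haar measure on $\Delta$, the counting measure on $\Delta^{\circ}$ used in the Riesz statement and in Theorem~\ref{thm:B}, and the dual Haar measure $\vol{\Delta}^{-1}\mu_c$ on $\Delta^{\circ}$ built into Theorem~\ref{thm:Bessel-duality} --- and I must verify that each application of Bessel duality and of the Wexler-Raz relations contributes exactly the advertised power of $\vol{\Delta}$, so that the final bounds emerge as $\volto{\Delta}A$ and $\volto{\Delta}B$ rather than some other multiple. The secondary technical hurdle is ensuring that the biorthogonal (canonical dual) system of the possibly non-total Riesz family $\gaborG[\Delta^{\circ}]{g}$ is itself of Gabor form; this is precisely where the commutation of the frame operator with the time-frequency shifts (Lemma~\ref{thm:frame-op-commutes-time-freq}) is indispensable, since a priori the biorthogonal system need only be a family of the unstructured form $\setprop{h_{\mu}}{\mu\in\Delta^{\circ}}$.
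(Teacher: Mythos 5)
Your proposal is correct and follows essentially the same route as the paper's proof: co-compactness via Theorem~\ref{th:frame-implies-cocompact} and Proposition~\ref{th:rieszseq-implies-discrete}, then the canonical dual window, Bessel duality (Theorem~\ref{thm:Bessel-duality}), the Wexler--Raz relations (Theorem~\ref{th:wex-raz-nonsep}), and Theorems~\ref{thm:A}/\ref{thm:B}, with your normalization (scaling only the dual window by $\vol{\Delta}^{-1}$ rather than putting $\vol{\Delta}^{-1/2}$ on both windows, as the paper does) being an equivalent bookkeeping whose bounds check out. The only substantive addition is that you spell out the converse, which the paper dismisses as ``similar'', correctly noting via Lemma~\ref{thm:frame-op-commutes-time-freq} that the biorthogonal system of the basic Riesz family $\gaborG[\Delta^{\circ}]{g}$ is again of Gabor form.
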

\begin{proof} 
Suppose either (i) or (ii) holds. Then by Theorem~\ref{th:frame-implies-cocompact} and Proposition~\ref{th:rieszseq-implies-discrete} the group $(G\times\ghat)/\Delta$ is compact, and equivalently $\Delta^{\circ}$ is discrete.
Assume now that (i) holds. Then $\gabor{g}$ has the canonical dual frame $\gabor{S^{-1}g}$ with frame bounds $B^{-1}$ and $A^{-1}$. 
Therefore in particular $\gabor{g}$ and $\gabor{S^{-1}g}$ are Bessel systems with bounds $B$ and $A^{-1}$, respectively. By Theorem~\ref{thm:Bessel-duality} then also $\{\pi(\mu)(\vol{\Delta})^{-1/2} g\}_{\mu\in\Delta^{\circ}}$ and $\{\pi(\mu) (\vol{\Delta})^{-1/2} S^{-1} g\}_{\mu\in\Delta^{\circ}}$ are Bessel systems with respect to the counting measure on $\Delta^{\circ}$ with bound $B$ and $ A^{-1}$, respectively. By Theorem~\ref{th:wex-raz-nonsep} we also have that the duality of the frames $\gabor{g}$ and $\gabor{S^{-1}g}$ imply that  $\{\pi(\mu) (\vol{\Delta})^{-1/2} g\}_{\mu\in\Delta^{\circ}}$ and $\{\pi(\mu)(\vol{\Delta})^{-1/2} S^{-1}g\}_{\mu\in\Delta^{\circ}}$ are bi-orthogonal. By Theorem~\ref{thm:B} it now follows that
$\{\pi(\mu)(\vol{\Delta})^{-1/2} g\}_{\mu\in\Delta^{\circ}}$ is a basic Riesz family with bounds $A$
and $B$. The converse implication is similar where Theorem~\ref{thm:A} instead of
Theorem~\ref{thm:B} is used.
\end{proof}

Let us comment on a difference between the Bessel duality and the
duality principle. We have proven both results for any closed
subgroup $\Delta$ of $G \times \ghat$. However, for non-co-compact
subgroups, the duality principle
is vacuously true, in the sense that Theorem~\ref{th:frame-implies-cocompact} and Proposition~\ref{th:rieszseq-implies-discrete} imply that both statements in
Theorem~\ref{thm:duality-principle} are false. This is not so
for the Bessel duality. In fact,
Corollary~\ref{cor:Bessel-system-with-g-in-s0} shows that for any
closed subgroup $\Delta$ in $G \times \ghat$ \emph{any} function $g$ in
the Feichtinger algebra $S_0(G)$ will generate a Bessel system $\gaborG{g}$.
It is thus the additional lower frame inequality and lower Riesz family
condition that restrict the interesting (non-empty) statements of
Theorem~\ref{thm:duality-principle} to the
case where $(G\times\ghat)/\Delta$ is compact. It is, however,
remarkable that both of the assumptions limit the admissible subgroups
$\Delta$ to exactly those that have a compact quotient
$(G\times\ghat)/\Delta$. A similar comment holds for 
the Wexler-Raz biorthogonality relations in Theorem~\ref{th:wex-raz-nonsep}.


\begin{corollary}
\label{col:WR-and-duality-tight}
Let $\Delta$ be a closed subgroup
of $G\times\ghat$, and let $g\in L^2(G)$. Then $\gaborG{g}$ is a tight frame with bound $A$ if, and only if, $\gaborG[\Delta^{\circ}]{g}$ is an orthogonal system with $\norm{g}^2 = \vol{\Delta} A$.
\end{corollary}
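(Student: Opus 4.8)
The plan is to prove the two implications separately. The forward direction is immediate: if $\gaborG{g}$ is a tight frame with bound $A$, this is exactly the hypothesis of Corollary~\ref{col:WR-tight}, which delivers that $\gaborG[\Delta^{\circ}]{g}$ is an orthogonal system with $\norm{g}^2=\vol{\Delta}A$. So all the work lies in the converse, where my strategy is to exhibit $A^{-1}g$ as a dual window through the Wexler-Raz relations and then read off the frame operator.

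For the converse, I would start from the hypothesis that $\gaborG[\Delta^{\circ}]{g}$ is orthogonal with $\norm{g}^2=\vol{\Delta}A$; note $g\neq0$, since otherwise $\vol{\Delta}A=0$ would contradict $A>0$ and $\vol{\Delta}>0$. Because $g\in L^2(G)$, the identity $\vol{\Delta}=\norm{g}^2/A<\infty$ forces $\Delta$ to be co-compact by Lemma~\ref{le:adjoint-lattice}(i); hence $\Delta^{\circ}$ is discrete, $\volto{\Delta}=\vol{\Delta}$, and its Weil measure equals $\vol{\Delta}^{-1}\mu_c$ with $\mu_c$ the counting measure (Remark~\ref{rem:group-size}). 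Since all the vectors share the norm $\norm{\pi(\mu)g}=\norm{g}$, Bessel's inequality for the orthonormal system $\setprop{\norm{g}^{-1}\pi(\mu)g}{\mu\in\Delta^{\circ}}$ gives $\sum_{\mu\in\Delta^{\circ}}\abs{\innerprod{f}{\pi(\mu)g}}^2\le\vol{\Delta}A\,\norm{f}^2$; dividing by $\vol{\Delta}$ shows $\gaborG[\Delta^{\circ}]{g}$ is a Bessel system with bound $A$ in its Weil measure. By the Bessel duality principle (Theorem~\ref{thm:Bessel-duality}), $\gaborG{g}$ is then Bessel with bound $A$, and rescaling makes $\gaborG{A^{-1}g}$ Bessel with bound $A^{-1}$.

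The final step upgrades this to a tight frame. Testing the orthogonality relation against the term $\mu=e$ gives $\innerprod{\pi(\mu)g}{g}=0$ for $\mu\neq e$ and $\innerprod{g}{g}=\vol{\Delta}A$, so that $\innerprod{A^{-1}g}{\pi(\mu)g}=\vol{\Delta}\,\delta_{\mu,e}=\volto{\Delta}\,\delta_{\mu,e}$ for all $\mu\in\Delta^{\circ}$. With both $\gaborG{g}$ and $\gaborG{A^{-1}g}$ Bessel and this biorthogonality in hand, Theorem~\ref{th:wex-raz-nonsep} yields that they are dual frames, so that $S_{g,A^{-1}g}=\id[]$; since $S_{g,A^{-1}g}=A^{-1}S_{g,g}=A^{-1}S_g$, I conclude $S_g=A\,\id[]$, which is precisely the statement that $\gaborG{g}$ is a tight frame with bound $A$. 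The step requiring the most care is tracking the normalization $\vol{\Delta}^{-1}\mu_c$ on $\Delta^{\circ}$: it is this factor that makes the Bessel bound come out as $A$ across Theorem~\ref{thm:Bessel-duality} and that makes the biorthogonality constant equal $\volto{\Delta}$, exactly matching condition~(ii) of Theorem~\ref{th:wex-raz-nonsep}.
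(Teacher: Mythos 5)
Your proof is correct, but your converse takes a genuinely different route from the paper's. The paper disposes of the converse in one line: an orthogonal system $\gaborG[\Delta^{\circ}]{g}$ with $\norm{g}^2=\vol{\Delta}A$ is a basic Riesz family with both bounds equal to $\vol{\Delta}A$, and the duality principle (Theorem~\ref{thm:duality-principle}) immediately gives that $\gaborG{g}$ is a tight frame with bound $A$. You instead bypass Theorem~\ref{thm:duality-principle} and the Riesz-family formalism entirely: Bessel's inequality for the normalized orthogonal system, combined with the Weil measure $\vol{\Delta}^{-1}\mu_c$ on $\Delta^{\circ}$, gives the Bessel bound $A$; Theorem~\ref{thm:Bessel-duality} transfers this bound to $\Delta$; and the explicit candidate dual window $A^{-1}g$ satisfies the biorthogonality $\innerprod{A^{-1}g}{\pi(\mu)g}=\volto{\Delta}\,\delta_{\mu,e}$, so Theorem~\ref{th:wex-raz-nonsep} yields $S_{g,A^{-1}g}=\id[L^2(G)]$, hence $S_g=A\,\id[L^2(G)]$. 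In effect you have unfolded, for this special case, the proof of the duality principle itself (which runs through Theorems~\ref{thm:Bessel-duality}, \ref{th:wex-raz-nonsep} and~\ref{thm:A}), replacing the abstract frame-theoretic step (Theorem~\ref{thm:A}) by the concrete dual window $A^{-1}g$. The paper's route buys brevity, since all the machinery is packaged in Theorem~\ref{thm:duality-principle}; your route buys self-containedness and transparency: tightness appears directly as $S_g=A\,\id[L^2(G)]$, and you extract co-compactness of $\Delta$ at the outset from the finiteness of $\norm{g}^2=\vol{\Delta}A$ for $g\in L^2(G)$ --- a point the paper's one-liner leaves implicit in the hypotheses feeding into Theorem~\ref{thm:duality-principle}. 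Your bookkeeping of the measure on $\Delta^{\circ}$ (the factor $\vol{\Delta}^{-1}$ relative to the counting measure, which makes the Bessel bound come out as $A$ and the biorthogonality constant as $\volto{\Delta}$) is exactly right, and is indeed the step where such an argument most easily goes wrong.
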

\begin{proof} 
  One implication follows by Corollary~\ref{col:WR-tight}. For the other note that an orthogonal
  system $\{\pi(\mu)g\}_{\mu\in\Delta^{\circ}}$ with $\norm{g}^2 = \vol{\Delta} A$ is a basic Riesz
  family, where both bounds are $\vol{\Delta} A$. By Theorem~\ref{thm:duality-principle} the Gabor system
  $\gabor{g}$ is a tight frame with bound $A$.
\end{proof}

Now, we are ready to show existence of tight Gabor frames in
$L^2(\R^n)$ for very ``sparse'', but non-discrete,
subgroups $\Delta$; here we mean sparse (or thin) in the sense that
$\vol{\Delta}$ can be arbitrarily large. On the other hand, if
$\Delta$ is discrete, we saw in
Theorem~\ref{thm:density-frame-uniform} that $\vol{\Delta} \le 1$ is
necessary for the existence of Gabor frames. 

\begin{example}
\label{exa:construction-of-frames-in-L2Rn}
Let $G=\R^n$ and let $\Delta=\LL \times \LG$, where $\LL$ and $\LG$
are closed, co-compact subgroups of $\R^n$. Then $\LG = P(\Z^r \times
\R^{n-r})$ and $\LL = Q(\Z^s \times \R^{n-s})$ for some $P,Q \in
\mathrm{GL}_{\R}(n)$ and $0 \le r,s \le n$. If we consider $P$ and $Q$
as $n \times n$ matrices and the columns
of $P$ and $Q$ as vectors in $\R^n$, we can take the last $n-r$ and
the last $n-s$ columns of $P$ and $Q$, respectively, to be orthonormal
vectors. We then equip $\LG^\perp = (P^T)^{-1}(\Z^r \times
\{0\}^{n-r})$ and $\LL^\perp = (Q^T)^{-1}(\Z^s \times \{0\}^{n-s})$
with the counting measure times $\abs{\det{P}}^{-1}$ and
$\abs{\det{Q}}^{-1}$, respectively. It follows that $\vol{\Delta}=\abs{\det{(PQ)}}$. We split the construction of
tight Gabor frames $\sepgabor{\LL}{\LG}$ in three cases:
\begin{enumerate}[(a)]
\item $r<n$, any $P,Q \in \mathrm{GL}_{\R}(n)$,
\item $s<n$, any $P,Q \in \mathrm{GL}_{\R}(n)$,
\item $r=n$ and $s=n$, and $\vol{\Delta}=\abs{\det{(PQ)}}\le 1$ for $P,Q \in \mathrm{GL}_{\R}(n)$.
\end{enumerate}
Case (a) and (b) correspond to non-discrete subgroups, while case (c)
is the well-known setup of discrete Gabor systems.
\begin{enumerate}[(a)]
\item 
 Applying the dilation operator $D_{Q^{-1}}$, defined on $L^2(\R^n)$ by
$D_Af(x)=\det{A}^{1/2}f(Ax)$, to the functions in $\sepgabor{Q(\Z^s \times \R^{n-s})}{P(\Z^r \times \R^{n-r})}$ we obtain: 
\[ 
\sepgabor[\tilde{g}]{\Z^s \times \R^{n-s}}{QP(\Z^r \times \R^{n-r})}, \quad \text{with } \tilde{g}:=D_{Q^{-1}}g .
\]
The adjoint Gabor system is 
\begin{equation}
\set{E_\beta T_\alpha \tilde{g}}_{\alpha \in A(\Z^r \times
  \{0\}^{n-r}), \beta \in \Z^s \times \{0\}^{n-s}}, \label{eq:Rn-ex-adjoint}
\end{equation}
where $A=((QP)^{T})^{-1}$. We will choose $\tilde{g}$ so that this adjoint system is an orthonormal
system. By Corollary~\ref{col:WR-and-duality-tight}, the Gabor system $\sepgabor{\LL}{\LG}$ generated
by $g=D_Q\tilde{g}$ will then be a Parseval frame for $L^2(\R^n)$.

Obviously, the system $\set{E_\beta \mathds{1}_{\itvcc{0}{1}^n}}_{\beta \in \Z^s \times \{0\}^{n-s}}$ is
orthonormal. We consider the columns of $A$ as vectors in $\R^n$ and redefine the last $n-r$ columns of $A$ to be an
orthonormal basis of the orthogonal complement of the first $r$ column
vectors of $A$. For each $z\in \Z^r$ define
\[ 
K_z = \itvcc{0}{1}^n \cap A(\itvcc{z}{z+1}^r \times \R^{n-r}).
\]
Let $Z=\setprop{z \in \Z^r}{K_z\neq \emptyset}$; as usual, our set
relations should be understood only up to sets of measure zero. Since $Z$ is
finite and the subspace $A(\R^r \times \{0\}^{n-r})$ of co-dimension $n-r>0$,
we can find points $\setprop{y_z\in \Z^n}{z \in Z}$ that satisfy
\begin{equation}
(K_z+y_z)\cap (K_{z'}+y_{z'}+\alpha) = \emptyset \quad  \forall \alpha
\in A(\Z^r \times \{0\}^{n-r})\setminus \{0\}^n \label{eq:Kz-yz-def}
\end{equation}
for all $z,z' \in Z$. The choice of $y_z$ is
illustrated in Figure~\ref{fig:Kz}.
\begin{figure}[ht]
  \centering
    \begin{tikzpicture}[scale=3.2]
    \coordinate (Origin)   at (0,0);
    \coordinate (A)   at (0,0);
    \coordinate (B)   at (.5,0);
    \coordinate (C)   at (1,0);
    \coordinate (D)   at (0,1);
    \coordinate (E)   at (0.5,1);
    \coordinate (F)   at (1,1);
\draw[->, thick] (-1.1,0) -- (2.1,0) node[right] {$x_1$};
\draw[->, thick] (0,-1.1) -- (0,2.1) node[above] {$x_2$};
    \pgftransformcm{1}{0}{0}{1}{\pgfpoint{0cm}{0cm}}
    \coordinate (Aone) at (0.4,0.2);
    \draw[style=help lines,dashed] (-1,-1) grid[step=1cm] (2,2);
    \foreach \x in {-2,-1,...,5}{
        \node[draw,circle,inner sep=1pt,fill] at (0.4 *\x,0.2*\x) {};
    }
    \draw [thick,-latex] (Origin)
        -> (Aone); 
    \filldraw[fill=gray, fill opacity=0.1, draw=black] (Origin) -- (B)
    -- (D) -- cycle;
  \filldraw[fill=yellow, fill opacity=0.1, draw=black] (-1,1) -- ($(B)+(-1,1)$)
    -- ($(D)+(-1,1)$) -- cycle;
    \filldraw[fill=gray, fill opacity=0.3, draw=black] (B) -- (C)
    -- (E) -- (D) -- cycle; 
   \filldraw[fill=gray, fill opacity=0.5, draw=black] (C) -- (E)
    -- (F) -- cycle;
  \filldraw[fill=blue, fill opacity=0.5, draw=black] ($(C)+(1,-1)$) -- ($(E)+(1,-1)$)
    -- ($(F)+(1,-1)$) -- cycle;
\foreach \x/\xtext in {-1/-1, 1/1}
    \draw[shift={(\x,0)}] (0pt,1pt) -- (0pt,-1pt) node[below left]
    {$\xtext$};
\foreach \y/\ytext in {-1/-1,1/1, 2/2}
    \draw[shift={(0,\y)}] (1pt,0pt) -- (-1pt,0pt) node[above left] {$\ytext$};
\node at (-.77,1.08) {$K_0+y_0$};
\node at (0.45,0.6) {$K_1=$};
\node at (0.15,0.30) {$K_0$};
\node at (0.37,0.08) {$a_1$};
\node at (0.53,0.45) {$K_1+y_1$};
\node at (0.85,0.85) {$K_2$};
\node at (1.78,-.1) {$K_2+y_2$};
  \end{tikzpicture}
  \caption{An example for $n=2, r=1$ showing the choice of the integer vectors
    $y_z$ in \eqref{eq:Kz-yz-def}. The dots show $A(\Z \times \{0\})$, where
    $A=\left[a_1 \; a_2\right]$ is a $2\times 2$ matrix, and the column vector $a_1$ 
    is illustrated as a geometric vector on the plot, and $a_2$ is
    orthogonal to $a_1$. Then $Z=\{0,1,2\}$, and we can take $y_0=(-1,1),
    y_1=(0,0), y_2=(1,-1)$. With this choice the set $\cup_{z \in Z} (K_z+y_z)$,
    and its translates along the ``dots'' $A(\Z \times
    \{0\}) \setminus \{(0,0)\}$ are disjoint.}
  \label{fig:Kz}
\end{figure}
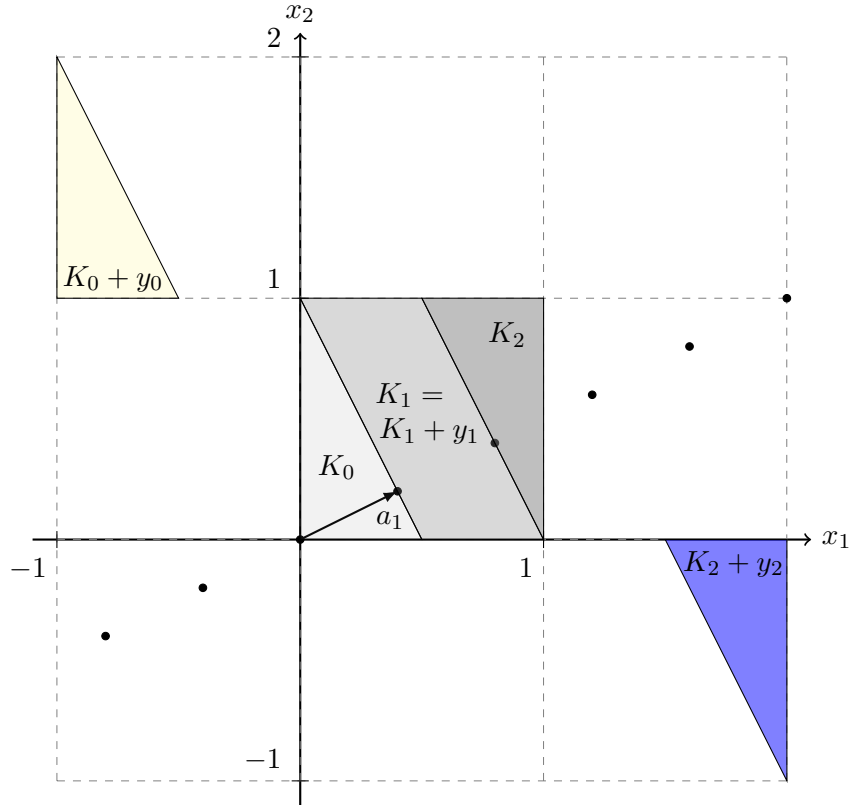
Define $\tilde{g} \in L^2(\R)$ by 
\[
\tilde{g}=\sum_{z \in Z}\mathds{1}_{K_z+y_z}.
\]
By $\Z^n$-periodicity of $\myexp{2\pi i \beta x}$ for $\beta \in \Z^n$ 
and the fact that $y_z\in \Z^n$, we see that $\set{E_\beta
  \tilde{g}}_{\beta \in \Z^s \times \{0\}^{n-s}}$ is an orthonormal
set. By \eqref{eq:Kz-yz-def}, the translates $T_\alpha\tilde{g}$ and
$T_{\alpha'}\tilde{g}$ have disjoint support for $\alpha,\alpha' \in
A(\Z^r \times \{0\}^{n-r})$ whenever $\alpha \neq \alpha'$. Combining
these two facts, we see that the adjoint Gabor
system~\eqref{eq:Rn-ex-adjoint} is orthonormal.  As a conclusion we
have that tight Gabor frames $\sepgabor{\LL}{\LG}$ exist for any value
of $\vol{\Delta}=\abs{\det{(PQ)}}>0$ with generators $g$ having
compact support.  
\item 
An application of the Fourier transform switches the
role of $\LL$ and $\LG$. Hence, we can construct a tight Gabor frame generator $g$
in the frequency domain by directly referring to case (a). This approach, however, leads to
bandlimited generators. If compactly supported generators are desired,
slight modifications of the procedure in (a) are necessary.  
\item 
When $r=n$ and $s=n$, both $\LL=P\Z^n$ and $\LG=Q\Z^n$ are full-rank lattice in
$\R^n$, and we equip these discrete subgroups with the counting measure. Under this setup, Han and
Wang~\cite{MR1866800} prove that $\vol{\Delta}=\abs{\det{(PQ)}}\le 1$ is equivalent with the existence of a tight Gabor
frames $\sepgabor{\LL}{\LG}$. Their proof is constructive and the constructed Gabor windows
$g$ is, as above, a characteristic function of a given set. However, 
in this case the set might be unbounded, in which case the generator
will not have compact support.
\end{enumerate}
\end{example}

For uniform lattices $\Delta$ in $G \times \ghat$ it follows from the duality principle in
Theorem~\ref{thm:duality-principle} applied to the density result in
Theorem~\ref{thm:density-frame-uniform} that $\gaborG{g}$ being a (basic) Riesz family implies that
$\vol{\Delta}\ge 1$. For non-discrete Gabor systems this conclusion is false, in fact, by the
duality principle, Example~\ref{exa:construction-of-frames-in-L2Rn} shows that $\vol{\Delta}$ can
take any value in $\R_+$.



\appendix

\section{The Feichtinger algebra $S_{0}$}
\label{sec:proof-lemma-xx}

For functions $f,g\in L^1(G)$ involution and convolution are defined
by 
\[ f^{\dagger}(x) = \overline{f(x^{-1})} \quad \text{and} \quad (f * g)(x) = \int_{G}
f(s) g(xs^{-1})\,ds,  
\]
respectively. The function space $S_0$ is a Fourier-invariant Banach space that is dense in $L^2$ and whose
members are continuous and integrable functions. 
It is can be defined as follows.
\begin{definition} \label{def:s0-by-modulation-and-convolution} For an LCA group $G$ we define
\[ S_{0}(G) = \Big\lbrace  f\in L^{1}(G) \, : \, \int_{\ghat} \Vert E_{\omega} f * f^{\dagger} \Vert_{L^{1}} \, d\omega < \infty \Big\rbrace .\]
We endow $S_{0}(G)$ with the norm $\Vert f \Vert_{S_{0},g}:= \Vert \mathcal{V}_{g}f\Vert_{L^{1}(G\times\ghat)}$ for a fixed $g\in S_{0}(G)$.
\end{definition}

The space $S_{0}(G)$ is a Banach algebra under convolution and
pointwise multiplication, also known as the Feichtinger algebra \cite{MR643206}. It is
a special instance of both a modulation space and a Wiener
amalgam space, namely, $M^1$ and $W(\mathcal{F}L^{1},L^{1})$. 
The following relations explain the definition
of $\Vert f \Vert_{S_{0},g}$: 
\begin{equation}
   \int_{\ghat} \Vert E_{\omega}f*g^{\dagger} \Vert_{L^{1}(G)} \, d\omega = \int_{G} \Vert \mathcal F(f\cdot \overline{T_{x}g}) \Vert_{L^{1}(\ghat)} \, dx = \Vert \mathcal{V}_{g}f
  \Vert_{L^{1}(G\times\ghat)}  \label{def:s0-by-stft-modulation-relation-eq}
\end{equation}
for $f,g \in S_0(G)$, where we have used that
\begin{equation} \label{eq:stft-rewrite-to-wfl1l1} 
\mathcal{V}_{g}f(x,\omega) = \mathcal{F}\big(f\cdot \overline{T_{x}g}\big)(\omega), \ \
(x,\omega)\in G\times\ghat, \quad \text{for all $f,g\in L^{2}(G)$.} 
\end{equation}


In the proof of Theorem~\ref{thm:Bessel-duality} we need the following two properties of $S_{0}$:
\begin{itemize}
\item The product of two short-time Fourier transforms of $L^2$-functions with windows in
  $S_0(G)$ is a function in $S_0(G \times \ghat)$ (Theorem~\ref{thm:frame-op-is-in-s0-for-g-in-s0}).
\item For any windows $g \in L^2(G)$ and any closed subgroup $\Delta$
  of $G \times \ghat$, the frame operator 
\[ 
S_{g,\Delta} : L^2(G) \to L^2(G), \quad S_{g,\Delta}f=\int_{\Delta} \vert \langle f,\pi(\nu)g\rangle \vert^2 \, d\nu
\]
with domain $D(S_{g,\Delta})=S_0(G) \subset L^2(G)$ is well-defined
(Corollary~\ref{cor:Bessel-system-with-g-in-s0}).
\end{itemize}

The aim of this appendix is to give a proof of these statements.
The material presented here is known in
the lattice case in $L^2(\R^n)$ \cite{MR2020210,MR2264211,MR1601107,MR1843717}, and the generalization to $L^2(G)$ is
routine using standard harmonic analysis. We have included the proofs for completeness. Along the
way, we obtain a direct proof of the H\"older inequalities for certain Wiener amalgam
spaces. 



We need some further properties of short-time Fourier transform.
\begin{lemma} \label{le:STFT-properties-lemma} 
Let $g,g_{i},f,f_{i}\in L^2(G)$, $i=1,2$ and $x,\alpha\in G$ and $\omega,\beta\in\ghat$. 
Then the short-time Fourier transform
\[ \mathcal{V}_{g}: L^2(G)\to L^2(G\times\ghat), \quad \mathcal{V}_{g}f(x,\omega) = \langle f, E_{\omega}T_{x}g\rangle\] 
satisfies the following relations:
\begin{enumerate}[(a)]
\item $ \mathcal{V}_{g}E_{\beta}T_{\alpha}f = \beta(\alpha) \,
  E_{(e_{\ghat},\alpha^{-1})}T_{(\alpha,\beta)} \mathcal{V}_{g}f$, where $e_{\ghat}$ denotes the
  identity element in $\ghat$, \label{item:2}
\item $\mathcal{V}_{E_{\beta}T_{\alpha}g}E_{\beta}T_{\alpha}f = \beta(x) \overline{\omega(\alpha)}
  \, \mathcal{V}_{g}f$, \label{item:3}
\item $\mathcal F ( \mathcal{V}_{g_1}f_{1} \cdot \overline{\mathcal{V}_{g_2}f_2})(\beta,\alpha) =
  \langle f_1,E_{\beta}T_{\alpha^{-1}}f_2\rangle\langle E_{\beta}T_{\alpha^{-1}}g_2,g_1\rangle$,
  where $\mathcal F$ is the Fourier transform on $G\times\ghat$. \label{item:4}
\end{enumerate}
\end{lemma}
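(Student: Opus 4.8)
The plan is to verify the three identities (a), (b), (c) by direct computation at the operator level, using only the commutator relations \eqref{eq:cr2}--\eqref{eq:cr4} together with the orthogonality relation in Lemma~\ref{le:stft-norm-preserving}(iii); no further machinery should be required. Throughout I would fix the convention that the character on $G\times\ghat$ indexed by $(\beta,\alpha)\in\ghat\times G$ evaluates to $\omega(\alpha)\beta(x)$ at $(x,\omega)$, consistent with the annihilator pairing used earlier, and use it uniformly.

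For (a), I would start from $\mathcal{V}_{g}(E_{\beta}T_{\alpha}f)(x,\omega)=\langle \pi(\alpha,\beta)f,\pi(x,\omega)g\rangle$ and move $\pi(\alpha,\beta)$ into the second slot as $\pi(\alpha,\beta)^{*}$. Applying \eqref{eq:cr2} and then \eqref{eq:cr3} reduces $\pi(\alpha,\beta)^{*}\pi(x,\omega)$ to a scalar multiple of $\pi(\alpha^{-1}x,\beta^{-1}\omega)$; since this scalar sits in the conjugate-linear argument it emerges as $\beta(\alpha)\overline{\omega(\alpha)}$, and one recognizes $\mathcal{V}_{g}f(\alpha^{-1}x,\beta^{-1}\omega)$. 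It then remains to match this with the right-hand side by observing that $T_{(\alpha,\beta)}$ produces precisely the argument shift $(\alpha^{-1}x,\beta^{-1}\omega)$ and that the modulation character indexed by $(e_{\ghat},\alpha^{-1})$ evaluates to $\omega(\alpha^{-1})=\overline{\omega(\alpha)}$, so that $\beta(\alpha)E_{(e_{\ghat},\alpha^{-1})}T_{(\alpha,\beta)}\mathcal{V}_{g}f$ reproduces exactly the factor $\beta(\alpha)\overline{\omega(\alpha)}$.

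For (b) the key simplification is a conjugation identity: with $\mu=(\alpha,\beta)$ and $\nu=(x,\omega)$, relation \eqref{eq:cr4} gives $\pi(\nu)\pi(\mu)=\omega(\alpha)\overline{\beta(x)}\,\pi(\mu)\pi(\nu)$, whence $\pi(\mu)^{*}\pi(\nu)\pi(\mu)=\omega(\alpha)\overline{\beta(x)}\,\pi(\nu)$ because $\pi(\mu)$ is unitary. Substituting into $\mathcal{V}_{E_{\beta}T_{\alpha}g}(E_{\beta}T_{\alpha}f)(x,\omega)=\langle f,\pi(\mu)^{*}\pi(\nu)\pi(\mu)g\rangle$ and pulling the conjugate of the scalar out of the second slot yields the stated factor $\beta(x)\overline{\omega(\alpha)}$. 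For (c) I would reduce to Lemma~\ref{le:stft-norm-preserving}(iii): first rewrite the product pointwise as $\langle f_{1},\pi(x,\omega)g_{1}\rangle\langle\pi(x,\omega)g_{2},f_{2}\rangle$, so that $\mathcal F(\mathcal{V}_{g_1}f_{1}\cdot\overline{\mathcal{V}_{g_2}f_2})(\beta,\alpha)$ is the integral of this product against $\overline{\omega(\alpha)\beta(x)}$ over $G\times\ghat$. Taking $\mu=(\alpha^{-1},\beta)$, the same conjugation computation gives $\pi(\mu)^{*}\pi(x,\omega)\pi(\mu)=\overline{\omega(\alpha)\beta(x)}\,\pi(x,\omega)$, which lets me replace $\overline{\omega(\alpha)\beta(x)}\langle\pi(x,\omega)g_{2},f_{2}\rangle$ by $\langle\pi(x,\omega)\pi(\mu)g_{2},\pi(\mu)f_{2}\rangle$. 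The integral is then exactly of the form handled by Lemma~\ref{le:stft-norm-preserving}(iii) with windows $g_{1},\pi(\mu)g_{2}$ and vectors $f_{1},\pi(\mu)f_{2}$, evaluating to $\langle f_{1},\pi(\mu)f_{2}\rangle\langle\pi(\mu)g_{2},g_{1}\rangle=\langle f_1,E_{\beta}T_{\alpha^{-1}}f_2\rangle\langle E_{\beta}T_{\alpha^{-1}}g_2,g_1\rangle$.

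I expect the main obstacle to be bookkeeping rather than anything conceptual: correctly tracking the phase factors produced by \eqref{eq:cr2}--\eqref{eq:cr4}, the inversions $\omega(\alpha^{-1})=\overline{\omega(\alpha)}$, and above all the extra conjugation coming from conjugate-linearity of the inner product in its second argument. The single place where a sign or conjugate slip would propagate is the identification of the Fourier-transform/modulation character on $G\times\ghat$ with the pairing $(\beta,\alpha)\mapsto\omega(\alpha)\beta(x)$, so I would pin that convention down first and verify at least one of (a) or (c) by an independent direct integral computation as a consistency check.
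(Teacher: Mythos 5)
Your proposal is correct and follows essentially the same route as the paper's proof: (a) by moving $E_{\beta}T_{\alpha}$ into the conjugate-linear slot and commuting via \eqref{eq:cr2}--\eqref{eq:cr3}, (b) via the conjugation identity $\pi(\mu)^{*}\pi(\nu)\pi(\mu)=\omega(\alpha)\overline{\beta(x)}\,\pi(\nu)$ from \eqref{eq:cr4} (the paper phrases this as commuting $E_{\omega}T_{x}$ past $E_{\beta}T_{\alpha}$ and cancelling by unitarity), and (c) by absorbing the character $\overline{\beta(x)\omega(\alpha)}$ into time-frequency shifts $E_{\beta}T_{\alpha^{-1}}$ of $g_{2},f_{2}$ --- which is exactly the paper's invocation of (b) --- followed by the orthogonality relation \eqref{eq:STFT-dual}. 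All phase factors and conjugations check out, including the identification of $E_{(e_{\ghat},\alpha^{-1})}$ with pointwise multiplication by $\overline{\omega(\alpha)}$.
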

\begin{proof} 
  Assertion~\eqref{item:2} follows from: 
\begin{multline*}
 (\mathcal{V}_{g}E_{\beta}T_{\alpha}f)(x,\omega)  = \langle E_{\beta}T_{\alpha}f, E_{\omega}T_{x}g\rangle = 
 \langle f, T_{\alpha^{-1}}E_{\omega\beta^{-1}}T_{x}g\rangle
= \overline{\omega(\alpha)} \beta(\alpha) \langle f, E_{\omega\beta^{-1}}T_{x\alpha^{-1}}g\rangle \\  = \overline{\omega(\alpha)} \beta(\alpha)  \mathcal{V}_{g}f(x\alpha^{-1},\omega\beta^{-1}) = \beta(\alpha) \, \big(E_{(e_{\ghat},\alpha^{-1})}T_{(\alpha,\beta)} \mathcal{V}_{g}f\big)(x,\omega). 
\end{multline*}
Assertion~\eqref{item:3} follows by similar manipulations, using the unitarity of $E_{\beta}T_{\alpha}$:
\begin{multline*}
 \mathcal{V}_{E_{\beta}T_{\alpha}g}E_{\beta}T_{\alpha}f(x,\omega) = \langle E_{\beta}T_{\alpha}f,E_{\omega}T_{x}E_{\beta}T_{\alpha}g\rangle = \beta(x) \overline{\omega(\alpha)} \langle E_{\beta}T_{\alpha}f,E_{\beta}T_{\alpha}E_{\omega}T_{x}g\rangle \\
 = \beta(x) \overline{\omega(\alpha)} \langle f,E_{\omega}T_{x}g\rangle = \beta(x) \overline{\omega(\alpha)} \mathcal{V}_{g}f(x,\omega).
\end{multline*}
For~\eqref{item:4} we do the following:
\begin{align*}
& \mathcal F ( \mathcal{V}_{g_1}f_{1} \cdot \overline{\mathcal{V}_{g_2}f_2} )(\beta,\alpha) = \int_{G\times\ghat} \mathcal{V}_{g_1}f_{1}(x,\omega) \cdot \overline{\mathcal{V}_{g_2}f_2(x,\omega) \beta(x) \omega(\alpha)} \, d(x,\omega) \\
& \overset{\mathclap{\eqref{item:3}}}=\int_{G\times\ghat} \mathcal{V}_{g_1}f_{1}(x,\omega) \cdot \overline{\mathcal{V}_{E_{\beta}T_{\alpha^{-1}}g_2}E_{\beta}T_{\alpha^{-1}}f_2(x,\omega)} \, d(x,\omega)
= \langle \mathcal{V}_{g_1}f_{1}, \mathcal{V}_{E_{\beta}T_{\alpha^{-1}}g_2}E_{\beta}T_{\alpha^{-1}}f_2\rangle \\
& \overset{\mathclap{\eqref{eq:STFT-dual}}}= \langle f_1,E_{\beta}T_{\alpha^{-1}}f_2\rangle\langle E_{\beta}T_{\alpha^{-1}}g_{2},g_{1}\rangle.
\end{align*}
\end{proof}

The norm on $S_0(G)$ depends on a fixed
function $g\in S_0(G)$. However, any function $g$ in $S_0$ induces an equivalent norm. Indeed, for
$f,g_1,g_2\in S_0(G)$ one can show that
\[ 
\Vert g_1 \Vert_{L^2}^2 \, \Vert g_2 \Vert_{S_0,g_1}^{-1} \, \Vert f
\Vert_{S_0,g_2} \le \Vert f \Vert_{S_0,g_1} \le \Vert g_2
\Vert_{L^2}^{-2} \, \Vert g_2 \Vert_{S_0,g_1} \, \Vert f
\Vert_{S_0,g_2} .
\]
As a consequence, a function $f\in L^{1}(G)$ belongs to $S_0(G)$
if, and only if, $\mathcal{V}_{g}f\in L^1(G\times\ghat)$ for any and thus all $g\in S_0(G)$. 

\begin{proposition} \label{prop:Vfg-in-s0}
If $f,g \in S_0(G)$, then $\mathcal{V}_{g}f\in
S_{0}(G\times\ghat)$ and 
\[ 
\Vert \mathcal{V}_{g}f \Vert_{S_0,\mathcal{V}_{g}f} = \Vert f
\Vert_{S_{0},f} \, \Vert g \Vert_{S_{0},g}.  
\]
\end{proposition}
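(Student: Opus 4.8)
The plan is to reduce the whole statement to a single computation of a ``short-time Fourier transform of a short-time Fourier transform'' and then to recognize the result as a product of two STFTs on $G$. First I would record the self-window characterization of the Feichtinger algebra: combining Definition~\ref{def:s0-by-modulation-and-convolution} with the identity \eqref{def:s0-by-stft-modulation-relation-eq} (taken with $g=f$) shows that a function $F\in L^1(G\times\ghat)$ lies in $S_0(G\times\ghat)$ precisely when $\|\mathcal{V}_F F\|_{L^1}=\|F\|_{S_0,F}<\infty$. Since $f,g\in S_0(G)$, we already know $\mathcal{V}_g f\in L^2(G\times\ghat)$ by Lemma~\ref{le:stft-norm-preserving}(i) and $\mathcal{V}_g f\in L^1(G\times\ghat)$ because $\|\mathcal{V}_g f\|_{L^1}=\|f\|_{S_0,g}<\infty$. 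Hence both the membership $\mathcal{V}_g f\in S_0(G\times\ghat)$ and the claimed norm identity follow once I show
\[
\bigl\|\mathcal{V}_{\mathcal{V}_g f}(\mathcal{V}_g f)\bigr\|_{L^1((G\times\ghat)\times(\ghat\times G))}=\|f\|_{S_0,f}\,\|g\|_{S_0,g}.
\]
It is cleaner to treat the general object $\mathcal{V}_{\mathcal{V}_{g_2}f_2}(\mathcal{V}_{g_1}f_1)$ and to specialize $f_1=f_2=f$, $g_1=g_2=g$ only at the end.

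The heart of the argument is a pointwise factorization. Writing $z=(x_0,\omega_0)\in G\times\ghat$ and $\zeta=(\beta,\alpha)\in\ghat\times G=\widehat{G\times\ghat}$, I would start from the rewrite formula \eqref{eq:stft-rewrite-to-wfl1l1} applied on the group $G\times\ghat$, namely $\mathcal{V}_W V(z,\zeta)=\mathcal{F}_{G\times\ghat}(V\cdot\overline{T_zW})(\zeta)$ with $V=\mathcal{V}_{g_1}f_1$ and $W=\mathcal{V}_{g_2}f_2$. The translate $T_zW=T_{(x_0,\omega_0)}\mathcal{V}_{g_2}f_2$ is rewritten, via the covariance relation in part (a) of Lemma~\ref{le:STFT-properties-lemma}, as the STFT of a time-frequency shifted window, up to a unimodular phase and an additional modulation $E_{(e_{\ghat},x_0)}$; this extra modulation is absorbed into a shift of the $\zeta$-variable of the Fourier transform. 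Applying then part (c) of Lemma~\ref{le:STFT-properties-lemma} (the Fourier transform of a product of two STFTs) together with the commutator relations \eqref{eq:cr2}--\eqref{eq:cr4}, I expect the identity
\[
\mathcal{V}_{\mathcal{V}_{g_2}f_2}(\mathcal{V}_{g_1}f_1)\bigl((x_0,\omega_0),(\beta,\alpha)\bigr)
=\omega_0(\alpha^{-1})\,\mathcal{V}_{f_2}f_1(\alpha^{-1},\beta\omega_0)\,\overline{\mathcal{V}_{g_2}g_1\bigl((\alpha x_0)^{-1},\beta\bigr)},
\]
in which the entire dependence on $z$ and $\zeta$ has split between a value of $\mathcal{V}_{f_2}f_1$ and a value of $\mathcal{V}_{g_2}g_1$.

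Finally I would take absolute values---so that the unimodular phase $\omega_0(\alpha^{-1})$ drops out---and integrate over $(x_0,\omega_0,\beta,\alpha)$. The substitution
\[
(x_0,\omega_0,\beta,\alpha)\longmapsto(a,b,c,d)=\bigl(\alpha^{-1},\,\beta\omega_0,\,(\alpha x_0)^{-1},\,\beta\bigr)
\]
is a bijection of $G\times\ghat\times\ghat\times G$ onto $(G\times\ghat)\times(G\times\ghat)$ built only from group inversions and translations, hence it preserves the product Haar measure; it decouples the integrand into $|\mathcal{V}_{f_2}f_1(a,b)|\,|\mathcal{V}_{g_2}g_1(c,d)|$, so Tonelli's theorem yields $\|\mathcal{V}_{\mathcal{V}_{g_2}f_2}(\mathcal{V}_{g_1}f_1)\|_{L^1}=\|\mathcal{V}_{f_2}f_1\|_{L^1}\,\|\mathcal{V}_{g_2}g_1\|_{L^1}$. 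Specializing to $f_1=f_2=f$ and $g_1=g_2=g$ and using $\|\mathcal{V}_f f\|_{L^1}=\|f\|_{S_0,f}$ then gives the proposition.

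The main obstacle is the bookkeeping in the second step: correctly tracking the unimodular phases, converting the leftover modulation $E_{(e_{\ghat},x_0)}$ into the right shift of the Fourier variable, and invoking parts (a) and (c) of Lemma~\ref{le:STFT-properties-lemma} with the commutator relations so that the arguments of the two resulting STFTs come out as stated. A secondary, but necessary, point is to verify that the change of variables above is genuinely a measure-preserving bijection that separates the two factors; this is routine once the factorization is in hand.
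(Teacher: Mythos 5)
Your proposal is correct and takes essentially the same route as the paper's proof: both reduce the claim to evaluating $\Vert \mathcal{V}_{\mathcal{V}_{g}f}(\mathcal{V}_{g}f)\Vert_{L^{1}}$ via the self-window characterization of $S_0$, then combine Lemma~\ref{le:STFT-properties-lemma}\eqref{item:2} and \eqref{item:4} with the rewrite \eqref{eq:stft-rewrite-to-wfl1l1} to factor the integrand into $\vert\mathcal{V}_{f}f\vert\cdot\vert\mathcal{V}_{g}g\vert$ at shifted arguments, and finish with a measure-preserving change of variables and Tonelli. Your explicit pointwise factorization, including the phase $\omega_0(\alpha^{-1})$, is exactly what the paper's chain of equalities computes under the absolute value, so the two arguments coincide in substance.
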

\begin{proof} Let $f,g\in S_{0}(G)$. By the argument preceding the proposition, we have that $\mathcal{V}_{g}f\in L^{1}(G\times\ghat)$.
Using Lemma~\ref{le:STFT-properties-lemma} we find: 
\begin{align*}
& \int_{G\times\ghat}\int_{\ghat\times G} \vert \langle \mathcal{V}_{g}f, E_{(\beta,\alpha)}T_{(x,\omega)} \mathcal{V}_{g}f\rangle \vert \, d(\beta,\alpha) \, d(x,\omega) \\
& \overset{\mathclap{\eqref{item:2}}} = \int_{G\times\ghat}\int_{\ghat\times G} \vert \langle \mathcal{V}_{g}f, E_{(\beta,\alpha x)} \mathcal{V}_{g}E_{\omega}T_{x}f\rangle \vert \, d(\beta,\alpha) \, d(x,\omega)\\
& \overset{\mathclap{\strut\text{(\ref{eq:stft-rewrite-to-wfl1l1})}}} = \int_{G\times\ghat}\int_{\ghat\times G} \big\vert \mathcal{F}\big( \mathcal{V}_{g}f \cdot \overline{\mathcal{V}_{g}E_{\omega}T_{x}f} \big)(\beta,\alpha x) \big\vert \, d(\beta,\alpha) \, d(x,\omega) \\
& \overset{\mathclap{\eqref{item:4}}} = \int_{G\times\ghat}\int_{\ghat\times G} \big\vert \langle f,E_{\beta\omega}T_{\alpha^{-1}}f\rangle\langle E_{\beta}T_{x^{-1}\alpha^{-1}}g,g\rangle  \big\vert \, d(\beta,\alpha) \, d(x,\omega) \\
 \big(\alpha\mapsto\alpha^{-1},\beta\mapsto\beta\omega^{-1} \big)&  = \int_{G\times\ghat}\int_{\ghat\times G} \big\vert \langle f,E_{\beta}T_{\alpha}f\rangle\langle E_{\beta\omega^{-1}}T_{\alpha x^{-1}}g,g\rangle  \big\vert \, d(\beta,\alpha) \, d(x,\omega) \\
& = \int_{G\times\ghat} \vert \mathcal{V}_{f}f(\alpha,\beta) \vert \int_{G\times\ghat} \vert \mathcal{V}_{g}g(\alpha x^{-1},\beta \omega^{-1}) \vert \, d(x,\omega) \, d(\alpha,\beta) \\
& = \Vert f \Vert_{S_{0},f} \, \Vert g \Vert_{S_{0},g}.
\end{align*}
\end{proof}


\begin{theorem} \label{thm:frame-op-is-in-s0-for-g-in-s0} Let $f_{i}\in L^2(G)$ and $g_{i}\in S_0(G)$, $i=1,2$.
Then the mapping 
\[\varphi : G\times\ghat \to \C, \ (x,\omega)\mapsto \big(\mathcal{V}_{g_{1}}f_{1}\cdot \overline{\mathcal{V}_{g_{2}}f_{2}}\big)(x,\omega)\] belongs to $S_0(G\times\ghat)$.
\end{theorem}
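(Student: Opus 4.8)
The plan is to verify the defining condition of $S_0(G\times\ghat)$ directly, closely following the computation in Proposition~\ref{prop:Vfg-in-s0}. Since $g_1,g_2\in S_0(G)\subset L^2(G)$ and $f_1,f_2\in L^2(G)$, Lemma~\ref{le:stft-norm-preserving}(i) gives $\mathcal{V}_{g_1}f_1,\mathcal{V}_{g_2}f_2\in L^2(G\times\ghat)$, so $\varphi=\mathcal{V}_{g_1}f_1\cdot\overline{\mathcal{V}_{g_2}f_2}\in L^1(G\times\ghat)$ by Cauchy--Schwarz. By the window-independence of the $S_0$-norm recorded just above Proposition~\ref{prop:Vfg-in-s0} (applied on the group $G\times\ghat$, whose dual is $\ghat\times G$), it then suffices to exhibit one window $\Psi\in S_0(G\times\ghat)$ for which $\mathcal{V}_{\Psi}\varphi\in L^1\big((G\times\ghat)\times(\ghat\times G)\big)$, and in fact I would aim for the quantitative bound $\Vert\varphi\Vert_{S_0,\Psi}\le C\,\Vert g_1\Vert_{S_0,g_1}\Vert g_2\Vert_{S_0,g_2}\Vert f_1\Vert_{L^2}\Vert f_2\Vert_{L^2}$.

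The computation itself mimics Proposition~\ref{prop:Vfg-in-s0}: writing $\langle\varphi,E_{(\beta,\alpha)}T_{(x,\omega)}\Psi\rangle=\mathcal{F}\big(\varphi\cdot\overline{T_{(x,\omega)}\Psi}\big)(\beta,\alpha)$ and expanding $\varphi$ as a product of two short-time Fourier transforms, I would use the covariance identities Lemma~\ref{le:STFT-properties-lemma}\eqref{item:2}--\eqref{item:3} to absorb the translation $T_{(x,\omega)}$, and then the key identity Lemma~\ref{le:STFT-properties-lemma}\eqref{item:4}, which computes the Fourier transform of a product of two STFTs as a product of two (window/signal--interchanged) STFTs. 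After a change of variables, this is exactly the mechanism that turned the integrand in Proposition~\ref{prop:Vfg-in-s0} into an expression factoring over the ``signal'' and ``window'' variables.

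The estimate then splits into two independent pieces, which is the heart of the matter. The factor built from the signals $f_1,f_2$ is handled by the Cauchy--Schwarz inequality together with the STFT--Plancherel identity Lemma~\ref{le:stft-norm-preserving}(i), producing $\Vert f_1\Vert_{L^2}\Vert f_2\Vert_{L^2}$; the factor built from the windows $g_1,g_2\in S_0$ is finite and controlled by $\Vert g_1\Vert_{S_0,g_1}\Vert g_2\Vert_{S_0,g_2}$, since $\mathcal{V}_{g_2}g_1\in S_0(G\times\ghat)\subset L^1$ by Proposition~\ref{prop:Vfg-in-s0}. Conceptually, this splitting is precisely a H\"older inequality for Wiener amalgam spaces: each factor $\mathcal{V}_{g_i}f_i$ lies in $W(\mathcal{F}L^1,L^2)$ (the local $\mathcal{F}L^1$-regularity coming from $g_i\in S_0$, the global $L^2$-size from $f_i\in L^2$), and $W(\mathcal{F}L^1,L^2)\cdot W(\mathcal{F}L^1,L^2)\hookrightarrow W(\mathcal{F}L^1,L^1)=S_0$ because $\mathcal{F}L^1$ is a Banach algebra under pointwise multiplication (local part), while $L^2\cdot L^2\subset L^1$ (global part).

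I expect the main obstacle to be the bookkeeping that makes this splitting rigorous on a general LCA group: justifying the interchange of the iterated integrals by Fubini \emph{before} the bound is in hand, choosing a change of variables that cleanly separates the $f$-dependence from the $g$-dependence (there is no linear or dilation structure to lean on as in $\R^n$), and recognizing the resulting ``window integral'' as the $S_0$-norm of $\mathcal{V}_{g_2}g_1$. Equivalently, in the amalgam formulation the real work lies in the two quantitative statements $\Vert\mathcal{V}_g f\Vert_{W(\mathcal{F}L^1,L^2)}\lesssim\Vert g\Vert_{S_0,g}\Vert f\Vert_{L^2}$ and the amalgam H\"older inequality over an arbitrary second countable LCA group; here too the covariance relations of Lemma~\ref{le:STFT-properties-lemma} are exactly what allow these to be established coordinate-free.
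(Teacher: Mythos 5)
Your proposal is correct and is essentially the paper's own proof: after noting $\varphi\in L^1(G\times\ghat)$, the paper fixes the single window $\Psi=\varphi_0^2$ with $\varphi_0=\mathcal{V}_{g_0}g_0\in S_0(G\times\ghat)$ (the squared-window structure being the one detail you left open --- it is what lets $\mathcal{F}\big(\varphi\cdot\overline{T_{(x,\omega)}\Psi}\big)$ split, via $\mathcal{F}(AB)=\mathcal{F}A\ast\mathcal{F}B$ and Young's inequality, into the two factors $\Vert\mathcal{F}(\mathcal{V}_{g_i}f_i\,\overline{T_{(x,\omega)}\varphi_0})\Vert_{L^1}$), and then carries out exactly your two quantitative steps: Cauchy--Schwarz in $(x,\omega)$, which is your amalgam H\"older embedding $W(\mathcal{F}L^{1},L^{2})\cdot W(\mathcal{F}L^{1},L^{2})\hookrightarrow W(\mathcal{F}L^{1},L^{1})$ (stated explicitly by the authors right after the proof), and the estimate $\int_{G\times\ghat}\Vert\mathcal{F}(\mathcal{V}_{g_1}f_1\,\overline{T_{(x,\omega)}\varphi_0})\Vert_{L^1}^2\,d(x,\omega)\le\Vert g_1\Vert_{S_0,g_0}^2\Vert g_0\Vert_{L^2}^2\Vert f_1\Vert_{L^2}^2$, proved from Lemma~\ref{le:STFT-properties-lemma}\eqref{item:2} and \eqref{item:4}, a change of variables, and Tonelli, which is precisely your claim $\Vert\mathcal{V}_g f\Vert_{W(\mathcal{F}L^1,L^2)}\lesssim\Vert g\Vert_{S_0}\Vert f\Vert_{L^2}$. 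Your final bound matches the paper's, up to the cosmetic difference that the window factors come out as $\Vert g_i\Vert_{S_0,g_0}$ for an auxiliary $g_0$ (equivalent to your $\Vert g_i\Vert_{S_0,g_i}$, and taking $g_0=g_2$ produces your cross-term $\mathcal{V}_{g_2}g_1$).
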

\begin{proof} It is clear that $\varphi\in L^{1}(G\times\ghat)$. 
Now, let $g_{0}\in S_{0}(G)$ and define
$\varphi_{0}:=\mathcal{V}_{g_{0}}g_{0}$. By
Proposition~\ref{prop:Vfg-in-s0} the function $\varphi_{0}\in
S_{0}(G\times\ghat)$, and thus $\varphi_{0}^2\in S_{0}(G\times\ghat)$. 
To finish the proof, it suffices to show that $\Vert \varphi \Vert_{S_0,\varphi_{0}^{2}} = \Vert \mathcal{V}_{\varphi_{0}^2}
\varphi \Vert_{L^{1}(G\times\ghat\times \ghat\times G)} <\infty$. We
show this in two
steps. 

 Step 1: Using the definition of the short-time Fourier transform and \eqref{eq:stft-rewrite-to-wfl1l1} we find that
\begin{align}
\nonumber & \Vert \mathcal{V}_{\varphi_{0}^2} \varphi \Vert_{L^{1}} = \int_{G\times\ghat} \int_{\ghat\times G} \vert \langle \varphi, E_{(\beta,\alpha)}T_{(x,\omega)} \varphi_{0}^2\rangle \vert \, d(\beta,\alpha) \, d(x,\omega) \\
\nonumber & = \int_{G\times\ghat} \int_{\ghat\times G} \vert \mathcal{F}\big( \mathcal{V}_{g_{1}}f_{1}\cdot \overline{\mathcal{V}_{g_{2}}f_{2}}\cdot\overline{T_{(x,\omega)} \varphi_{0}^2} \big) (\beta,\alpha) \vert \, d(\beta,\alpha) \, d(x,\omega) \\
\nonumber & = \int_{G\times\ghat} \Vert \mathcal{F} (\mathcal{V}_{g_{1}}f_{1} \, \overline{T_{(x,\omega)} \varphi_{0}}) * \mathcal{F}(\overline{\mathcal{V}_{g_{2}}f_{2} \, T_{(x,\omega)} \varphi_{0}}) \Vert_{L^{1}(\ghat\times G)} \, d(x,\omega) \\
\nonumber & \le \int_{G\times\ghat} \Vert \mathcal{F} (\mathcal{V}_{g_{1}}f_{1} \, \overline{T_{(x,\omega)} \varphi_{0}}) \Vert_{L^{1}(\ghat\times G)} \, \Vert \mathcal{F}(\overline{\mathcal{V}_{g_{2}}f_{2} \, T_{(x,\omega)} \varphi_{0}}) \Vert_{L^{1}(\ghat\times G)} \, d(x,\omega) \\
\label{eq:proof-frame-op-is-in-s0-for-g-in-s0-step1} & \le 
\Big( \int_{G\times\ghat} \Vert \mathcal{F} (\mathcal{V}_{g_{1}}f_{1} \, \overline{T_{(x,\omega)} \varphi_{0}}) \Vert_{L^{1}}^{2} \, d(x,\omega) \Big)^{1/2} \, \Big( \int_{G\times\ghat} \Vert \mathcal{F}(\overline{\mathcal{V}_{g_{2}}f_{2} \, T_{(x,\omega)} \varphi_{0}}) \Vert_{L^{1}}^{2} \, d(x,\omega) \Big)^{1/2}
\end{align}

Step 2: We now show that both factors in \eqref{eq:proof-frame-op-is-in-s0-for-g-in-s0-step1} are finite. Consider the first of the factors. By use of Lemma~\ref{le:STFT-properties-lemma} find the following.
\begin{align}
\nonumber & \int_{G\times\ghat} \Vert \mathcal{F} (\mathcal{V}_{g_{1}}f_{1} \, \overline{T_{(x,\omega)} \mathcal{V}_{g_{0}}g_{0}}) \Vert_{L^{1}}^{2} \, d(x,\omega) \\
\nonumber & \overset{\mathclap{\eqref{item:2}}} = \int_{G\times\ghat} \Big( \int_{\ghat\times G} \vert \mathcal{F}\big( \mathcal{V}_{g_{1}}f_{1} \, \overline{\mathcal{V}_{g_{0}} E_{\omega}T_{x}g_{0}}\big)(\beta,\alpha x)\vert \, d(\beta,\alpha) \Big)^2 \, d(x,\omega) 
\\
\label{eq:proof-frame-op-is-in-s0-for-g-in-s0-step2}  & \overset{\mathclap{\eqref{item:4}}}  = \int_{G\times\ghat} \Big( \int_{\ghat\times G} \vert \langle f_{1}, E_{\beta}T_{\alpha^{-1}x^{-1}}E_{\omega}T_{x}g_{0}\rangle \langle E_{\beta}T_{\alpha^{-1}x^{-1}}g_{0},g_{1}\rangle \vert \, d(\beta,\alpha) \Big)^2 \, d(x,\omega)\end{align}
By use of the short-time Fourier transform, expansion of the square term and a change of variables $\alpha\mapsto \alpha^{-1}x^{-1}$ we rewrite \eqref{eq:proof-frame-op-is-in-s0-for-g-in-s0-step2} to yield the following:
\begin{align*}
& \int_{G\times\ghat} \Vert \mathcal{F} (\mathcal{V}_{g_{1}}f_{1} \, \overline{T_{(x,\omega)} \mathcal{V}_{g_{0}}g_{0}}) \Vert_{L^{1}}^{2} \, d(x,\omega) \\
& = \iiint \vert \mathcal{V}_{g_{0}}f_{1}(x\alpha,\omega\beta)\vert \, \vert \mathcal{V}_{g_{0}}f_{1}(x\tilde\alpha,\omega\tilde\beta)\vert\,  \vert \mathcal{V}_{g_{0}}g_{1}(\alpha,\beta)\vert \,   \vert \mathcal{V}_{g_{0}}g_{1}(\tilde\alpha,\tilde\beta)\vert \, d(\alpha,\beta) \, d(\tilde\alpha,\tilde\beta) \, d(x,\omega) \\
& = \int \vert \mathcal{V}_{g_{0}}g_{1}(\alpha,\beta)\vert \int \vert \mathcal{V}_{g_{0}}g_{1}(\tilde\alpha,\tilde\beta)\vert \int \vert \mathcal{V}_{g_{0}}f_{1}(x\alpha,\omega\beta)\vert \, \vert \mathcal{V}_{g_{0}}f_{1}(x\tilde\alpha,\omega\tilde\beta)\vert \, d(x,\omega) \, d(\tilde\alpha,\tilde\beta) \, d(\alpha,\beta)\\
& \le \Vert \mathcal{V}_{g_{0}}g_{1} \Vert_{L^{1}}^2 \, \Vert \mathcal{V}_{g_{0}}f_{1} \Vert_{L^{2}}^2 = \Vert g_{1} \Vert_{S_{0},g_{0}}^2 \Vert g_{0} \Vert_{L^{2}}^{2} \, \Vert f_{1} \Vert_{L^{2}}^{2},
\end{align*}
where all integrals are over ${G \times \ghat}$.
The bound for the other term in \eqref{eq:proof-frame-op-is-in-s0-for-g-in-s0-step1} is obtained similarly. Combining step 1 and 2 yields that
\[ 
\Vert \varphi \Vert_{S_{0},\varphi_{0}^{2}} = \Vert \mathcal{V}_{\varphi_{0}^2} \varphi
\Vert_{L^{1}} \le \Vert g_{0} \Vert_{L^{2}}^2 \, \Vert g_{1} \Vert_{S_{0},g_{0}} \, \Vert g_{2}
\Vert_{S_{0},g_{0}} \, \Vert f_{1} \Vert_{L^2} \, \Vert f_{2} \Vert_{L^2} , 
\]
where $\varphi_{0}=\mathcal{V}_{g_{0}}g_{0}$, $g_{0}\in S_{0}(G)$.
\end{proof}

Step 1 in the proof of Theorem~\ref{thm:frame-op-is-in-s0-for-g-in-s0} shows that
\[ 
\Vert f\cdot g \Vert_{W(\mathcal{F}L^{1},L^{1})} \le \Vert f \Vert_{W(\mathcal{F}L^{1},L^{2})} \,
\Vert g \Vert_{W(\mathcal{F}L^{1},L^{2})}.
\]
Using the H\"older inequality rather than the Cauchy-Schwarz inequality in
\eqref{eq:proof-frame-op-is-in-s0-for-g-in-s0-step1} yields 
a H\"older inequality for Wiener amalgam spaces:
\begin{equation}
\label{eq:mod-spaces-holder-ineq} 
\Vert f\cdot g \Vert_{W(\mathcal{F}L^{1},L^{1})}
\le \Vert f \Vert_{W(\mathcal{F}L^{1},L^{p})} \, \Vert g \Vert_{W(\mathcal{F}L^{1},L^{q})}, \ \ 1
= 1/p+1/q, \ 1\le p,q\le \infty
\end{equation}
for $f \in W(\mathcal{F}L^{1},L^{p})$ and $g \in W(\mathcal{F}L^{1},L^{q})$.
In the special case of $G=\R^n$ the inequality \eqref{eq:mod-spaces-holder-ineq} plays an important
role in \cite{MR2020210,MR2264211}. On the other hand, \eqref{eq:mod-spaces-holder-ineq} holds for
more general Wiener amalgam spaces \cite{MR751019}.

From \cite[Theorem 7]{MR643206} we have the following important property of $S_0$. For any closed
subgroup $H$ of $G$ the restriction mapping
\[ R_{H}:S_{0}(G)\to S_{0}(H), \quad R_{H}f(x)=f(x), \ x\in H\]
is onto and bounded.

\begin{corollary} 
\label{cor:Bessel-system-with-g-in-s0} 
Let $\Delta$ be a closed subgroup of $G\times\ghat$.  If $g\in S_0(G)$, then there exists a constant
$K>0$ (which only depends on $\Delta$) such that
\[ 
\int_{\Delta} \vert \langle f,\pi(\nu)g\rangle \vert^2 \, d\nu \le K \, \Vert g \Vert_{S_{0},g}^2 \,
\Vert g \Vert_{L^{2}}^{2} \,  \Vert f \Vert_{L^{2}}^{2} \quad  \text{for all } f\in L^2(G).
\]
\end{corollary}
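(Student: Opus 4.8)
The plan is to view the integrand as a pointwise product of short-time Fourier transforms, use Theorem~\ref{thm:frame-op-is-in-s0-for-g-in-s0} to place this product in $S_{0}(G\times\ghat)$ together with a quantitative bound on its $S_0$-norm, and then estimate its integral over $\Delta$ by restricting to $\Delta$ and using that $S_0(\Delta)$ embeds continuously into $L^1(\Delta)$.

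First I would write $\langle f,\pi(\nu)g\rangle=\mathcal{V}_{g}f(\nu)$, so that the integrand equals $|\mathcal{V}_{g}f(\nu)|^2=\bigl(\mathcal{V}_{g}f\cdot\overline{\mathcal{V}_{g}f}\bigr)(\nu)$. Applying Theorem~\ref{thm:frame-op-is-in-s0-for-g-in-s0} with $f_1=f_2=f$ and $g_1=g_2=g$, the function $\varphi:=\mathcal{V}_{g}f\cdot\overline{\mathcal{V}_{g}f}$ lies in $S_0(G\times\ghat)$, and choosing the auxiliary window $g_0=g$ in the proof of that theorem yields the explicit bound $\Vert\varphi\Vert_{S_0,\varphi_{0}^{2}}\le\Vert g\Vert_{L^2}^2\,\Vert g\Vert_{S_0,g}^2\,\Vert f\Vert_{L^2}^2$, where $\varphi_{0}=\mathcal{V}_{g}g$ (which is in $S_0(G\times\ghat)$ by Proposition~\ref{prop:Vfg-in-s0}). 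This already reproduces the product $\Vert g\Vert_{S_0,g}^2\,\Vert g\Vert_{L^2}^2\,\Vert f\Vert_{L^2}^2$ appearing on the right-hand side of the claim, so only a $\Delta$-dependent constant remains to be produced.

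Next, since $\varphi=|\mathcal{V}_{g}f|^2\ge 0$, the quantity to be bounded is $\int_{\Delta}\varphi\,d\nu=\Vert R_{\Delta}\varphi\Vert_{L^1(\Delta)}$, where $R_{\Delta}$ denotes restriction to the closed subgroup $\Delta\subset G\times\ghat$. Here I would invoke the restriction theorem recorded in the appendix, now applied to the ambient group $G\times\ghat$, to get that $R_{\Delta}\colon S_0(G\times\ghat)\to S_0(\Delta)$ is bounded. Composing this with the continuous inclusion $S_0(\Delta)\hookrightarrow L^1(\Delta)$ gives $\int_{\Delta}\varphi\,d\nu\le C_{\Delta}\,\Vert\varphi\Vert_{S_0(G\times\ghat)}$, and feeding in the bound on $\Vert\varphi\Vert_{S_0}$ from the previous step produces the desired inequality with $K=C_{\Delta}$.

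The main obstacle is ensuring that $K$ depends on $\Delta$ alone. The cleanest route fixes, once and for all, reference windows on $G\times\ghat$ and on $\Delta$; then the operator norm of $R_{\Delta}$ and the $S_0\hookrightarrow L^1$ embedding constant depend only on $\Delta$, and their composition gives $\int_{\Delta}\varphi\le C_{\Delta}\Vert\varphi\Vert_{S_0}$ with $C_{\Delta}$ a function of $\Delta$ only. What then requires care is that Theorem~\ref{thm:frame-op-is-in-s0-for-g-in-s0} controls $\varphi$ in the $g$-dependent norm $\Vert\cdot\Vert_{S_0,\varphi_{0}^{2}}$ with $\varphi_{0}=\mathcal{V}_{g}g$, whereas the restriction estimate is most naturally phrased in a fixed reference norm; bridging the two through the equivalence of $S_0$-norms is delicate, since the equivalence constants themselves involve $g$. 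Tracking these factors, and in particular checking the homogeneity of both sides under $g\mapsto cg$, is the step I would write out most carefully, as it is what pins down the precise admissible form of the right-hand side.
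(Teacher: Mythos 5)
Your proposal follows the paper's proof step for step: you place $\varphi=\vert\mathcal{V}_{g}f\vert^{2}$ in $S_0(G\times\ghat)$ via Theorem~\ref{thm:frame-op-is-in-s0-for-g-in-s0} (with its quantitative bound for $g_0=g_1=g_2=g$ and $f_1=f_2=f$), then apply Feichtinger's restriction theorem with ambient group $G\times\ghat$, and finally use the continuous embedding $S_0(\Delta)\hookrightarrow L^1(\Delta)$ — this is exactly the argument the paper gives, so in approach your proof is correct and the same.

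One remark on the delicacy you flag in your last paragraph: your caution is warranted, and the paper's own proof does not resolve it either. The paper's displayed chain $\int_{\Delta}\varphi\,d\nu\le C\,\Vert R_{\Delta}\Vert_{op}\,\Vert\varphi\Vert_{S_0}$ silently mixes a norm on $S_0(G\times\ghat)$ in which $C$ and $\Vert R_{\Delta}\Vert_{op}$ are independent of $g$ with the $g$-dependent norm $\Vert\cdot\Vert_{S_0,\varphi_{0}^{2}}$, $\varphi_{0}=\mathcal{V}_{g}g$, that the proof of Theorem~\ref{thm:frame-op-is-in-s0-for-g-in-s0} actually controls. Your proposed homogeneity test makes the looseness concrete: under $g\mapsto cg$ the left-hand side scales as $\abs{c}^{2}$, while $\Vert cg\Vert_{S_0,cg}=\abs{c}^{2}\Vert g\Vert_{S_0,g}$ gives the right-hand side the scaling $\abs{c}^{6}$; letting $c\to 0$ shows the inequality cannot hold literally as stated with $K$ depending on $\Delta$ alone. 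A scale-consistent variant would be, for instance, $K\,\Vert g\Vert_{S_{0},g}^{2}\,\Vert g\Vert_{L^{2}}^{-2}\,\Vert f\Vert_{L^{2}}^{2}$, or a bound phrased in a fixed reference-window $S_0$-norm of $g$. None of this affects what the corollary is used for downstream (Theorem~\ref{thm:Bessel-duality} and the Janssen representation), where only the qualitative conclusion is needed: every fixed $g\in S_0(G)$ generates a Bessel system $\gaborG{g}$ for every closed subgroup $\Delta$.
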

\begin{proof} 
From Theorem~\ref{thm:frame-op-is-in-s0-for-g-in-s0} we have that the mapping 
\[\varphi : G\times\ghat \to \C, \ (x,\omega)\mapsto \vert \langle f, E_{\omega}T_{x} g\rangle\vert^2 \] belongs to $S_0(G\times\ghat)$ for any $f\in L^{2}(G)$.
By the comment preceding Corollary \ref{cor:Bessel-system-with-g-in-s0}, we have that for a closed subgroup $\Delta$ of $G\times\ghat$, the mapping $ \nu \mapsto \vert\langle f, \pi(\nu) g\rangle \vert^2$
also belongs to $S_0(\Delta)$. Hence, it belongs, in particular, to $L^{1}(\Delta)$. Therefore 
\[ \int_{\Delta} \vert \langle f, \pi(\nu) g\rangle \vert^2 \, d\nu = \Bigl\Vert \vert \langle f, \pi( \, \cdot \, ) g\rangle\vert^2 \Bigr\Vert_{L^{1}(\Delta)} \le C \, \Vert R_{\Delta} \varphi \Vert_{S_0} \le C \, \Vert R_{\Delta} \Vert_{op} \, \Vert \varphi \Vert_{S_0} .\]
The result now follows by the proof of Theorem~\ref{thm:frame-op-is-in-s0-for-g-in-s0}.
\end{proof}

Corollary~\ref{cor:Bessel-system-with-g-in-s0} shows that the frame operator $S_{g}$ with $g\in
S_{0}(G)$ is well-defined and bounded on $L^2(G)$. However, it also shows, and this is what we used
in Section~\ref{thm:Bessel-duality}, that the operator $S_{g}$,
$g\in L^2(G)$, is well-defined when the domain is restricted to the subspace $S_{0}(G)$ of $L^2(G)$.
We refer to~\cite{MR1601107} for further results of this nature for $G=\R^n$.



\def\cprime{$'$} \def\cprime{$'$} \def\cprime{$'$}
  \def\uarc#1{\ifmmode{\lineskiplimit=0pt\oalign{$#1$\crcr
  \hidewidth\setbox0=\hbox{\lower1ex\hbox{{\rm\char"15}}}\dp0=0pt
  \box0\hidewidth}}\else{\lineskiplimit=0pt\oalign{#1\crcr
  \hidewidth\setbox0=\hbox{\lower1ex\hbox{{\rm\char"15}}}\dp0=0pt
  \box0\hidewidth}}\relax\fi} \def\cprime{$'$} \def\cprime{$'$}
  \def\cprime{$'$} \def\cprime{$'$} \def\cprime{$'$} \def\cprime{$'$}


\end{document}